\newtheorem{theorem}{Theorem}[section]
\newtheorem{lemma}[theorem]{Lemma}
\newtheorem{proposition}[theorem]{Proposition}
\newtheorem{corollary}[theorem]{Corollary}
\numberwithin{equation}{section}
\newcommand\RR{{{\mathbb R}}}
\newcommand\SSS{{\mathbb S}}
\newcommand{\rr}{\mathbb{R}}
\newcommand{\eps}{\varepsilon}
\newcommand{\nn}{\mathbb{N}}
\newcommand{\cc}{\mathbb{C}}
\def\un{{\mathrm{1~\hspace{-1.4ex}l}}}
\def\triple{|\hspace{-0.4mm}|\hspace{-0.4mm}|}
\def\p{\partial}
\def\io{{\infty}}
\def\re{\operatorname{Re}}
\def\im{\operatorname{Im}}
\def\N{\mathbb N}
\def\R{\mathbb R}
\def\C{\mathbb C}
\def\poscal#1#2{\langle#1,#2\rangle}
\def\norm#1{\Vert#1\Vert}
\def\val#1{\vert#1\vert}
\def\valjp#1{\langle#1\rangle}
\def\l2{L^2(\R^{n})}
\def\L2{L^2(\R^{2n})}
\def\hs{{\hskip15pt}}
\def\vs{\vskip.3cm}
\let \dis=\displaystyle
\let \dis=\displaystyle
\def\mat22#1#2#3#4{\begin{pmatrix}#1&#2\\ #3&#4\end{pmatrix}}
\def\wt#1{\widetilde{#1}}
\def\finp{\operatorname{fp}}
\begin{document}

\title[Analysis of the non-cutoff Kac collision operator]
{Spectral and phase space analysis of the linearized non-cutoff Kac collision operator}
\author{N. Lerner, Y. Morimoto, K. Pravda-Starov  \& C.-J. Xu\\{}}
\date{\today}
\address{\noindent \textsc{N. Lerner, Institut de Math\'ematiques de Jussieu,
Universit\'e Pierre et Marie Curie (Paris VI),
4 Place Jussieu,
75252 Paris cedex 05,
France}}
\email{lerner@math.jussieu.fr}
\address{\noindent \textsc{Y. Morimoto, Graduate School of Human and Environmental Studies,
Kyoto University, Kyoto 606-8501, Japan}}
\email{morimoto@math.h.kyoto-u.ac.jp }
\address{\noindent \textsc{K. Pravda-Starov,
Universit\'e de Cergy-Pontoise,
CNRS UMR 8088,
D\'epartement de Math\'ematiques,
95000 Cergy-Pontoise, France}}
\email{karel.pravda-starov@u-cergy.fr}
\address{\noindent \textsc{C.-J. Xu, School of Mathematics, Wuhan university 430072, Wuhan, P.R. China\\
 and  \\
 Universit\'e de Rouen, CNRS UMR 6085, D\'epartement de Math\'ematiques, 76801 Saint-Etienne du Rouvray, France}}
\email{Chao-Jiang.Xu@univ-rouen.fr}
\keywords{Non-cutoff Kac equation, Non-cutoff Boltzmann equation, Spectral analysis, Microlocal analysis, Harmonic oscillator}
\subjclass[2000]{35Q20, 35S05, 76P05, 82B40, 35R11.}
\begin{abstract}
The non-cutoff Kac operator is a kinetic model for the non-cutoff radially symmetric Boltzmann operator. For Maxwellian molecules, the linearization of the non-cutoff Kac operator around a Maxwellian distribution is shown to be a function of the harmonic oscillator, to be diagonal in the Hermite basis and to be essentially a fractional power of the harmonic oscillator.  This linearized operator is a pseudodifferential operator, and we provide
 a complete asymptotic expansion for its symbol in a class enjoying a nice symbolic calculus.
Related results for the linearized non-cutoff radially symmetric Boltzmann operator are also proven.
\end{abstract}

\maketitle

\section{Introduction}
\subsection{The Boltzmann equation} 
The Boltzmann equation describes the behaviour of a dilute gas when the only interactions taken into account are binary collisions~\cite{17}. It reads as the equation
\begin{equation}\label{e1}
\begin{cases}
\partial_tf+v\cdot\nabla_{x}f=Q(f,f),\\
f|_{t=0}=f_0,
\end{cases}
\end{equation}
for the density distribution of the particles $f=f(t,x,v) \geq 0$  at time $t$, having position $x \in \rr^d$ and velocity $v \in \rr^d$. The Boltzmann equation derived in 1872 is one of the fundamental equations in mathematical physics and, in particular, a cornerstone of statistical physics.

The term appearing in the right-hand-side of this equation $Q(f,f)$ is the so-called Boltzmann collision operator associated to the Boltzmann bilinear operator
\begin{equation}\label{eq1}
Q(g, f)=\int_{\rr^d}\int_{\SSS^{d-1}}B(v-v_{*},\sigma) \big(g'_* f'-g_*f\big)d\sigma dv_*,
\end{equation}
with $d \geq 2$,
where we are using the standard shorthand $f'_*=f(t,x,v'_*)$, $f'=f(t,x,v')$, $f_*=f(t,x,v_*)$, $f=f(t,x,v)$. In this expression, $v, v_*$ and $v',v_*'$ are the velocities in $\rr^d$ of a pair of particles respectively before and after the collision. They are connected through the formulas
$$v'=\frac{v+v_*}{2}+\frac{|v-v_*|}{2}\sigma,\quad   v_*'=\frac{v+v_*}{2}-\frac{|v-v_*|}{2}\sigma,$$
where the parameter $\sigma\in\SSS^{d-1}$ belongs to the unit sphere. Those relations correspond physically to elastic collisions with the conservations of momentum and kinetic energy in the binary collisions 
$$\quad v+v_{\ast}=v'+v_{\ast}', \quad |v|^2+|v_{\ast}|^2=|v'|^2+|v_{\ast}'|^2,$$
where $|\cdot|$ is the Euclidean norm on $\rr^d$.

For monatomic gas, the cross section $B(v-v_*,\sigma)$ is a non-negative function which only depends on the relative velocity $|v-v_*|$ and on the deviation angle $\theta$ defined through the scalar product in $\rr^d$,
$$\cos \theta=k \cdot \sigma, \quad k=\frac{v-v_*}{|v-v_*|}.$$
Without loss of generality, we may assume that $B(v-v_*,\sigma)$ is supported on the set where
$$k \cdot \sigma \geq 0,$$
i.e. where $0 \leq \theta \leq \frac{\pi}{2}$. Otherwise, we can reduce to this situation with the customary symmetrization
$$\tilde{B}(v-v_{*},\sigma)=\big[B(v-v_{*},\sigma)+B(v-v_{*},-\sigma)\big] \un_{\{\sigma \cdot k \geq 0\}},$$
with $\un_A$ being the characteristic function of the set $A$, since the term $f'f_*'$ appearing in the Boltzmann operator $Q(f,f)$ is invariant under the mapping $\sigma \rightarrow -\sigma$.
More specifically, we consider cross sections of  the type
\begin{equation}\label{eq1.01}
B(v-v_*,\sigma)=\Phi(|v-v_*|)b\Big(\frac{v-v_*}{|v-v_*|} \cdot \sigma\Big), 
\end{equation}
with a kinetic factor
\begin{equation}\label{sa0}
\Phi(|v-v_*|)=|v-v_*|^{\gamma}, \quad  \gamma \in ]-d,+\infty[,
\end{equation}
and a factor related to the deviation angle with a singularity
\begin{equation}\label{sa1}
(\sin \theta)^{d-2}b(\cos \theta)  \substack{\\ \\ \approx \\ \theta \to 0_{+} }  \theta^{-1-2s},
\end{equation}
for\footnote{The notation $a\approx b$ means $a/b$
is bounded from above and below by fixed positive constants.} some  $0 < s <1$. Notice that this singularity is not integrable
$$\int_0^{\frac{\pi}{2}}(\sin \theta)^{d-2}b(\cos \theta)d\theta=+\infty.$$
This non-integrability plays a major r\^ole regarding the qualitative behaviour of the solutions of the Boltzmann equation and this feature is essential for the smoothing effect to be present. Indeed, as first observed by Desvillettes for the Kac equation~\cite{D95}, grazing collisions that account for the non-integrability of the angular factor near $\theta=0$
do induce smoothing effects for the solutions of the non-cutoff Kac equation, or more generally for the solutions of the non-cutoff Boltzmann equation. On the other hand, these solutions are at most as regular as the initial data, see e.g. \cite{36}, when the cross section is assumed to be integrable, or after removing the singularity by using a cutoff function (Grad's angular cutoff assumption).

The physical motivation for considering this specific structure of cross sections is derived from particles interacting according to a spherical intermolecular repulsive potential of the form 
$$\phi(\rho)=\frac{1}{\rho^{r}}, \quad r>1,$$ 
with $\rho$ being the distance between two interacting particles. In the physical 3-dimensional space~$\rr^3$, the cross section satisfies the above assumptions with 
$$s=\frac{1}{r} \in ]0, 1[, \quad \gamma=1-4s \in ]-3, 1[.$$
For further details on the physics background and the derivation of the Boltzmann equation, we refer the reader to the extensive expositions \cite{17,villani2}.

In the present work, we study the non-cutoff Kac collision operator. The Kac operator is a one-dimensional collision model for the radially symmetric Boltzmann operator defined as
\begin{equation}\label{holl2}
K(g,f)=\int_{\val \theta\le \frac{\pi}{4}}\beta(\theta)\left(\int_{\RR} (g'_* f'-g_*f )dv_*\right)d\theta,
\end{equation}
with $f_*'=f(t,x,v_*')$, $f'=f(t,x,v')$, $f_*=f(t,x,v_*)$, $f=f(t,x,v)$, where the relations between pre and post collisional velocities given by
\begin{equation}\label{prepostk}
v'=v\cos\theta - v_*\sin\theta, \quad v'_* =v\sin\theta +v_*\cos\theta, \quad v,v_* \in \rr,
\end{equation}
follow from the conservation of the kinetic energy in the binary collisions
$$v^2+v_{\ast}^2=v'^2+v_{\ast}'^2$$ 
and where the cross section is an even non-negative function satisfying  
\begin{equation}\label{holl3}
\beta \geq 0, \quad \beta\in L^1_{\textrm{loc}}(0,1), \quad \beta(-\theta)=\beta(\theta).
\end{equation}
As for the Boltzmann operator, the main assumption concerning the cross-section is the presence of a non-integrable singularity for grazing collisions
\begin{equation}\label{ah1}
\beta(\theta) \ \substack{ \\ \dis\approx \\ \theta \to 0} |\theta|^{-1-2s},
\end{equation}
with $0<s<1$. Details about the definition of the Kac operator as a finite part integral are recalled in Section~\ref{kacsection}. In particular, when acting on functions depending only on the velocity variable, the function $K(g,f)$ is shown to belong to the Schwartz space $\mathscr S(\R_v)$ when $g,f \in \mathscr{S}(\rr_v)$ (Lemma \ref{6.lem.defkac}). As pointed out in~\cite{D95}, the non-integrability feature (\ref{ah1}) accounts for the diffusive properties of the non-cutoff Kac equation. We aim in this work at displaying the exact diffusive structure of the non-cutoff Kac operator. More specifically, we shall be concerned with a close-to-equilibrium framework and provide a complete spectral and microlocal analysis of the linearization of the non-cutoff Kac operator around a normalized Maxwellian distribution.

\subsection{The linearized Boltzmann operator}
We begin by recalling some properties of the linearized Boltzmann operator. We consider the linearization of the Boltzmann equation 
$$f=\mu+\sqrt{\mu}g,$$
around the Maxwellian equilibrium distribution 
\begin{equation}\label{maxwe}
\mu(v)=(2\pi)^{-\frac{d}{2}}e^{-\frac{|v|^2}{2}}.
\end{equation}
Since $Q(\mu,\mu)=0$ by the conservation of the kinetic energy, the Boltzmann operator $Q(f,f)$ can be split into three terms
$$Q(\mu+\sqrt{\mu}g,\mu+\sqrt{\mu}g)=Q(\mu,\sqrt{\mu}g)+Q(\sqrt{\mu}g,\mu)+Q(\sqrt{\mu}g,\sqrt{\mu}g),$$
whose linearized part is
$Q(\mu,\sqrt{\mu}g)+Q(\sqrt{\mu}g,\mu).$
Setting
\begin{equation}\label{jan1}
\mathscr{L}g=\mathscr{L}_{1}g+\mathscr{L}_{2}g,
\end{equation}
with
\begin{equation}\label{jan2}
\mathscr{L}_{1}g=-\mu^{-1/2}Q(\mu,\mu^{1/2}g), \quad \mathscr{L}_{2}g=-\mu^{-1/2}Q(\mu^{1/2}g,\mu),
\end{equation}
the original Boltzmann equation (\ref{e1}) is reduced to the Cauchy problem for the fluctuation
\begin{equation}\label{boltz}
\begin{cases}
\partial_tg+v\cdot\nabla_{x}g+\mathscr{L}g=\mu^{-1/2}Q(\sqrt{\mu}g,\sqrt{\mu}g),\\
g|_{t=0}=g_0.
\end{cases}
\end{equation}
The Boltzmann operator is local in the time and position variables and from now on, we consider it as acting only in the velocity variable.
This linearized operator is known \cite{17} to be an unbounded symmetric operator on $L^2(\rr^d_{v})$ (acting in the velocity variable) such that its Dirichlet form satisfies
$$(\mathscr{L}g,g)_{L^2(\rr^d_{v})} \geq 0.$$
Setting
$$\mathbf{P}g=(a+b \cdot v+c|v|^2)\mu^{1/2},$$
with $a,c \in \rr$, $b \in \rr^d$, the $L^2$-orthogonal projection onto the space of collisional invariants
\begin{equation}\label{coli}
\mathcal{N}=\textrm{Span}\big\{\mu^{1/2},v_1 \mu^{1/2},...,v_d\mu^{1/2},|v|^2\mu^{1/2}\big\},
\end{equation}
we have
\begin{equation}\label{ker}
(\mathscr{L}g,g)_{L^2(\rr^d)}=0 \Leftrightarrow g=\mathbf{P}g.
\end{equation}
For Maxwellian molecules, i.e. when $\gamma=0$ in the kinetic factor (\ref{sa0}), the spectrum of the linearized Boltzmann operator is only composed by eigenvalues explicitly computed in \cite{wang}. See also \cite{bobylev,17,dolera}. Cercignani \cite{cercignani} about forty years ago noticed that the linearized Boltzmann operator with Maxwellian molecules behaves like a fractional diffusive operator. Over the time, this point of view transformed into the following widespread heuristic conjecture on the diffusive behavior of the Boltzmann operator as a flat fractional Laplacian \cite{al-1,amuxy-2,villani2}: 
$$f \mapsto Q(\mu,f) \sim -(-\Delta_v)^sf+ \textrm{ lower order terms},$$
with $0<s<1$ being the parameter appearing in the singularity assumption (\ref{sa1}). See \cite{lmp,MoXu,M-X2} for works related to this simplified model of the non-cutoff Boltzmann equation. Regarding the linearized non-cutoff Boltzmann operator for general molecules, sharp coercive estimates in the weighted isotropic Sobolev spaces $H^k_l(\rr^d)$ were proven in \cite{amuxy3,amuxy-4-1,gr-st,44,strain}:
\begin{equation}\label{sa2b}
 \left\|
(1 - {\bf P} ) g\right\|^2_{H^s_{\frac{\gamma}{2}}}+\left\|
(1 - {\bf P} ) g\right\|^2_{L^2_{s+\frac{\gamma}{2}}}
\lesssim (\mathscr L g,\, g)_{L^2(\rr^d)} \lesssim \left\|
(1 - {\bf P} )g\right\|^2_{H^s_{s+\frac{\gamma}{2}}},
\end{equation}
where
$$H^k_l(\rr^d) = \big\{ f\in \mathscr S ' (\RR^d ) : \ (1+|v|^2)^{\frac{l}{2}} f \in H^k (\RR^d ) \big\}, \quad k, l \in \RR.$$
As a byproduct of our analysis of the linearized non-cutoff Kac operator, we investigate in this work this heuristic conjecture in the particular case of the linearized non-cutoff Boltzmann operator with Maxwellian molecules acting on radially symmetric functions with respect to the velocity variable. This linearized non-cutoff radially symmetric Boltzmann operator will be shown to be a function of the harmonic oscillator  
\begin{equation}\label{harmo}
\mathcal{H}=-\Delta_v+\frac{|v|^2}{4}
\end{equation}
and to be equal to the fractional harmonic oscillator 
$$\Big(1-\Delta_v+\frac{|v|^2}{4}\Big)^s,$$ 
up to some lower order terms, where $0<s<1$ is the parameter appearing in the singularity assumption \eqref{sa1}. We shall also display the exact phase space structure of this operator which will be shown to be a pseudodifferential operator
$$\mathscr{L}f=l^w(v,D_v)f,$$
when acting on radially symmetric Schwartz functions $f  \in \mathscr{S}_r(\rr_v^d)$, whose symbol admits a complete asymptotic expansion 
\begin{equation}\label{asymp}
l(v,\xi) \sim 
c_0\Big(1+\vert\xi|^2+\frac{|v|^2}{4}\Big)^{s}-d_{0}
+\sum_{k=1}^{+\infty}c_k\Big(1+\vert\xi|^2+\frac{|v|^2}{4}\Big)^{s-k}, 
\end{equation}
with $c_0,d_0>0$,  $c_k \in \rr$ when $k\geq 1$.
This asymptotic expansion provides a complete description of the phase space structure of the linearized non-cutoff radially symmetric Boltzmann operator and allows to strengthen in the radially symmetric case with Maxwellian molecules the coercive estimate (\ref{sa2b}) as
\begin{equation}\label{holl1}
\|\mathcal{H}^{\frac{s}{2}}(1-{\bf P})f\|_{L^2}^2  \lesssim (\mathscr{L}f,f)_{L^2} \lesssim \|\mathcal{H}^{\frac{s}{2}}(1-{\bf P})f\|_{L^2}^2 , \quad f \in \mathscr{S}_{r}(\rr^d),
\end{equation}
where $\mathcal H$ is the harmonic oscillator. However, let us mention that the general (non radially symmetric) Boltzmann operator is a truly anisotropic operator. This accounts in general for the difference between the lower and upper bounds in the sharp estimate (\ref{sa2b}). In the recent works \cite{amuxy-4-1,gr-st,gr-st1}, sharp coercive estimates for the general linearized non-cutoff Boltzmann operator were proven. In \cite{amuxy-4-1}, these sharp coercive estimates established in the three-dimensional setting $d=3$ (Theorem~1.1 in \cite{amuxy-4-1}),
\begin{equation}\label{tripleest}
\triple (1-{\bf P})f\triple_{\gamma}^2  \lesssim (\mathscr{L}f,f)_{L^2} \lesssim  \triple (1-{\bf P})f\triple_{\gamma}^2, \quad f \in \mathscr{S}(\rr^3),
\end{equation}
involve the anisotropic norm
\begin{equation}\label{tripleest1}
\triple f \triple_{\gamma}^2=\int_{\rr^3_{v} \times \rr^3_{v_*} \times \SSS^2_{\sigma}}|v-v_*|^{\gamma}b(\cos \theta)\big(\mu_*(f'-f)^2+f_*^2(\sqrt{\mu'}-\sqrt{\mu})^2\big)dvdv_*d\sigma,
\end{equation}
whereas in \cite{gr-st,gr-st1}, coercive estimates involving the anisotropic norms
$$\|f\|_{N^{s,\gamma}}^2=\|f\|_{L_{\gamma+2s}^2}^2+\int_{\rr^{d}}\int_{\rr^{d}}\langle v \rangle^{\frac{\gamma+2s+1}{2}} \langle v' \rangle^{\frac{\gamma+2s+1}{2}}\frac{|f(v)-f(v')|^2}{d(v,v')^{d+2s}}\un_{d(v,v') \leq 1}dv dv',$$
where
$$d(v,v')=\sqrt{|v-v'|^2+\frac{1}{4}(|v|^2-|v'|^2)^2},$$
were derived and a model of a fractional geometric Laplacian with the geometry of a lifted paraboloid in $\rr^{d+1}$ was suggested for interpreting the anisotropic diffusive properties of the Boltzmann collision operator.

\section{Main results}

\subsection{Main results for the linearized non-cutoff Kac operator}\label{main}
We consider the non-cutoff Kac collision operator (\ref{holl2}) whose cross section satisfies to the assumptions (\ref{holl3}) and (\ref{ah1}). As before for the Boltzmann equation, we consider the fluctuation around the normalized Maxwellian distribution
$$\mu(v)=(2\pi)^{-\frac{1}{2}}e^{-\frac{v^2}{2}}, \quad v \in \rr,$$
by setting
$$f=\mu+\sqrt{\mu}h.$$
Since $K(\mu,\mu)=0$ by conservation of the kinetic energy,
we may write
$$
K(\mu+\sqrt{\mu}h,\mu+\sqrt{\mu}h)=K(\mu,\sqrt{\mu}h)
+K(\sqrt{\mu}h,\mu)+K(\sqrt{\mu}h,\sqrt{\mu}h)$$
and consider the linearized Kac operator
\begin{equation}\label{linear-K}
\mathcal{K}h=\mathcal{K}_1h+\mathcal{K}_2h,
\end{equation}
with
\begin{equation}\label{linear-K1}
\mathcal{K}_1h=-\mu^{-1/2}K(\mu,\mu^{1/2}h), \quad \mathcal{K}_2h=-\mu^{-1/2}K(\mu^{1/2}h,\mu).
\end{equation}
The first result gives an operator-theoretical formula expressing the first part of the linearized non-cutoff Kac operator as a function of the contraction semigroup generated by the one-dimensional harmonic oscillator 
\begin{equation}\label{harmone}
\mathcal{H}=-\Delta_v+\frac{v^2}{4}.
\end{equation}
We refer the reader to section \ref{6.sec.harmo} for a reminder on classical notations and formulas for the harmonic oscillator and the Hermite functions.

\bigskip

\begin{theorem}\label{th1.1} 
The first part of the linearized non-cutoff Kac operator defined by 
$$\mathcal{K}_1f=-\mu^{-1/2}K(\mu,\mu^{1/2}f),$$ 
is equal to
$$\mathcal K_{1}=\int^{\frac{\pi}{4}}_{-\frac{\pi}{4}}\beta(\theta)\Big(1-(\sec\theta)^{\frac{1}{2}}{\exp\bigl(-\mathcal{H}\ln(\sec\theta) \bigr)}\Big)d\theta,$$
where $\mathcal{H}$ is the one-dimensional harmonic oscillator (\ref{harmone}) so that 
\begin{equation}\label{new002}
\mathcal{K}_1=\sum_{k\ge 1}\Big(\int^{\frac{\pi}{4}}_{-\frac{\pi}{4}}\beta(\theta)\bigl(1-(\cos \theta)^k\bigr)d\theta\Big) \mathbb P_{k},
\end{equation}
where the projections $\mathbb P_{k}$ onto the Hermite basis are described in Section \ref{6.sec.harmo}.
\end{theorem}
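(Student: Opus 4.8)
The plan is to compute the integral kernel of the gain part of $\mathcal K_1$ by an explicit change of variables and then to recognize it as the Mehler kernel of $e^{-\tau\mathcal H}$ for the specific time $\tau=\ln(\sec\theta)$.

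First I would isolate the structure $\mathcal K_1 f=\int_{-\pi/4}^{\pi/4}\beta(\theta)\big((1-A_\theta)f\big)\,d\theta$, where the loss term of $-\mu^{-1/2}K(\mu,\mu^{1/2}\cdot)$ reduces to $f\mapsto f\int\beta(\theta)\,d\theta$ (using $\int_\RR\mu\,dv_*=1$) while the gain term reduces to $f\mapsto-\int\beta(\theta)(A_\theta f)\,d\theta$ with $A_\theta f(v)=\mu(v)^{-1/2}\int_\RR\mu(v_*')\,\mu(v')^{1/2}f(v')\,dv_*$; both pieces are read in the finite-part sense of Section~\ref{kacsection} and Lemma~\ref{6.lem.defkac}, the divergence of $\int\beta$ being harmless once one checks at the end that it multiplies a quantity vanishing to order $\theta^2$. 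By evenness of $\beta$, $\cos\theta$, $\sec\theta$ one may assume $\theta>0$. Fixing $v$ and performing the change of variables $v_*\mapsto v'=v\cos\theta-v_*\sin\theta$ (so $v_*'=(v-v'\cos\theta)/\sin\theta$ and $dv_*=dv'/\sin\theta$), a short Gaussian computation yields $A_\theta f(v)=\int_\RR K_\theta(v,v')f(v')\,dv'$ with
$$K_\theta(v,v')=\frac{(2\pi)^{-1/2}}{\sin\theta}\exp\!\Big(\tfrac{v^2}{4}-\tfrac{v'^2}{4}-\frac{(v-v'\cos\theta)^2}{2\sin^2\theta}\Big).$$

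Next I would bring in the Mehler formula recalled in Section~\ref{6.sec.harmo}: for $\tau>0$ the operator $e^{-\tau\mathcal H}$, where $\mathcal H=-\partial_v^2+v^2/4$ acts on the Hermite functions with eigenvalues $k+\tfrac12$, has kernel
$$e^{-\tau\mathcal H}(v,v')=\frac{1}{2\sqrt{\pi\sinh\tau}}\exp\!\Big(-\frac{(v^2+v'^2)\cosh\tau-2vv'}{4\sinh\tau}\Big).$$
The core of the argument is then the identity $K_\theta=(\sec\theta)^{1/2}\,e^{-\tau\mathcal H}$ with $\tau=\ln(\sec\theta)$. This is checked using $\cosh\tau=\tfrac{1+\cos^2\theta}{2\cos\theta}$ and $\sinh\tau=\tfrac{\sin^2\theta}{2\cos\theta}$: the prefactor $(\cos\theta)^{-1/2}/(2\sqrt{\pi\sinh\tau})$ collapses to $(2\pi)^{-1/2}/\sin\theta$, and expanding both exponents over the common denominator $4\sin^2\theta$ shows that the coefficients of $v^2$, of $v'^2$, and of $vv'$ match (the first two each equal $-\tfrac{1+\cos^2\theta}{4\sin^2\theta}$, the last $\tfrac{\cos\theta}{\sin^2\theta}$). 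I expect this bookkeeping of the quadratic form, together with the matching of the two Gaussian normalizations, to be the only genuine computational obstacle; everything else is structural. (One should also note $\|A_\theta\|_{L^2\to L^2}=(\sec\theta)^{1/2}e^{-\frac12\ln\sec\theta}=1$, so the operator-valued $\theta$-integral is meaningful on, say, the Schwartz space or the span of the Hermite functions.)

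Finally, once $A_\theta=(\sec\theta)^{1/2}\exp(-\mathcal H\ln\sec\theta)$ is established, the first displayed formula of the theorem is immediate. Diagonalizing on the Hermite basis, $A_\theta\mathbb P_k=(\sec\theta)^{1/2}(\sec\theta)^{-(k+1/2)}\mathbb P_k=(\cos\theta)^k\mathbb P_k$, whence $\mathcal K_1=\sum_{k\ge0}\big(\int_{-\pi/4}^{\pi/4}\beta(\theta)(1-(\cos\theta)^k)\,d\theta\big)\mathbb P_k$. The $k=0$ term vanishes identically, consistent with $\mu^{1/2}$ spanning the range of $\mathbb P_0$ and being a collisional invariant, which is also what retroactively legitimizes the finite-part manipulations; for $k\ge1$ one has $1-(\cos\theta)^k\sim\tfrac{k}{2}\theta^2$ near $0$, so $\beta(\theta)(1-(\cos\theta)^k)\approx|\theta|^{1-2s}$ is integrable there since $0<s<1$, and since $\beta\in L^1_{\mathrm{loc}}(0,\pi/4)$ every coefficient is finite, giving (\ref{new002}).
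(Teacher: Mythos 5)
Your proof is correct: the kernel of the gain operator $A_\theta$ that you obtain from the change of variables $v_*\mapsto v'$ matches the paper's computation, and your identification of it with $(\sec\theta)^{\frac12}$ times the Mehler kernel of $e^{-\tau\mathcal H}$ at $\tau=\ln(\sec\theta)$ checks out (the values $\cosh\tau=\frac{1+\cos^{2}\theta}{2\cos\theta}$, $\sinh\tau=\frac{\sin^{2}\theta}{2\cos\theta}$ and the quadratic-form coefficients are all right), as is the handling of the singular $\theta$-integral via Lemma~\ref{new003} and the vanishing of the $k=0$ coefficient. However, your route is genuinely different from the paper's. You work entirely in position space: direct Gaussian computation of the Schwartz kernel of $A_\theta$, then matching against the classical Mehler kernel
$$e^{-\tau\mathcal H}(v,v')=\frac{1}{2\sqrt{\pi\sinh\tau}}\exp\Bigl(-\frac{(v^2+v'^2)\cosh\tau-2vv'}{4\sinh\tau}\Bigr),$$
which diagonalizes $A_\theta$ on the Hermite basis at once. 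The paper instead passes through the Bobylev formula (Lemma~\ref{prop2}) to write $\mathcal K_{1}$ on the Fourier side, computes the distribution kernel of $\mathcal K_{1,\theta}$, converts it into a Weyl symbol via \eqref{4.lk555}, obtaining $l_{1,\theta}(v,\xi)=1-\sec^{2}(\frac\theta2)\exp\bigl(-2\tan^{2}(\frac\theta2)(\xi^{2}+\frac{v^{2}}{4})\bigr)$, and then invokes the Weyl-quantized form of the Mehler formula \eqref{6.444mm} to recognize this symbol as $(\sec\theta)^{\frac12}\exp(-\mathcal H\ln\sec\theta)$. For Theorem~\ref{th1.1} alone your argument is shorter and more elementary, avoiding both the Fourier transform and the Weyl quantization; what the paper's detour buys is the explicit Weyl symbol $l_{1}$ of \eqref{514jh}, which is precisely the object needed later (Lemma~\ref{l1.1cutoff'}, Theorem~\ref{th1}) for the symbolic asymptotic expansion, and which your kernel-only computation does not directly produce.
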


\bigskip

\noindent 
Let us underline that in the integrals appearing in the formula (\ref{new002}) the $L^1$ singularity at 0 of the function $\beta$ is erased by the factor $(1-(\cos \theta)^k)$ which vanishes at the second order. The integrals in Theorem~\ref{th1.1} are therefore to be understood in the sense of Lemma \ref{new003}. This first result shows that $\mathcal{K}_1$ is an unbounded nonnegative operator on $L^2(\rr)$ which is diagonal in the Hermite basis. Furthermore, the more precise calculation \eqref{6.asyvp} shows that the domain of the operator $\mathcal K_{1}$ can be taken as
\begin{equation}\label{new005}
\mathcal D=\Big\{u\in L^2({\rr}),\quad \sum_{k\ge 0} k^{2s}\Vert{\mathbb P_{k}u}\Vert_{L^2}^2< +\infty\Big\}=\{u\in L^2({\rr}),\ \mathcal H^s u\in L^2(\rr)\}.
\end{equation}
The next theorem provides an operator-theoretical formula expressing the second part of the linearized non-cutoff Kac operator as a function of the spectral projections of the one-dimensional harmonic oscillator:

\bigskip

\begin{theorem}\label{th1.111}
The second part of the linearized non-cutoff Kac operator defined by 
$$\mathcal{K}_2f=-\mu^{-1/2}K(\mu^{1/2}f,\mu),$$
is equal to
$$\mathcal{K}_2=-\sum_{l=1}^{+\infty}\Big(\int^{\frac{\pi}{4}}_{-\frac{\pi}{4}}\beta(\theta)(\sin \theta)^{2l}d\theta\Big)\mathbb P_{2l}.$$
Furthermore, there exist some positive constants $c_1, c_2>0$ such that
\begin{equation}\label{new006}
0\le -\mathcal K_{2}\le c_1\exp-c_2 \mathcal H,
\end{equation}
where $\mathcal{H}$ is the one-dimensional harmonic oscillator (\ref{harmone}) and $\mathbb P_{k}$ are the spectral projections onto the Hermite basis described in Section \ref{6.sec.harmo}.
\end{theorem}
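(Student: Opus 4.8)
The plan is to establish the spectral formula for $\mathcal{K}_2$ by testing it against the Hermite basis, exactly as for $\mathcal{K}_1$ in Theorem~\ref{th1.1}, and then to read off the two-sided bound (\ref{new006}) from the resulting diagonalization. Since $\mu^{1/2}=\psi_0$, the $L^2$-normalized ground state of $\mathcal{H}$, and $\mu=\psi_0^2$, one has $\mathcal{K}_2\psi_k=-\psi_0^{-1}K(\psi_0\psi_k,\psi_0^2)$ for every Hermite function $\psi_k$, and it suffices to compute this for all $k\ge 0$. Following the approach of Theorem~\ref{th1.1}, I would encode the whole family at once through the generating function $F_z=\sum_{k\ge 0}\frac{z^{k}}{\sqrt{k!}}\psi_k$, which is (a multiple of) the Gaussian $e^{-v^2/4+vz-z^2/2}$, so that $\psi_0F_z$ is again a Gaussian.

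\emph{The Gaussian computation.} In the gain term $\int_{\rr}(\psi_0F_z)(v_*')\,\psi_0(v')^2\,dv_*$ I would insert the rotation relations (\ref{prepostk}) and use the conservation of kinetic energy $v'^2+v_*'^2=v^2+v_*^2$; the exponent then becomes a quadratic form in $(v,v_*)$ and the $v_*$-integral is an elementary Gaussian integral, which collapses to $\psi_0(v)\,F_{z\sin\theta}(v)$. The loss term $\int_{\rr}(\psi_0F_z)(v_*)\,\psi_0(v)^2\,dv_*$ equals $\psi_0(v)^2=\mu(v)$ because $\int_{\rr}(\psi_0F_z)(v_*)\,dv_*=\langle\psi_0,F_z\rangle_{L^2}=1$ by orthonormality. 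Subtracting, dividing by $\psi_0$ and expanding $F_{z\sin\theta}$ in the Hermite basis (the $z^{0}$-terms cancel), I obtain, after integration in $\theta$, that $\psi_0^{-1}K(\psi_0\psi_k,\psi_0^2)=\big(\int_{-\pi/4}^{\pi/4}\beta(\theta)(\sin\theta)^{k}\,d\theta\big)\psi_k$ for $k\ge 1$ and $0$ for $k=0$. Because $\beta$ is even these angular integrals vanish for odd $k$; keeping the even ones $k=2l$, $l\ge 1$, yields precisely $\mathcal{K}_2=-\sum_{l\ge 1}\big(\int\beta(\theta)(\sin\theta)^{2l}\,d\theta\big)\mathbb P_{2l}$. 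Each such integral converges absolutely since $\beta(\theta)(\sin\theta)^{2l}\approx|\theta|^{2l-1-2s}$ near $0$ with $l>s$, while the (vanishing) odd-exponent integrals are to be read in the finite-part sense of Lemma~\ref{new003}.

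\emph{The two-sided bound.} From the formula, $-\mathcal{K}_2=\sum_{l\ge 1}\lambda_{2l}\mathbb P_{2l}$ with $\lambda_{2l}=\int\beta(\theta)(\sin\theta)^{2l}\,d\theta\ge 0$, so $-\mathcal{K}_2\ge 0$. For the upper bound I would use $|\sin\theta|\le 2^{-1/2}$ on $[-\pi/4,\pi/4]$ to get $\lambda_{2l}\le 2^{-(l-1)}\lambda_2$ for all $l\ge 1$; since $\exp(-c_2\mathcal{H})=\sum_{k\ge 0}e^{-c_2(k+1/2)}\mathbb P_k$, the choice $c_2=\tfrac12\ln 2$ and then $c_1$ large in terms of $\lambda_2$ and $c_2$ gives $\lambda_{2l}\le c_1e^{-c_2(2l+1/2)}$ for every $l\ge 1$, while the remaining (odd and zeroth) coefficients of $-\mathcal{K}_2$ are zero; comparing coefficients mode by mode on the Hermite basis is the asserted operator inequality.

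\emph{Main obstacle.} Apart from these steps, everything is routine Gaussian calculus and the spectral theorem for $\mathcal{H}$; the delicate point is the rigorous handling of the finite-part integral defining $K$, namely justifying the interchange of the $v_*$-integration, the $\theta$-integration and the generating series, and in particular controlling the first Hermite mode $k=1$, where the angular integrand is only $O(\theta)$ rather than $O(\theta^{2})$. This is exactly where Lemma~\ref{new003} and the mapping property $K(\mathscr S,\mathscr S)\subset\mathscr S$ of Lemma~\ref{6.lem.defkac} enter.
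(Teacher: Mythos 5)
Your proposal is correct, but it takes a genuinely different route from the paper. You diagonalize $\mathcal{K}_2$ directly in velocity space: the Hermite generating function $F_z=\sum_k\frac{z^k}{\sqrt{k!}}\psi_k$ is a Gaussian, the gain term collapses (I checked: using $v'^2+v_*'^2=v^2+v_*^2$ the $v_*$-integral indeed gives $\psi_0(v)F_{z\sin\theta}(v)$), the loss term gives $\mu(v)$, and extracting Taylor coefficients in $z$ from the pointwise identity $\int(\psi_0\psi_k)(v_*')\mu(v')\,dv_*=(\sin\theta)^k\,\psi_0(v)\psi_k(v)$ — both sides being entire in $z$ with absolutely convergent Gaussian integrals, so the interchange you worry about is harmless if done at this stage, before the $\theta$-integration — reduces everything to the finite-part integrals $\int\beta(\theta)(\sin\theta)^k d\theta$, which vanish for odd $k$ (for $k=1$ only in the sense of Lemma~\ref{new003}, exactly as you flag, and consistently with the fact that \eqref{6.kac02} replaces the first argument by its even part, so $K(\psi_0\psi_k,\mu)=0$ for odd $k$ outright). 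The paper instead never tests against Hermite functions: it computes the distribution kernel of $\mathcal{K}_{2,\theta}$ via the Bobylev formula (Lemma~\ref{prop2}), passes to the Weyl symbol $l_2$ (Lemma~\ref{prop1.2}), and then identifies the spectral decomposition through the Mehler-type identity \eqref{fork2}, with the bound \eqref{new006} coming from the elementary estimate \eqref{eig02}. Your $|\sin\theta|\le 2^{-1/2}$ trick yields the same exponential decay of the eigenvalues $\lambda_{2l}\le 2^{-(l-1)}\lambda_2$ with $\lambda_2<\infty$, and mode-by-mode comparison of two operators diagonal in the same orthonormal basis is indeed the operator inequality, so that part is fine and even slightly simpler than the paper's. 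What the paper's longer route buys is the explicit symbol $l_2\in\mathbf{S}^{-\infty}(\rr^2)$ with Gaussian decay of all derivatives, which is what feeds into Theorem~\ref{th1} and the smoothing/compactness statements; your argument is more elementary and self-contained for the spectral formula and \eqref{new006}, but gives no phase-space information.
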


\bigskip

\noindent
Let us notice that in the integrals appearing in Theorem~\ref{th1.111} the $L^1$ singularity at~0 of the function $\beta$ is erased by the factor $(\sin \theta)^{2l}$ which vanishes at order $2l\ge 2$. The operator $\mathcal{K}_2$, as well as $\mathcal H^N\mathcal L_{2}$ for any $N\in \N$, is a trace class operator on $L^2(\rr)$. As the first part of the linearized non-cutoff Kac operator, the second part $\mathcal{K}_2$ is also diagonal in the Hermite basis. We therefore obtain the following spectral decomposition of the linearized non-cutoff Kac operator:

\bigskip

\begin{theorem}\label{th1.2}
The linearized non-cutoff Kac operator defined by 
$$\mathcal{K}f=-\mu^{-1/2}K(\mu,\mu^{1/2}f)-\mu^{-1/2}K(\mu^{1/2}f,\mu),$$
is a non-negative unbounded operator on $ L^2(\rr)$ with domain $\mathcal{D}$ defined in (\ref{new005}). It is diagonal in the Hermite basis
\begin{equation}\label{sal1}
\mathcal{K}=\sum_{k\ge 1}\lambda_{k}\mathbb P_{k},
\end{equation}
with a discrete spectrum only composed by the non-negative eigenvalues 
\begin{equation}
\lambda_{2k+1}=\int^{\frac{\pi}{4}}_{-\frac{\pi}{4}}\beta(\theta)
\left(1-(\cos\theta)^{2k+1}\right)d\theta \geq 0, \quad  k \geq 0,
\end{equation}
\begin{equation}\label{sal1.11}
\lambda_{2k}=\int^{\frac{\pi}{4}}_{-\frac{\pi}{4}}\beta(\theta)
\left(1-(\cos\theta)^{2k}-(\sin \theta)^{2k}\right)d\theta \geq 0, \quad  k \geq 1,
\end{equation}
satisfying to the asymptotic estimates
\begin{equation}\label{3.llkkb}
\lambda_{k} \approx k^s\quad \text{when $k\rightarrow+\io$}.
\end{equation}
\end{theorem}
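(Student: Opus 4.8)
The plan is to read off Theorem~\ref{th1.2} by superposing the two diagonal representations already established. Since $\mathcal K=\mathcal K_1+\mathcal K_2$ and, by Theorems~\ref{th1.1} and~\ref{th1.111}, each summand is diagonal in the Hermite basis $(\mathbb P_k)_{k\ge 0}$, so is $\mathcal K=\sum_{k\ge 1}\lambda_k\mathbb P_k$ (the $k=0$ contribution vanishing because $1-(\cos\theta)^0=0$ and because the sum defining $\mathcal K_2$ starts at $l=1$). Inspecting \eqref{new002} together with the formula of Theorem~\ref{th1.111}, the projection $\mathbb P_{2k+1}$ receives only the $\mathcal K_1$-contribution $\int_{-\pi/4}^{\pi/4}\beta(\theta)(1-(\cos\theta)^{2k+1})d\theta$, whereas $\mathbb P_{2k}$ receives the $\mathcal K_1$-contribution diminished by the $\mathcal K_2$-contribution, i.e. $\int_{-\pi/4}^{\pi/4}\beta(\theta)\big(1-(\cos\theta)^{2k}-(\sin\theta)^{2k}\big)d\theta$; this gives the formulas \eqref{sal1}--\eqref{sal1.11}. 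Non-negativity of these numbers is elementary: on $[-\pi/4,\pi/4]$ one has $0<\cos\theta\le 1$, so $(\cos\theta)^{2k+1}\le 1$; and since $0\le\cos\theta,\sin\theta\le 1$ with $\cos^2\theta+\sin^2\theta=1$, we get $(\cos\theta)^{2k}+(\sin\theta)^{2k}\le\cos^2\theta+\sin^2\theta=1$ for every $k\ge 1$. (The only vanishing eigenvalue among them is $\lambda_2=0$, which reflects the conservation of kinetic energy in \eqref{prepostk}.)

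Once the asymptotics $\lambda_k\approx k^s$ are in hand, the structural claims are soft. The diagonal operator $\sum_{k\ge 1}\lambda_k\mathbb P_k$ is self-adjoint on its maximal domain $\{u\in L^2(\R):\sum_k\lambda_k^2\Vert\mathbb P_k u\Vert_{L^2}^2<+\infty\}$; since $\lambda_k\approx k^s$ for $k$ large --- the finitely many small indices, $\lambda_2=0$ included, being immaterial for summability --- this domain coincides with $\mathcal D$ of \eqref{new005}, which is also why $\mathcal K$ has there the same domain as $\mathcal K_1$. Non-negativity of $\mathcal K$ follows from $\lambda_k\ge 0$; and because the $\mathbb P_k$ are rank-one and $\lambda_k\to+\infty$, the spectrum equals the discrete set $\{\lambda_k\}_{k\ge 1}$, each spectral value being attained with finite multiplicity and $+\infty$ being the only accumulation point.

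The hard part will be the asymptotics \eqref{3.llkkb}. First, the $\mathcal K_2$-contribution is harmless: by \eqref{new006} the number $\int_{-\pi/4}^{\pi/4}\beta(\theta)(\sin\theta)^{2k}d\theta$ is the eigenvalue of $-\mathcal K_2$ on $\mathbb P_{2k}$, hence exponentially small in $k$ since the spectral projections $\mathbb P_k$ refer to the harmonic oscillator, whose eigenvalues grow linearly. Thus both $\lambda_{2k+1}$ and $\lambda_{2k}$ differ by an exponentially small amount from $I_N:=\int_{-\pi/4}^{\pi/4}\beta(\theta)(1-(\cos\theta)^N)d\theta$ with $N=2k+1$, resp. $N=2k$, and it suffices to prove $I_N\approx N^s$ as $N\to+\infty$. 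Fixing a small $\delta>0$, I would split $I_N$ at $|\theta|=\delta$: the part over $\delta<|\theta|\le\pi/4$ stays bounded (it even converges to $\int_{\delta<|\theta|\le\pi/4}\beta$, since $(\cos\theta)^N\to 0$ there), so using $\beta(\theta)\approx|\theta|^{-1-2s}$ near $0$ one reduces to $\int_{|\theta|\le\delta}|\theta|^{-1-2s}(1-(\cos\theta)^N)d\theta$. On $|\theta|\le N^{-1/4}$ one writes $(\cos\theta)^N=\exp(N\log\cos\theta)=e^{-N\theta^2/2}\big(1+O(N\theta^4)\big)$ with the $O$-term uniformly bounded, and the rescaling $\theta=u/\sqrt N$ turns the main term into $N^s\int_{|u|\le N^{1/4}}|u|^{-1-2s}(1-e^{-u^2/2})du$, which tends to $N^s\int_{\R}|u|^{-1-2s}(1-e^{-u^2/2})du$, a fixed positive multiple of $N^s$ (integrability at the origin uses $s<1$, at infinity uses $s>0$), the correction being $O(N^{s-1})$; on $N^{-1/4}<|\theta|\le\delta$ one bounds $0\le 1-(\cos\theta)^N\le 1$, so that part is $O\big(\int_{N^{-1/4}}^{\delta}\theta^{-1-2s}d\theta\big)=O(N^{s/2})=o(N^s)$. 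Combining these upper and lower estimates yields $I_N\approx N^s$, and hence $\lambda_k\approx k^s$. This Laplace-type asymptotic --- in essence the same computation that identifies the domain $\mathcal D$ of $\mathcal K_1$ --- is the only genuinely non-routine ingredient; everything else is bookkeeping.
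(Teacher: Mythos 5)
Your proof is correct and follows the same route as the paper: Theorem \ref{th1.2} is obtained by superposing the diagonalizations of $\mathcal K_{1}$ and $\mathcal K_{2}$ in the Hermite basis from Theorems \ref{th1.1} and \ref{th1.111} (the paper cites the underlying Lemmas \ref{l1.1cutoff} and \ref{prop1.2} together with \eqref{fork2}, \eqref{6.asyvp} and \eqref{eig02}, which is the same content), and then estimating the eigenvalues. The only place where you argue differently is the asymptotics $\int\beta(\theta)\bigl(1-(\cos\theta)^{k}\bigr)d\theta\approx k^{s}$: the paper substitutes $v=2\sin^{2}(\frac{\theta}{2})$, rescales $v=w/k$ and applies dominated convergence, which yields the sharp equivalent $\lambda'_{k}\sim c_{0}k^{s}$ with the explicit constant of \eqref{czero} (reused later for Theorem \ref{th1}), whereas your splitting at $|\theta|=\delta$ and $|\theta|=N^{-1/4}$ with the Gaussian approximation of $(\cos\theta)^{N}$ is a correct variant that only produces the two-sided bound --- but that is all \eqref{3.llkkb} asks. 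Handling the $\mathcal K_{2}$ contribution through \eqref{new006} rather than the direct estimate \eqref{eig02} is legitimate since Theorem \ref{th1.111} precedes this statement (and is in fact more than needed: the trivial bound $\int\beta(\theta)(\sin\theta)^{2k}d\theta\le\int\beta(\theta)\sin^{2}\theta\, d\theta$ already suffices against $k^{s}$).
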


\bigskip

\noindent
We notice that the lowest eigenvalue zero corresponds to the fact that the Maxwellian distribution $\mu$ is an equilibrium
$$\mathcal{K} \mu^{1/2}=-\mu^{-1/2}K(\mu,\mu)-\mu^{-1/2}K(\mu,\mu)=0,$$
by conservation of the kinetic energy. We shall now relate these operator-theoretical properties to the phase space structure of the linearized non-cutoff Kac operator. To that end, we define for any $m \in \rr$ the symbol classes $\mathbf{S}^m(\rr^{2d})$  as the set of smooth functions $a(v,\xi)$ from  $\R^d\times \R^d$ into $\C$ satisfying to the estimates
\begin{equation}\label{2.symcl}
\forall (\alpha,\beta) \in \nn^{2d}, \exists C_{\alpha\beta}>0, \forall (v,\xi) \in \rr^{2d},
|\partial_v^{\alpha}\partial_{\xi}^{\beta}a(v,\xi)| \leq C_{\alpha,\beta} \langle (v,\xi) \rangle^{2m-|\alpha|-|\beta|},
\end{equation}
with $\valjp{(v,\xi)}=\sqrt{1+\val v^2+\val \xi^2}$. We consider the Weyl quantization of symbols in the class $\mathbf{S}^m(\rr^{2d})$
\begin{equation}\label{2.weylq}
a^w(v,D_v)u=\frac{1}{(2\pi)^d}\int_{\rr^{2d}}e^{i(v-y) \cdot \xi}a\Big(\frac{v+y}{2},\xi\Big)u(y)dyd\xi.
\end{equation}
Some reminders about the Weyl quantization are recalled in Section~\ref{6.sec.susub}. 
We notice in particular that the Weyl symbol of the $d$-dimensional harmonic oscillator
$$|\xi|^2+\frac{|v|^2}{4} \in \mathbf S^1(\R^{2d}),$$
is a first order symbol in this symbolic calculus. The symbol class $\mathbf{S}^{-\infty}(\rr^{2d})$ denotes the class
$\cap_{m \in \rr}\mathbf{S}^m(\rr^{2d})$. We define for $m\ge 0$ the Sobolev space
{\small
\begin{equation}\label{2.sobol}
B^m(\rr^d)=\{u \in L^2(\rr^d),\ \mathcal H^m u \in L^2(\rr^d)\}\hskip-3pt
=\hskip-3pt\Big\{u \in L^2(\rr^d),\ \sum_{k\ge 1}k^{2m}\norm{\mathbb P_{k}u}_{L^2}^2<\hskip-1pt+\io\Big\}
\end{equation}
}\noindent
and $B^{-m}(\rr^d)$
as the dual space of $B^m(\rr^d)$. It follows from the general theory of Sobolev spaces attached to a pseudodifferential calculus (see e.g. Section 2.6 in \cite{birkhauser}) that
$$\forall m\in \R, \quad B^m(\R^d)=\{u\in\mathscr S'(\R^d), \forall a\in \mathbf S^m(\R^{2d}), a^w u\in L^2(\R^d)\}.$$
For definiteness, we shall now make the following choice for the cross section 
\begin{equation}\label{sing}
\beta(\theta)=\frac{|\cos\frac{\theta}{2}|}{|\sin\frac{\theta}{2}|^{1+2s}},\quad
\val \theta\leq\frac{\pi}{4}.
\end{equation}
With that choice, we get a more precise equivalent than in Theorem~\ref{th1.2},
\begin{equation}\label{czero}
\lambda_{k}\sim c_{0}k^s\quad \text{when $k\rightarrow+\io$ with $c_{0}=\frac{2^{1+s}}{s}\Gamma(1-s).$}
\end{equation}

\bigskip

\begin{theorem}\label{th1}
Under the assumption \eqref{sing}, the linearized non-cutoff Kac operator 
$$\mathcal K=l^w(v,D_v),$$
is a pseudodifferential operator whose Weyl symbol $l(v,\xi)$ is real-valued, belongs to the symbol class $\mathbf{S}^{s}(\rr^2)$ with the following asymptotic expansion:
there exists a sequence of real numbers $(c_k)_{k \geq 1}$ such that
$$\forall N \geq 1, \ l(v,\xi)\equiv c_0 \Big(1+\xi^2+\frac{v^2}{4}\Big)^s-d_{0}+\sum_{k=1}^Nc_k\Big(1+\xi^2+\frac{v^2}{4}\Big)^{s-k} \mod \mathbf{S}^{s-N-1}(\rr^2),$$
where the two positive constants $c_0,d_{0}>0$ are defined in \eqref{czero} and \eqref{dzero}.
\end{theorem}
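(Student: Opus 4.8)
The plan is to start from the spectral decomposition $\mathcal K=\sum_{k\ge 1}\lambda_k\mathbb P_k$ of Theorem~\ref{th1.2}, together with the asymptotics $\lambda_k\sim c_0k^s$ from \eqref{czero}, and to recognize that any operator diagonal in the Hermite basis with eigenvalues given by a nice function of $k$ is a pseudodifferential operator whose symbol is the corresponding function of the harmonic oscillator symbol $1+\xi^2+\frac{v^2}{4}$. Concretely, the operator $\mathcal H$ has Weyl symbol $1+\xi^2+\frac{v^2}{4}$ (up to the precise normalization linking the eigenvalue $k$ of $\mathbb P_k$ to $\mathcal H$; I would first pin down that $\mathcal H$ acts as $k+\tfrac12$ or as some affine function of $k$ on the $k$-th Hermite space, using the reminders of Section~\ref{6.sec.harmo}), and the functional calculus $F(\mathcal H)$ for reasonable $F$ produces a pseudodifferential operator in the class $\mathbf S^{\bullet}$. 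So the first step is to write $\mathcal K=F(\mathcal H)$ for an explicit function $F$ determined by $F(\text{eigenvalue on }\mathbb P_k)=\lambda_k$.

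The second step is to produce the asymptotic expansion of $F$. From \eqref{sal1}-\eqref{sal1.11} and the explicit choice \eqref{sing} of $\beta$, the eigenvalues $\lambda_k$ are given by integrals $\int_{-\pi/4}^{\pi/4}\beta(\theta)\bigl(1-(\cos\theta)^k-(\sin\theta)^k\ldots\bigr)d\theta$. The term involving $(\sin\theta)^{2k}$ is exponentially small in $k$ (this is exactly the content of \eqref{new006}, that $\mathcal K_2$ is smoothing of infinite order), hence contributes to $\mathbf S^{-\infty}$ and can be dropped modulo $\mathbf S^{s-N-1}$ for every $N$. For the main term $\int\beta(\theta)(1-(\cos\theta)^k)d\theta$ I would substitute $(\cos\theta)^k=\exp(k\ln\cos\theta)$, use the singularity $\beta(\theta)\approx|\theta|^{-1-2s}$ near $0$ and Watson's lemma / a Mellin-transform or Laplace-type expansion to get a full asymptotic expansion of the form $c_0k^s-d_0+\sum_{j\ge1}c_jk^{s-j}$ as $k\to+\infty$; the specific choice \eqref{sing} is designed precisely so that this expansion has clean coefficients, $c_0$ as in \eqref{czero} and $d_0$ as in \eqref{dzero}. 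This then identifies $F(t)=c_0t^s-d_0+\sum_j c_jt^{s-j}+(\text{error})$ with the error decaying faster than any negative power, once $t$ is the appropriate affine function of $k$ and after absorbing the affine shift into a redefinition of the coefficients.

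The third step is to transfer the scalar expansion of $F$ to an operator/symbol expansion. I would invoke the symbolic calculus of the class $\mathbf S^m(\rr^{2d})$ recalled in Section~\ref{6.sec.susub}: the harmonic oscillator symbol $\mathtt h=1+\xi^2+\frac{v^2}{4}$ is an elliptic first-order symbol, its real powers $\mathtt h^{s-k}$ lie in $\mathbf S^{s-k}$, and functional calculus of $\mathcal H$ is compatible with the Weyl symbolic calculus, so $\mathtt h^{s-k}{}^w$ agrees with $\mathcal H^{s-k}$ modulo lower order. Writing $\mathcal K-c_0\mathcal H^s+d_0-\sum_{k=1}^N c_k\mathcal H^{s-k}$, this operator is diagonal in the Hermite basis with eigenvalues that are $O(\langle k\rangle^{s-N-1})$ by step two, hence (using the Sobolev-space characterization $B^m=\{u:\sum k^{2m}\|\mathbb P_ku\|^2<\infty\}$ of \eqref{2.sobol} and the fact that such a remainder is a bounded operator $B^t\to B^{t-(s-N-1)}$ for all $t$) it belongs to $\mathbf S^{s-N-1}(\rr^2)$; and since $N$ is arbitrary the full asymptotic expansion follows, and in particular $l\in\mathbf S^s$.

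The main obstacle I expect is the precise extraction of the asymptotic expansion of the eigenvalue integral $\int_{-\pi/4}^{\pi/4}\beta(\theta)(1-(\cos\theta)^k)\,d\theta$ to all orders with the exactly stated constants: one must control both the singular behaviour of $\beta$ near $\theta=0$ (which produces the $k^s$ and $k^{s-j}$ terms via a Mellin/Watson-type analysis of $\int_0^{\varepsilon}\theta^{-1-2s}(1-e^{-k\theta^2/2+\ldots})d\theta$) and the smooth, non-universal part of $\beta$ away from $0$ together with the boundary at $\pi/4$ (which only contributes a constant $-d_0$ plus exponentially small terms). Keeping track of how the corrections to $\ln\cos\theta=-\theta^2/2-\theta^4/12-\cdots$ feed into the coefficients $c_j$, and confirming that after the affine reparametrization $k\leftrightarrow\mathtt h$ no $\log$ terms appear, is the delicate bookkeeping; the rest is a fairly mechanical application of the symbolic calculus and the Hermite-diagonal characterization of the $\mathbf S^m$ scale.
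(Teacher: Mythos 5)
There is a genuine gap at the decisive step of your third paragraph. You argue that the remainder $\mathcal K-c_0\mathcal H^s+d_0-\sum_{k\le N}c_k\mathcal H^{s-k}$, being diagonal in the Hermite basis with eigenvalues $O(\langle k\rangle^{s-N-1})$, is bounded from $B^t$ to $B^{t-(s-N-1)}$ for every $t$ and ``hence'' has Weyl symbol in $\mathbf S^{s-N-1}(\rr^2)$. That implication is false: boundedness on the whole scale $B^t$ is far weaker than membership of the Weyl symbol in a class $\mathbf S^m$, which requires pointwise estimates on all derivatives of the symbol. A concrete counterexample is the parity operator $\sum_{k\ge 0}(-1)^k\mathbb P_k$: it is bounded on every $B^t$, yet its Weyl symbol is a multiple of the Dirac mass at the origin of phase space, nowhere near $\mathbf S^0$. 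The point is that the Weyl symbols of the projections $\mathbb P_k$ are Laguerre functions whose derivatives grow with $k$, so decay of the eigenvalue sequence alone does not control the symbol; one needs, in addition, symbol-type estimates on the eigenvalues as a \emph{function} of $k$ (control of finite differences/derivatives in $k$ of all orders), i.e.\ one must exhibit a scalar symbol $F$ of order $s$ with $\mathcal K_1=F(\mathcal H)$ and then invoke a functional-calculus theorem for the harmonic oscillator to conclude that $F(\mathcal H)$ is a pseudodifferential operator with the corresponding expansion. Your step two only yields a pointwise asymptotic expansion of $\lambda_k$ as $k\to+\infty$, not these derivative estimates, and the functional-calculus transfer (including the fact that the corrections in the symbol of $\mathcal H^{s-k}$ re-enter the expansion and must be tracked) is not supplied. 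The same objection applies, in milder form, to your treatment of the shift between $k$ and the eigenvalue $k+\frac12$ of $\mathcal H$.

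This is exactly the point where the paper proceeds differently: instead of going through the spectral decomposition, it computes the Weyl symbol of $\mathcal K_1$ in closed form from Bobylev's formula and the kernel-to-symbol formula (Lemma \ref{l1.1cutoff}), obtaining $l_1(v,\xi)=\int_{|\theta|\le\frac\pi4}\beta(\theta)\bigl[1-\sec^2(\tfrac\theta2)e^{-2\tan^2(\frac\theta2)(\xi^2+\frac{v^2}{4})}\bigr]d\theta$, and then performs the asymptotic analysis directly on this integral in the variable $\lambda=1+\xi^2+\frac{v^2}{4}$ (substitution $\tau=\tan^2(\tfrac\theta2)$, integration by parts, Taylor expansion of $\kappa(z)=(1+z)^se^{2z}$ with remainder estimates verified derivative by derivative), which yields the expansion \eqref{4.asymp} genuinely in $\mathbf S^{s-N-1}$; the part $\mathcal K_2$ is handled by an explicit Gaussian bound on its symbol (Lemma \ref{prop1.2}), not merely by the exponential decay of its eigenvalues. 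Your Watson-lemma analysis of $\lambda_k$ is a reasonable sketch of the scalar asymptotics, but without the missing functional-calculus machinery (or a direct symbol computation as in the paper) it does not establish the statement about $l(v,\xi)$ and the classes $\mathbf S^{s-N-1}(\rr^2)$.
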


\bigskip

\noindent
This result shows that the linearized non-cutoff Kac operator is a pseudodifferential operator whose principal symbol is the same as for the fractional harmonic oscillator
$$c_{0}\Big(1-\Delta_v+\frac{v^2}{4}\Big)^s.$$
According to standard results about the phase space structure of the powers of positive elliptic pseudodifferential operators (see e.g. Section~4.4 in~\cite{nier}), we notice that the linearized non-cutoff Kac operator is equal to the fractional harmonic oscillator
$$c_{0}\Big(1-\Delta_v+\frac{v^2}{4}\Big)^s,$$
up to a bounded operator on $L^2(\rr)$. Let us underline that the fractional power $0<s<1$ of the harmonic oscillator only relates to structure of the singularity (\ref{ah1}) whereas the different constants $d_{0},(c_k)_{k \geq 0}$ appearing in the asymptotic expansion
\begin{equation}\label{asymp}
l(v,\xi) \sim 
c_0\Big(1+\xi^2+\frac{v^2}{4}\Big)^{s}-d_{0}
+\sum_{k=1}^{+\infty}c_k\Big(1+\xi^2+\frac{v^2}{4}\Big)^{s-k},
\end{equation}
may be computed explicitly and depend directly on the exact expression chosen for the angular factor (\ref{sing}). This asymptotic expansion provides a complete description of the phase space structure of the linearized non-cutoff Kac operator. As we shall see in the proof of Theorem~\ref{th1}, the two parts $\mathcal{K}_1$ and $\mathcal{K}_2$ account very differently in the
way the linearized non-cutoff Kac operator acts. The first part $\mathcal{K}_1$ is a pseudodifferential operator whose Weyl symbol $l_1$ accounts for all the asymptotic expansion of the symbol $l$, 
$$l_1(v,\xi) \sim c_0\Big(1+\xi^2+\frac{v^2}{4}\Big)^{s}-d_{0}+\sum_{k=1}^{+\infty}c_k\Big(1+\xi^2+\frac{v^2}{4}\Big)^{s-k},$$
whereas the symbol of the operator $\mathcal{K}_2$ belongs to the symbol class $\mathbf{S^{-\infty}}(\rr^{2})$. This shows that $\mathcal{K}_2$ is a smoothing operator in any direction of the phase space
$$\|\langle v \rangle^{N_1}\mathcal{K}_2f\|_{H^{N_2}(\rr)} \lesssim \|f\|_{L^2(\rr)},$$
for all $N_1,N_2 \in \nn$, $f \in \mathscr{S}(\rr)$ and that $\mathcal K_{2}$ defines a compact operator on $L^2(\rr)$.

\subsection{Main results for the linearized non-cutoff radially symmetric Boltzmann operator}\label{resbolt}
We consider the linearized non-cutoff Boltzmann operator defined in (\ref{jan1}) with Maxwellian molecules  
$$\mathscr{L}f=-\mu^{-1/2}Q(\mu,\mu^{1/2}f)-\mu^{-1/2}Q(\mu^{1/2}f,\mu),$$
acting on the radially symmetric Schwartz space on $\rr^d$ (see Section \ref{6.sec.radia}) with $d \geq 2$,
{\small
\begin{equation}\label{2.radia}
\mathscr{S}_{r}(\rr^d)=\big\{f\in\mathscr{S}(\rr^d),\forall v\in\rr^d,\forall A\in O(d), f(v)=f(Av)\big\}=\big\{f(\val v)\big\}_{\substack{f \text{ even }\\ f \in \mathscr S(\R)}},
\end{equation}
}\noindent
where $O(d)$ stands for the orthogonal group of $\rr^d$. We recall that the case of Maxwellian molecules corresponds to the case when $\gamma=0$ in the kinetic factor (\ref{sa0}) and that the non-negative cross section $b(\cos \theta)$ is assumed to be supported where $\cos \theta \geq 0$ and to satisfy the assumption (\ref{sa1}). We define the following function
\begin{equation}\label{new001}
\beta(\theta)=|\SSS^{d-2}||\sin 2\theta|^{d-2}b(\cos 2\theta) \substack{\\ \\ \approx \\ \theta \to 0 } \val\theta^{-1-2s}.
\end{equation}
The first result gives an operator-theoretical formula expressing the first part of the linearized non-cutoff radially symmetric Boltzmann operator as a function of the contraction semigroup generated by the $d$-dimensional harmonic oscillator:

\bigskip

\begin{theorem}\label{th1.1bis}
When it acts on the function space $\mathscr{S}_{\textrm{r}}(\rr^d)$, the first part of the linearized non-cutoff Boltzmann operator with Maxwellian molecules defined by
$$\mathscr{L}_1f=-\mu^{-1/2}Q(\mu,\mu^{1/2}f),$$ 
is equal to
\begin{equation}\label{312n}
\mathcal{L}_1=\int^{\frac{\pi}{4}}_{-\frac{\pi}{4}}\beta(\theta)
\left(1-(\sec\theta)^{\frac{d}{2}}{\exp\big(-\mathcal{H}\ln(\sec\theta) \big)}\right)d\theta,
\end{equation}
where $\beta$ is the function defined in \eqref{new001} and $\mathcal H=-\Delta_v+\frac{|v|^2}{4}$ is the $d$-dimensional harmonic oscillator so that
\begin{equation}\label{new002c}
\mathcal{L}_1=\sum_{k\ge 1}\Big(\int^{\frac{\pi}{4}}_{-\frac{\pi}{4}}\beta(\theta)\bigl(1-(\cos \theta)^k\bigr)d\theta\Big) \mathbb P_{k},
\end{equation}
where the projections $\mathbb P_{k}$ onto the Hermite basis are described in Section \ref{6.sec.harmo}.
\end{theorem}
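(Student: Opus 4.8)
The plan is to adapt, with $d$ in place of $1$, the argument establishing Theorem~\ref{th1.1} for the non-cutoff Kac operator; the one genuinely new ingredient is the reduction of the $d$-dimensional radially symmetric Boltzmann collision integral to a one-dimensional integral in the deviation angle whose integrand, acting on $\mathscr{S}_{r}(\rr^d)$, is a Gaussian-kernel operator that will be identified with the contraction semigroup of the $d$-dimensional harmonic oscillator. First I would rewrite $\mathscr{L}_1f=-\mu^{-1/2}Q(\mu,\mu^{1/2}f)$ by means of the identity $\mu(v)\mu(v_*)=\mu(v')\mu(v'_*)$, which follows from the conservation of kinetic energy and the definition \eqref{maxwe} of $\mu$: the relation $\mu(v'_*)=\mu(v)\mu(v_*)/\mu(v')$ gives, for Maxwellian molecules (so that the cross section is $b(\cos\theta)$ with $\theta$ the deviation angle),
$$\mathcal{L}_1f(v)=\int_{\rr^d}\int_{\SSS^{d-1}}b(\cos\theta)\,\mu(v_*)\Big(f(v)-\Big(\frac{\mu(v)}{\mu(v')}\Big)^{1/2}f(v')\Big)\,d\sigma\,dv_*.$$
The cancellation between the gain and loss parts of the bracket erases the non-integrable singularity \eqref{sa1}, exactly as for the Kac operator and as reflected by the factor $1-(\cos\theta)^{k}$ in \eqref{new002c}, so that $\mathcal{L}_1f$ is well defined as a finite part integral in the sense of Section~\ref{kacsection} and Lemma~\ref{new003}. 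Writing $d\sigma=(\sin\theta)^{d-2}\,d\theta\,d\omega$ with $\omega\in\SSS^{d-2}$, using $\int_{\rr^d}\mu(v_*)\,dv_*=1$, halving the deviation angle and recalling the definition \eqref{new001} of $\beta$, the loss part becomes the constant term of \eqref{312n} and the gain part its semigroup term, the two combining into a convergent $\theta$-integral because $1-(\sec\theta)^{d/2}e^{-\mathcal{H}\ln\sec\theta}$ vanishes at $\theta=0$.

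The heart of the proof is the identification of the gain part: after halving the deviation angle and carrying out the azimuthal $\SSS^{d-2}$-integration, it amounts to showing that, for each fixed $\theta\in[-\tfrac{\pi}{4},\tfrac{\pi}{4}]$, the integral operator
$$f\longmapsto \int_{\rr^d}\mu(v_*)\,\frac{1}{|\SSS^{d-2}|}\int_{\SSS^{d-2}}\Big(\frac{\mu(v)}{\mu(v')}\Big)^{1/2}f(v')\,d\omega\,dv_*$$
(the direction $\sigma$ now being taken at deviation angle $2\theta$ from $\widehat{v-v_*}$, so that $v'$ depends on $v_*$ and $\omega$) coincides on $\mathscr{S}_{r}(\rr^d)$ with $(\sec\theta)^{d/2}e^{-\mathcal{H}\ln\sec\theta}$, where $\mathcal{H}=-\Delta_v+\frac{|v|^2}{4}$. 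Since both operators commute with rotations, hence preserve radial symmetry, it suffices to test on the radial eigenfunctions $\phi_{2m}$ of $\mathcal{H}$, which satisfy $\mathcal{H}\phi_{2m}=\big(2m+\tfrac d2\big)\phi_{2m}$ and form a basis of $\mathscr{S}_{r}(\rr^d)$: the operator on the right then has eigenvalue $(\sec\theta)^{d/2}(\cos\theta)^{2m+d/2}=(\cos\theta)^{2m}$. For the operator on the left I would compute the Gaussian integral by changing variables from $v_*$ to $v'$ and working out the quadratic exponent from the collisional relations $|v'|^2=\tfrac12(|v|^2+|v_*|^2)+\tfrac12|v-v_*|\,(v+v_*)\cdot\sigma$ and $v'-v'_*=|v-v_*|\sigma$, so as to recognize, after the $\SSS^{d-2}$-average, the Mehler kernel of $(\sec\theta)^{d/2}e^{-\mathcal{H}\ln\sec\theta}$ and the same eigenvalue $(\cos\theta)^{2m}$; equivalently one may invoke the classical diagonalization of the Maxwellian-molecule collision operator by the Hermite/Laguerre functions (cf.\ \cite{wang}), or go through the Bobylev--Fourier representation, which for radial functions already puts the Boltzmann operator into Kac form. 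This is the only place where the dimension enters, the exponent $\tfrac d2$ replacing the exponent $\tfrac12$ of the Kac case as forced by $\mathcal{H}\mu^{1/2}=\tfrac d2\mu^{1/2}$; I expect this Gaussian/Mehler computation (together with the bookkeeping splitting the $\SSS^{d-1}$-integral into the deviation angle and the azimuthal variable) to be the main obstacle, the handling of the finite part being routine once the corresponding lemmas for the Kac operator are in hand.

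Combining these two steps yields \eqref{312n}, and expanding it spectrally via $(\sec\theta)^{d/2}e^{-\mathcal{H}\ln\sec\theta}=\sum_{k\ge 0}(\cos\theta)^{k}\,\mathbb{P}_{k}$ on the $d$-dimensional Hermite levels gives \eqref{new002c}, the $k=0$ term dropping out since $1-(\cos\theta)^{0}=0$ (and, on $\mathscr{S}_{r}(\rr^d)$, only even $k$ actually contribute, no radial eigenfunction of $\mathcal{H}$ sitting at an odd level); the interchange of $\sum_{k}$ with the angular integral is legitimate because $1-(\cos\theta)^{k}=O(k\theta^{2})$ near $\theta=0$, exactly as in the proof of Theorem~\ref{th1.1}.
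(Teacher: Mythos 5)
Your proposal is correct in outline but follows a genuinely different route from the paper, and it leaves its central computation as a sketch. The paper never works with the physical-space gain/loss splitting: it first proves a radial Bobylev formula (Lemma~\ref{3.lem.kn223}) which, for $f,g\in\mathscr S_r(\rr^d)$, collapses the $\SSS^{d-1}$-integral into the Kac-type expression $\int\beta(\theta)\bigl[\hat g(\xi\sin\theta)\hat f(\xi\cos\theta)-\hat g(0)\hat f(\xi)\bigr]d\theta$; then, by Fourier inversion, it computes the distribution kernel of the fixed-$\theta$ piece, converts it into a Weyl symbol via \eqref{4.lk555}, getting $1-\cos^{-2d}(\tfrac{\theta}{2})\exp\bigl(-2\tan^2(\tfrac{\theta}{2})(|\xi|^2+\tfrac{|v|^2}{4})\bigr)$ (Lemma~\ref{l1.1bis}), and identifies this with $1-(\sec\theta)^{\frac d2}e^{-\mathcal H\ln\sec\theta}$ through the Mehler identity \eqref{6.444mm}; the expansion \eqref{new002c} then follows from $e^{-t\mathcal H}=\sum_k e^{-t(k+\frac d2)}\mathbb P_k$. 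Your route --- the identity $\mu\mu_*=\mu'\mu_*'$, keeping the bracket inside the $\theta$-integral so that Lemma~\ref{new003} applies, and testing the fixed-$\theta$ gain operator against the radial Hermite eigenfunctions (all at even levels, spanning the radial subspace, both operators being rotation-invariant and $L^2$-bounded) --- is legitimate and avoids the Weyl calculus altogether; your eigenvalue bookkeeping on the semigroup side and the singularity/interchange arguments are all correct. The one caveat concerns the step you yourself flag as the main obstacle: for $d\ge 2$ the literal ``change of variables from $v_*$ to $v'$'' is not available at fixed $(\theta,\omega)$, since $\omega$ is constrained to be orthogonal to $(v-v_*)/|v-v_*|$ and the parametrization $(\rho,\nu,\omega)$ has $2d-2$ parameters mapping onto $\rr^d$, so the direct Gaussian computation is substantially messier than in the Kac case --- which is exactly why the paper passes to the Fourier side first. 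Your stated fallbacks do close this: the Bobylev reduction you mention is precisely Lemma~\ref{3.lem.kn223}, after which the argument is the $d$-dimensional replay of Theorem~\ref{th1.1}, and invoking the classical diagonalization of \cite{wang} together with the spectral theorem would also yield \eqref{312n}; so the plan is workable, provided the deferred identification is executed along one of those lines rather than by the naive change of variables.
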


\bigskip

\noindent
As for the first part of the linearized non-cutoff Kac operator, the domain of the operator $\mathcal L_{1}$ can be taken as
\begin{equation}\label{new005c}
\mathcal D=\Big\{u\in L^2({\rr^d}), \sum_{k\ge 0} k^{2s}\Vert{\mathbb P_{k}u}\Vert_{L^2}^2< +\infty\Big\}=\{u\in L^2({\rr^d}), \mathcal H^s u\in L^2(\rr^d)\}.
\end{equation}
Similarly to the second part of the linearized non-cutoff Kac operator, the next
theorem provides an operator-theoretical formula expressing the second part of the linearized non-cutoff radially symmetric Boltzmann operator as function of the spectral projections of the harmonic oscillator.

\bigskip

\begin{theorem}\label{th1.111bis}
When it acts on the function space $\mathscr{S}_{\textrm{r}}(\rr^d)$, the second part of the linearized non-cutoff Boltzmann operator with Maxwellian molecules defined by
$$\mathscr{L}_2f=-\mu^{-1/2}Q(\mu^{1/2}f,\mu),$$
is equal to
\begin{equation}\label{2.regul}\mathcal{L}_2=-\sum_{l\ge 1}\Big(\int^{\frac{\pi}{4}}_{-\frac{\pi}{4}}\beta(\theta)(\sin \theta)^{2l}d\theta\Big)\mathbb P_{2l},
\end{equation}
where $\beta$ is the function defined in \eqref{new001} and $\mathbb P_{k}$ are the spectral projections onto the Hermite basis described in Section \ref{6.sec.harmo}.
Furthermore, there exist some positive constants $c_1, c_2>0$ such that
\begin{equation}\label{new006}
0\le -\mathcal L_{2}\le c_1\exp-c_2 \mathcal H,
\end{equation}
where $\mathcal{H}=-\Delta_v+\frac{|v|^2}{4}$ is the $d$-dimensional harmonic oscillator.
\end{theorem}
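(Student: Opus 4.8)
The plan is to follow the scheme of the proofs of Theorems~\ref{th1.111} and~\ref{th1.1bis}, the one genuinely new point being that the Maxwellian $\mu$ now occupies the second slot of the bilinear operator $Q$. The first step is to use the conservation of kinetic energy $|v'|^2+|v'_*|^2=|v|^2+|v_*|^2$, which gives the pointwise identity $\mu(v')^{1/2}\mu(v'_*)^{1/2}=\mu(v)^{1/2}\mu(v_*)^{1/2}$, to rewrite, for $f\in\mathscr S_r(\rr^d)$,
$$\mathscr L_2 f(v)=-\mu(v)^{-1/2}Q(\mu^{1/2}f,\mu)(v)=-\int_{\rr^d}\int_{\SSS^{d-1}}b(k\cdot\sigma)\,\mu(v_*)^{1/2}\Big(\mu(v')^{1/2}f(v'_*)-\mu(v)^{1/2}f(v_*)\Big)d\sigma\,dv_*,$$
with $k=(v-v_*)/|v-v_*|$, the two terms in the parenthesis being kept together and the integral understood in the finite-part sense of Lemma~\ref{new003}; this is legitimate since the bracket vanishes when $\sigma=k$ (then $v'=v$, $v'_*=v_*$), so that the non-integrable angular singularity of the cross section is compensated. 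After this rewriting the operator only involves the ground state $\mu^{1/2}$ of the harmonic oscillator $\mathcal H$ and the radial function $f$.

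The core of the argument is then the reduction of this $d$-dimensional expression, for radial $f$, to a one-dimensional Kac-type operator, exactly as in the proof of Theorem~\ref{th1.1bis}. Passing to the $\omega$-representation of the collision, in which $(v,v_*)\mapsto(v',v'_*)$ becomes the linear isometry of $\rr^{2d}$ that reflects the relative velocity $v-v_*$ across the hyperplane $\omega^\perp$ while leaving $v+v_*$ fixed, and then integrating out the rotational degrees of freedom on which neither $f$ nor $\mu$ depends, produces the cross section $\beta$ of~\eqref{new001} --- the factor $|\SSS^{d-2}||\sin 2\theta|^{d-2}$ arising as the Jacobian of the angular integration and $b(\cos 2\theta)$ from the half-angle relation between the $\omega$-angle $\theta\in[-\tfrac\pi4,\tfrac\pi4]$ and the $\sigma$-deviation angle. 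Since this collision is an isometry of $\rr^{2d}$, it commutes with the harmonic oscillator $-\Delta_v-\Delta_{v_*}+\tfrac14(|v|^2+|v_*|^2)$; combining this with the fact that one collision slot carries the ground state $\mu^{1/2}$, I would compute the action of $\mathscr L_2$ on the Hermite eigenfunctions of $\mathcal H$ by expanding $f$ on the radial Hermite basis and evaluating the resulting Gaussian integrals via Mehler's formula, equivalently via the generating function of the Hermite functions: the gain term tested against a level-$2l$ eigenfunction returns, after integration in $\theta$, the coefficient $\int_{-\pi/4}^{\pi/4}\beta(\theta)(\sin\theta)^{2l}d\theta$ times that eigenfunction, only even powers of $\sin\theta$ surviving by the parity of $\beta$ and the radial symmetry. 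This is precisely~\eqref{2.regul}, the computation running parallel to the one-dimensional one of Theorem~\ref{th1.111}; it also shows that $\mathscr L_2$ is diagonal in the Hermite basis and, like $\mathcal H^N\mathscr L_2$ for every $N\in\nn$, a trace class operator on $L^2(\rr^d)$.

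The inequality~\eqref{new006} is then an elementary consequence of~\eqref{2.regul}. Writing $-\mathscr L_2=\sum_{l\ge1}\mu_{2l}\mathbb P_{2l}$ with $\mu_{2l}=\int_{-\pi/4}^{\pi/4}\beta(\theta)(\sin\theta)^{2l}d\theta\ge0$, the lower bound $0\le-\mathscr L_2$ is immediate; for the upper bound, $|\sin\theta|\le 2^{-1/2}$ on $[-\tfrac\pi4,\tfrac\pi4]$ gives $\mu_{2l}\le 2^{1-l}C$ with $C=\int_{-\pi/4}^{\pi/4}\beta(\theta)\sin^2\theta\,d\theta<+\infty$, finite because $\beta(\theta)\sin^2\theta=O(\theta^{1-2s})$ is integrable for $s<1$; since $\mathcal H$ acts on $\mathbb P_{2l}$ by the eigenvalue $2l+\tfrac d2$, the choice $c_2=\tfrac12\ln2$ and $c_1=2^{1+d/4}C$ yields $\mu_{2l}\le c_1\exp\!\big(-c_2(2l+\tfrac d2)\big)$ for all $l\ge1$, hence $0\le-\mathscr L_2\le c_1\exp(-c_2\mathcal H)$. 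The main obstacle is the second paragraph: carrying out the $d$-dimensional change of variables to the $\omega$-representation and the angular reduction while keeping the two pieces of the non-integrable kernel together, and then organising the Mehler-type Gaussian computation so that the monomials $(\sin\theta)^{2l}$ emerge cleanly; by contrast the reduction of the collision to a linear isometry and the final exponential estimate are routine.
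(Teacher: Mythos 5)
Your route differs from the paper's: the paper reduces to the Kac structure on the Fourier side via the radial Bobylev formula (Lemma~\ref{3.lem.kn223}), computes the distribution kernel and then the Weyl symbol of $\mathcal L_{2,\theta;d}$ as an explicit combination of three Gaussians (Lemma~\ref{prop1.2'}), and converts that combination into $-\sum_{l\ge 1}(\sin\theta)^{2l}\mathbb P_{2l}$ through the Mehler-type identity \eqref{fork2} before integrating in $\theta$; you propose a physical-space angular reduction followed by a direct diagonalization on the Hermite basis. Your first step (the rewriting using $\sqrt{\mu'\mu'_*}=\sqrt{\mu\mu_*}$ together with the finite part of Lemma~\ref{new003}) is correct, and your last paragraph is a complete and correct derivation of \eqref{new006} from \eqref{2.regul}; it is even a bit more robust than the paper's estimate \eqref{eig02}, since it only uses $\int\beta(\theta)\sin^2\theta\,d\theta<+\infty$, which holds under \eqref{new001} and not only for the explicit choice \eqref{new001'}.

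The gap is the second paragraph, which carries the whole content of \eqref{2.regul} and is only asserted. Two concrete points. First, the justification you offer for the reduction to the cross section $\beta$ --- ``integrating out the rotational degrees of freedom on which neither $f$ nor $\mu$ depend'', with $|\SSS^{d-2}|$ appearing as a Jacobian --- does not hold as stated: in the parametrization $\sigma=\omega\sin\theta\oplus\nu\cos\theta$ the gain integrand does depend on $\omega$, because $|v'|^2$ and $|v'_*|^2$ involve $(v+v_*)\cdot\omega$; the $\omega$-independence that collapses the $\sigma$-integral is not available pointwise in $v_*$, and is obtained cleanly only after the $v_*$-integration or, as the paper does, on the Fourier side, where $\widehat{\mu}(\xi^-)$ and $\widehat{\mu^{1/2}f}(\xi^+)$ depend only on $|\xi|\sin(\theta/2)$ and $|\xi|\cos(\theta/2)$ by radial symmetry (Lemma~\ref{3.lem.kn223}). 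Second, the pivotal spectral claim --- that the $\omega$-averaged gain operator acts on the radial level-$2l$ eigenfunctions of $\mathcal H$ as multiplication by $(\sin\theta)^{2l}$ --- is indeed true (it is exactly what the paper extracts from the Mehler identity \eqref{fork2}, and it is the classical Wang Chang--Uhlenbeck computation), but you give no argument for it in dimension $d$: ``evaluating the resulting Gaussian integrals via Mehler's formula'' is precisely the step that must be organised, either through \eqref{fork2} applied to the explicit Gaussian symbol of Lemma~\ref{prop1.2'} or through a generating-function computation on the radial eigenfunctions, and you acknowledge yourself that this is the main obstacle. As it stands, the proposal proves the easy estimate \eqref{new006} assuming the hard formula \eqref{2.regul}, but not \eqref{2.regul} itself.
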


\bigskip

\noindent
Collecting the two previous results and using the fact that $\mathbb{P}_{2k+1}f=0$ when $k \geq 0$ and $f \in \mathscr{S}_r(\rr^d)$, 
we recover in the radially symmetric case the spectral diagonalization obtained in \cite{wang} for the linearized Boltzmann operator: 

\bigskip

\begin{corollary}\label{th1.111bisbisbis}
When it acts on the function space $\mathscr{S}_{r}(\rr^d)$, the linearized non-cutoff Boltzmann operator with Maxwellian molecules 
$$\mathscr{L}f=-\mu^{-1/2}Q(\mu,\mu^{1/2}f)-\mu^{-1/2}Q(\mu^{1/2}f,\mu),$$
is equal to
$$\mathcal{L}=\sum_{k\ge 1}\Big(\int^{\frac{\pi}{4}}_{-\frac{\pi}{4}}\beta(\theta)\bigl(1-(\sin \theta)^{2k}-(\cos\theta)^{2k}\bigr)d\theta\Big) \mathbb P_{2k},$$
where $\beta$ is the function defined in \eqref{new001} and $\mathbb P_{k}$ are the spectral projections onto the Hermite basis described in Section \ref{6.sec.harmo}. Furthermore, the estimates
\begin{equation}\label{appeig}
\int^{\frac{\pi}{4}}_{-\frac{\pi}{4}}\beta(\theta)\bigl(1-(\sin \theta)^{2k}-(\cos\theta)^{2k}\bigr)d\theta \approx k^s\quad \text{when $k\rightarrow+\io$},
\end{equation}
 are satisfied and imply the following coercive estimates 
\begin{equation}\label{appeig1}
 \|\mathcal{H}^{\frac{s}{2}}(1-\mathbf{P}) f\|_{L^2}^2 \lesssim (\mathscr{L}f,f)_{L^2}\lesssim \|\mathcal{H}^{\frac{s}{2}}(1-\mathbf{P}) f\|_{L^2}^2,
\end{equation}
for $f \in \mathscr{S}_r(\rr^d)$, where $\mathcal{H}=-\Delta_v+\frac{|v|^2}{4}$ is the $d$-dimensional harmonic oscillator.
\end{corollary}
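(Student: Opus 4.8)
\emph{Proof plan.} The statement will be read off from Theorems~\ref{th1.1bis} and~\ref{th1.111bis} together with the eigenvalue asymptotics already obtained for the Kac operator in Theorem~\ref{th1.2}. First I would simply combine the two operator identities: Theorem~\ref{th1.1bis} gives $\mathcal{L}_1=\sum_{k\ge 1}\bigl(\int_{-\pi/4}^{\pi/4}\beta(\theta)(1-(\cos\theta)^k)\,d\theta\bigr)\mathbb{P}_k$ and Theorem~\ref{th1.111bis} gives $\mathcal{L}_2=-\sum_{l\ge 1}\bigl(\int_{-\pi/4}^{\pi/4}\beta(\theta)(\sin\theta)^{2l}\,d\theta\bigr)\mathbb{P}_{2l}$. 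On $\mathscr{S}_r(\rr^d)$ one has $\mathbb{P}_{2k+1}f=0$, so only even-index projections survive in $\mathcal{L}_1$; adding the two series index by index on the even indices produces $\mathcal{L}=\sum_{k\ge 1}\bigl(\int_{-\pi/4}^{\pi/4}\beta(\theta)(1-(\sin\theta)^{2k}-(\cos\theta)^{2k})\,d\theta\bigr)\mathbb{P}_{2k}$. The $k=1$ coefficient vanishes identically since $1-\cos^2\theta-\sin^2\theta=0$, consistently with $\lambda_0=\lambda_2=0$, so the effective sum runs over $k\ge 2$.

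For the asymptotics~\eqref{appeig}, observe that the integrals in question are exactly the even-index eigenvalues of a Kac-type operator, and that the function $\beta$ of~\eqref{new001} has the same non-integrable singularity $\beta(\theta)\approx|\theta|^{-1-2s}$ near $0$ as in~\eqref{ah1}. Hence the argument proving~\eqref{3.llkkb} in Theorem~\ref{th1.2} applies verbatim: splitting the integral near and away from $\theta=0$, using $1-(\cos\theta)^{2k}-(\sin\theta)^{2k}\approx 1-e^{-k\theta^2}$ for small $\theta$ and rescaling $\theta\mapsto\sqrt{k}\,\theta$ gives a two-sided bound of order $k^s$. I would moreover record that each such integral is strictly positive for $k\ge 2$, because $(\cos\theta)^{2k}+(\sin\theta)^{2k}<1$ on $(0,\pi/4)$; combined with the asymptotics this yields $\lambda_{2k}\approx\bigl(2k+\frac d2\bigr)^{s}$ with constants uniform in $k\ge 2$, where $2k+\frac d2$ is the eigenvalue of $\mathcal{H}$ on $\range\mathbb{P}_{2k}$.

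Finally, to deduce the coercive estimate~\eqref{appeig1}, I would diagonalize both quadratic forms. For $f\in\mathscr{S}_r(\rr^d)$, radiality kills the momentum components $v_j\mu^{1/2}$, so $\mathbf{P}f=(a+c|v|^2)\mu^{1/2}$; since $\mathrm{Span}\{\mu^{1/2},|v|^2\mu^{1/2}\}=\range\mathbb{P}_0\oplus\bigl(\text{radial part of }\range\mathbb{P}_2\bigr)$ is exactly where $\mathbb{P}_0 f$ and $\mathbb{P}_2 f$ lie, one obtains $\mathbf{P}f=\mathbb{P}_0 f+\mathbb{P}_2 f$, hence $(1-\mathbf{P})f=\sum_{k\ge 2}\mathbb{P}_{2k}f$. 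Then $(\mathscr{L}f,f)_{L^2}=\sum_{k\ge 2}\lambda_{2k}\|\mathbb{P}_{2k}f\|_{L^2}^2$ while $\|\mathcal{H}^{s/2}(1-\mathbf{P})f\|_{L^2}^2=\sum_{k\ge 2}\bigl(2k+\frac d2\bigr)^{s}\|\mathbb{P}_{2k}f\|_{L^2}^2$, so the term-by-term comparability $\lambda_{2k}\approx\bigl(2k+\frac d2\bigr)^s$ from the previous step yields both inequalities.

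The only genuinely delicate point is the identity $\mathbf{P}f=\mathbb{P}_0 f+\mathbb{P}_2 f$ on $\mathscr{S}_r(\rr^d)$, i.e. that the collisional invariants, restricted to radial functions, form precisely the zeroth plus second Hermite eigenspaces: this rests on the short computation $(|v|^2-d)\mu^{1/2}\in\range\mathbb{P}_2$ via the creation operators, together with an $O(d)$-invariance argument showing that the radial part of $\range\mathbb{P}_2$ is one-dimensional. Everything else reduces to bookkeeping on the already-established Hermite series identities and to the rescaling estimate borrowed from Theorem~\ref{th1.2}.
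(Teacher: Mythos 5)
Your proposal is correct and follows essentially the same route as the paper: the corollary is obtained by adding the spectral decompositions of Theorems~\ref{th1.1bis} and~\ref{th1.111bis}, using that $\mathbb{P}_{2k+1}f=0$ on $\mathscr{S}_r(\rr^d)$, and the estimates \eqref{appeig} and \eqref{appeig1} come from the same eigenvalue asymptotics (\eqref{6.asyvp}, \eqref{eig02}) and term-by-term diagonal comparison. Your explicit verification that $\mathbf{P}f=\mathbb{P}_0f+\mathbb{P}_2f$ on radial functions (and the strict positivity of the coefficients for $k\ge 2$) merely spells out details the paper leaves implicit in its one-line deduction.
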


\bigskip

\noindent
Let us mention that the results of Theorems~\ref{th1.1bis}, \ref{th1.111bis} and Corollary~\ref{th1.111bisbisbis} (except for \eqref{appeig} and \eqref{appeig1}) hold true as well for the cutoff case when $\beta$ is integrable. For definiteness, we shall now make the following choice for the cross section
\begin{equation}\label{new001'}
\beta(\theta)=|\SSS^{d-2}||\sin 2\theta|^{d-2}b(\cos 2\theta)=\frac{|\cos\frac{\theta}{2}|}{|\sin\frac{\theta}{2}|^{1+2s}}.
\end{equation}
With that choice, we get as before a more precise equivalent than in Corollary~\ref{th1.111bisbisbis}
$$\int^{\frac{\pi}{4}}_{-\frac{\pi}{4}}\beta(\theta)\bigl(1-(\sin \theta)^{2k}-(\cos\theta)^{2k}\bigr)d\theta \sim c_{0}(2k)^s,$$
when $k\rightarrow+\io$, where the positive constant $c_0>0$ is defined in (\ref{czero}).

\bigskip

\begin{theorem}\label{th1bis}
Under the assumption \eqref{new001'}, the linearized non-cutoff Boltzmann operator with Maxwellian molecules acting on the radially symmetric function space $\mathscr{S}_r(\rr^d)$ is equal to a pseudodifferential operator
$$\mathscr{L}f=l^w (v, D_v)f, \quad f \in \mathscr{S}_r(\rr^d),$$
whose Weyl symbol $l(v,\xi)$ is real-valued, belongs to the symbol class $\mathbf{S}^{s}(\rr^{2d})$ with the following asymptotic expansion: there exists a sequence of real numbers $(c_k)_{k \geq 1}$ such that $\forall N \geq 1,$
$$ \ l(v,\xi)\equiv
c_0 \Big(1+|\xi|^2+\frac{|v|^2}{4}\Big)^s-d_{0}+\sum_{k=1}^Nc_k\Big(1+|\xi|^2+\frac{|v|^2}{4}\Big)^{s-k} \mod \mathbf{S}^{s-N-1}(\rr^{2d}),$$
where $|\cdot|$ is the Euclidean norm and $c_0, d_{0}>0$ are the positive constants defined in \eqref{czero} and \eqref{dzero}.
\end{theorem}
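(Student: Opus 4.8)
The plan is to run the same argument as for Theorem~\ref{th1}, transported to the $d$-dimensional harmonic oscillator $\mathcal H=-\Delta_v+\frac{|v|^2}{4}$. By Corollary~\ref{th1.111bisbisbis} together with Theorems~\ref{th1.1bis} and~\ref{th1.111bis}, the operator $\mathscr L$ acting on $\mathscr S_r(\rr^d)$ coincides with $\mathcal L_1+\mathcal L_2$, both summands being diagonal in the Hermite basis, hence functions of $\mathcal H$. First I would dispose of $\mathcal L_2$: the bound $0\le-\mathcal L_2\le c_1\exp(-c_2\mathcal H)$ of Theorem~\ref{th1.111bis} forces the eigenvalues of $\mathcal L_2$ to decay faster than any power, so, exactly as for $\mathcal K_2$ in the discussion following Theorem~\ref{th1}, the operators $\mathcal H^N\mathcal L_2$ are all trace class and the Weyl symbol of $\mathcal L_2$ belongs to $\mathbf S^{-\infty}(\rr^{2d})$; this term contributes nothing to the asymptotic expansion. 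It remains to treat $\mathcal L_1$, and it suffices to produce a pseudodifferential operator in $\mathbf S^s(\rr^{2d})$ agreeing with $\mathcal L_1$ as an operator, since $\mathcal L_1+\mathcal L_2=\mathscr L$ on $\mathscr S_r(\rr^d)$.

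Since $(\sec\theta)^{d/2}\exp(-\mathcal H\ln(\sec\theta))$ acts on the degree-$k$ Hermite eigenspace of $\mathcal H$, where $\mathcal H$ equals $k+\frac d2$, as multiplication by $(\sec\theta)^{d/2}(\sec\theta)^{-(k+\frac d2)}=(\cos\theta)^k$, formula~\eqref{312n} reads $\mathcal L_1=g_0\big(\mathcal H+\tfrac{1-d}{2}\big)$ in the spectral calculus, where $g_0(\lambda)=\int_{-\pi/4}^{\pi/4}\beta(\theta)\big(1-(\cos\theta)^{\lambda-\frac12}\big)d\theta$ is the very same scalar function that occurs in the proof of Theorem~\ref{th1} (the cross section~\eqref{new001'} being identical to~\eqref{sing}), and $\mathcal H+\frac{1-d}{2}$ is self-adjoint on $L^2(\rr^d)$, positive with spectrum $\{k+\frac12:k\ge0\}$, and elliptic with Weyl symbol $|\xi|^2+\frac{|v|^2}{4}+\frac{1-d}{2}\in\mathbf S^1(\rr^{2d})$. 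The genuinely analytic input, already carried out in the proof of Theorem~\ref{th1}, is that $g_0$ is a symbol of order $s$ on $(0,+\infty)$, i.e. $|g_0^{(j)}(\lambda)|\lesssim\lambda^{s-j}$, and admits the asymptotic expansion $g_0(\lambda)\sim c_0\lambda^s-d_0+\sum_{k\ge1}c_k\lambda^{s-k}$ as $\lambda\to+\infty$, with $c_0=\frac{2^{1+s}}{s}\Gamma(1-s)>0$, $d_0>0$ as in~\eqref{dzero}, and $c_k\in\rr$; concretely one writes $g_0(\lambda)=\int_{-\pi/4}^{\pi/4}\beta(\theta)\big(1-e^{(\lambda-\frac12)\ln\cos\theta}\big)d\theta$ and performs a Laplace-type analysis near the singularity $\theta=0$ of $\beta$, the leading term coming from a $\Gamma$-integral of $1-e^{-\lambda\theta^2/2}$ against $\beta(\theta)\approx 2^{1+2s}|\theta|^{-1-2s}$, the constant from the regularized bulk contribution, and the corrections from the Taylor expansions of $\ln\cos\theta$ and of the smooth part of $\beta$, everything being stable under differentiation in $\lambda$.

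It then remains to pass from the scalar expansion to a symbolic one. Because $\mathcal A:=\mathcal H+\frac{1-d}{2}$ is a positive elliptic first-order operator in the Weyl calculus and $g_0$ is a symbol of order $s$ on a half-line, the pseudodifferential functional calculus for such operators (Section~4.4 of~\cite{nier}; alternatively built by hand from $g_0(\mathcal A)=\frac1{2i\pi}\oint\widetilde{g_0}(z)(\mathcal A-z)^{-1}dz$ with an almost-analytic extension $\widetilde{g_0}$ and the symbol expansion of the resolvent $(\mathcal A-z)^{-1}$) yields that $\mathcal L_1=g_0(\mathcal A)$ is a pseudodifferential operator whose Weyl symbol lies in $\mathbf S^s(\rr^{2d})$ and expands as $g_0(a_0)+\sum_{j\ge1}q_j$, where $a_0=|\xi|^2+\frac{|v|^2}{4}+\frac{1-d}{2}$ and $q_j\in\mathbf S^{s-j}$ is a universal polynomial expression in the derivatives of $g_0$ and $a_0$. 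Inserting the expansion of $g_0$ and re-expanding every term around the weight $1+|\xi|^2+\frac{|v|^2}{4}$, using that $\big(1+|\xi|^2+\frac{|v|^2}{4}+c\big)^\alpha-\big(1+|\xi|^2+\frac{|v|^2}{4}\big)^\alpha\in\mathbf S^{\alpha-1}(\rr^{2d})$ for all $c,\alpha\in\rr$, and collecting terms order by order, one obtains the claimed expansion of $l$ modulo $\mathbf S^{s-N-1}(\rr^{2d})$, with real coefficients $c_k$ ($k\ge1$) possibly readjusted by the calculus but $c_0$ and $d_0$ unchanged, since they are fixed by the top two orders; reality of $l$ comes from the self-adjointness of $\mathcal H$ and from $g_0$ being real, and adding back the $\mathbf S^{-\infty}$ symbol of $\mathcal L_2$ changes nothing. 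I expect the main obstacle to be precisely this last step: making the pseudodifferential functional calculus for the shifted harmonic oscillator rigorous and checking that the asymptotic expansion of $g_0(\mathcal A)$ is valid to all orders, which requires either quoting a clean functional-calculus theorem in the relevant symbol class or reconstructing the parametrix from the resolvent; the scalar asymptotic analysis, although it demands care for the exact constants, is essentially the one already performed for Theorem~\ref{th1}.
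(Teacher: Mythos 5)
Your strategy is genuinely different from the paper's. The paper never invokes a functional calculus: it computes the Weyl symbol of $\mathcal L_{1}$ in closed form from the Bobylev formula and an explicit Gaussian kernel computation (Lemma \ref{l1.1bis}, formula \eqref{310n}), observes that the dimension enters only through the harmless factor $\cos^{-2d}(\frac{\theta}{2})$, and then performs the asymptotic expansion directly on that explicit integral (Lemma \ref{prop3.1bis}), repeating for $\mathcal L_{2}$ the explicit computation that puts its symbol in $\mathbf S^{-\infty}(\rr^{2d})$ (Lemma \ref{prop1.2'}); Theorem \ref{th1bis} is then immediate. You instead want to read $\mathcal L_{1}$ as a spectral multiplier $g_{0}(\mathcal H+\frac{1-d}{2})$ and to invoke an abstract pseudodifferential functional calculus. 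That route is viable in principle, and it would have the merit of deducing the $d$-dimensional statement from purely one-dimensional scalar asymptotics, but as written it has a genuine gap.

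The gap is in the sentence claiming that the expansion of $g_{0}(\lambda)=\int_{-\pi/4}^{\pi/4}\beta(\theta)\bigl(1-(\cos\theta)^{\lambda-\frac12}\bigr)d\theta$ is ``already carried out in the proof of Theorem \ref{th1}''. It is not: what the paper expands there is the explicit Weyl symbol $l_{1}$, i.e. the different function $\lambda\mapsto\int\beta(\theta)\bigl[1-\sec^{2}(\frac{\theta}{2})e^{-2\tan^{2}(\frac{\theta}{2})(\lambda-1)}\bigr]d\theta$, while for the spectral multiplier itself the paper only proves the leading equivalent $\lambda'_{k}\sim c_{0}k^{s}$ in \eqref{6.asyvp}, with no lower-order terms and no derivative bounds. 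So the analytic heart of your argument --- a full expansion $g_{0}(\lambda)\sim c_{0}\lambda^{s}+e_{0}+\sum_{k\ge1}c'_{k}\lambda^{s-k}$ with symbol-type estimates $|g_{0}^{(j)}(\lambda)|\lesssim\lambda^{s-j}$, together with the identification $e_{0}=-d_{0}$ for the specific constant \eqref{dzero} --- is missing, not merely cited; the two integrals share the leading $\Gamma$-factor, but their constant terms agree only after an argument (for instance comparison with the $d=1$ computation through a calculus whose corrections are $O(\lambda^{s-1})$), which you do not make, and your phrase ``fixed by the top two orders'' silently assumes exactly what has to be checked. Two further steps are left as black boxes: the functional calculus for $g_{0}(\mathcal H+\frac{1-d}{2})$ in the global class $\mathbf S^{m}(\rr^{2d})$ (you acknowledge this; note that \cite{nier} is used in the paper only for powers of positive elliptic operators, and that the shift makes the symbol $|\xi|^{2}+\frac{|v|^{2}}{4}+\frac{1-d}{2}$ negative near the origin when $d\ge2$), and the claim that $0\le-\mathcal L_{2}\le c_{1}e^{-c_{2}\mathcal H}$ plus diagonality forces a Weyl symbol in $\mathbf S^{-\infty}(\rr^{2d})$, which requires polynomial-in-$l$ bounds on the Weyl symbols of the projections $\mathbb P_{2l}$ and all their derivatives --- the paper sidesteps this by computing $l_{2;d}$ explicitly in Lemma \ref{prop1.2'}. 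Until the expansion of $g_{0}$ (with the constant $-d_{0}$) and a precise functional-calculus statement are supplied, the proposal does not yet prove the theorem.
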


\bigskip

\noindent
This result shows that when acting on the function space $\mathscr{S}_{r}(\rr^d)$, the linearized non-cutoff Boltzmann operator with Maxwellian molecules is a pseudodifferential operator whose principal symbol is the same as for the  fractional harmonic oscillator
$$c_{0}\Big(1-\Delta_v+\frac{|v|^2}{4}\Big)^s.$$
For Maxwellian molecules, this accounts for the exact diffusive structure of the linearized non-cutoff radially symmetric Boltzmann operator and shows that this operator is equal to the fractional harmonic oscillator
$$c_{0}\Big(1-\Delta_v^2+\frac{|v|^2}{4}\Big)^s,$$
up to a bounded operator on $L^2(\rr^d)$. Let us mention that the phase space structure of the linearized non-cutoff Boltzmann was first investigated in~\cite{pao1,pao2}, but these results were somehow controversial (see remarks in~\cite{cer,klaus}). In these works, the linearized non-cutoff Boltzmann with Maxwellian molecules and $s=1/4$ in the assumption (\ref{sa1}) was shown to be a pseudodifferential operator whose symbol in the standard quantization satisfies to the following estimates
$$\exists c_1,c_2>0, \  \textrm{Re }p(v,\xi) > c_1(|\xi|^2+|v|^2)^{\frac{1}{4}}-c_2,$$
$$|p(v,\xi)| \lesssim \langle v \rangle^{\frac{1}{2}}\langle \xi \rangle^{\frac{1}{2}}, \quad \forall \alpha, \beta \in \nn^3, \ |\alpha|+|\beta| \geq 1, \  |\partial_v^{\alpha}\partial_{\xi}^{\beta}p(v,\xi)| \lesssim \langle (v,\xi) \rangle^{\frac{1}{2}}.$$ 
{}From a microlocal view point, these estimates are of a limited interest since the above estimates only point out that the symbol $p$ belongs to a gainless symbol class without any asymptotic calculus. In the radially symmetric case, the situation is much more favorable since the Weyl symbol of the linearized non-cutoff Boltzmann operator with Maxwellian molecules belongs to $\mathbf{S}^{s}(\rr^{2d})$ which is a standard symbol class enjoying nice symbolic calculus (see Lemma~2.2.18 in~\cite{birkhauser}). Indeed, the function space $\mathbf{S}^{m}(\rr^{2d})$ which writes with H\"ormander's convention as
$$S\Big(\langle (v,\xi) \rangle^{2m},\frac{|dv|^2+|d\xi|^2}{\langle (v,\xi) \rangle^2} \Big),$$
is a symbol class with gain $\lambda=\langle (v,\xi) \rangle^2$ in the symbolic calculus
$$\underbrace{a_1}_{\in S^{m_1}} \sharp^w \underbrace{a_2}_{\in S^{m_2}} =\underbrace{a_1 a_2}_{\in S^{m_1+m_2}} +\frac{1}{2i}\underbrace{\{a_1,a_2\}}_{\in S^{m_1+m_2-1}}+...$$
As for the linearized non-cutoff Kac operator, the two operators $\mathcal{L}_1$ and $\mathcal{L}_2$ defined in Theorem~\ref{th1.1bis}, \ref{th1.111bis} account very differently in the
way the operator $l^w(v,D_v)$ acts on functions. The first part $\mathcal{L}_1$ is a pseudodifferential operator whose Weyl symbol accounts for all the asymptotic expansion of the symbol $l$, 
$$l_1(v,\xi) \sim c_0\Big(1+\vert\xi|^2+\frac{|v|^2}{4}\Big)^{s}-d_{0}+\sum_{k=1}^{+\infty}c_k\Big(1+\vert\xi|^2+\frac{|v|^2}{4}\Big)^{s-k},$$
whereas the symbol of the operator $\mathcal{L}_2$ belongs to the class $S^{-\infty}(\rr^{2d})$. This shows that $\mathcal{L}_2$ is a smoothing operator in any direction of the phase space
$$\|\langle v \rangle^{N_1}\mathcal{L}_2f\|_{H^{N_2}(\rr^d)} \lesssim \|f\|_{L^2(\rr^d)},$$
for all $N_1,N_2 \in \nn$, $f \in \mathscr{S}(\rr^d)$ and that $\mathcal L_{2}$ defines a compact operator on $L^2(\rr^d)$.

All the results about the linearized non-cutoff radially symmetric Boltzmann operator are a byproduct of the analysis of the linearized non-cutoff Kac operator. More specifically, the article is organized as follows. Section~\ref{proofkac} is devoted first to some reminders about the Mehler formula and to the proofs of the results for the linearized non-cutoff Kac operator. The link between the Kac operator and the radially symmetric Boltzmann operator together with the proofs of the results concerning the radially symmetric Boltzmann operator are given in Section~\ref{radially}. The appendix in Section~\ref{appendix} provides a useful lemma to handle singular kernels, some formulas for the collision operators and the statement of Bobylev formulas used in the previous sections.

\section{Proof of the results}\label{proofkac}

\subsection{The Mehler formula}\label{mehl}
We begin by recalling the Mehler formula which will play an important r\^ole in our analysis. The Mehler formula provides an explicit formula for the Weyl symbol of the semigroup generated by the harmonic oscillator
$$\mathcal{H}=-\Delta_v+\frac{\val v^2}{4}.$$
Let
$$ (v,\xi) \in \R^d\times\R^d \mapsto q(v,\xi)\in \C,$$
be a complex-valued quadratic form with a positive definite real part $\textrm{Re }q \gg0$. Associated to this quadratic symbol is the Hamilton map $F \in M_{2d}(\cc)$ uniquely defined by the identity
$$q\big{(}(v,\xi);(y,\eta) \big{)}=\sigma \big{(}(v,\xi),F(y,\eta) \big{)}, \quad (v,\xi) \in \rr^{2d},\  (y,\eta) \in \rr^{2d},$$
where $q(\cdot;\cdot)$ stands for the polarized form
associated to $q$ and $\sigma$ is the canonical
symplectic form on $\rr^{2d}$,
$$\sigma \big{(}(v,\xi),(y,\eta) \big{)}=\xi \cdot y-v\cdot \eta, \quad (v,\xi) \in \rr^{2d},\  (y,\eta) \in \rr^{2d}.$$
The differential operator defined by  the Weyl quantization of the quadratic symbol~$q$,
$$q^w(v,D_v)u(v)=\frac{1}{(2\pi)^d}\int_{\rr^{2d}}e^{i (v-y) \cdot \xi}q\Big(\frac{y+v}{2},\xi\Big)u(y)dyd\xi,$$
equipped with the domain
$$D(q)=\big\{u \in L^2(\rr^d) :  q^w(v,D_v) u \in L^2(\rr^d)\big\},$$
is maximally accretive
$$\textrm{Re}\big(q^w(v,D_v)u,u\big)_{L^2} \geq 0, \quad u \in D(q).$$
This operator generates a contraction semigroup $(e^{-tq^w})_{t \geq 0}$ whose Weyl symbol
$$e^{-tq^w}=p_t^w(v,D_v),$$
is given by the Mehler formula \cite{mehler} (Theorem~4.2),
$$p_t(X)=\frac{\exp(-\sigma\big(X,\tan(tF)X))}{\sqrt{\det(\cos tF)}} \in \mathscr{S}(\rr^{2d}), \quad X=(v,\xi) \in \rr^{2d},$$
for any $t>0$. The Weyl symbol of the one-dimensional harmonic oscillator $\mathcal{H}$ is
$$q(v,\xi)=\xi^2+\frac{v^2}{4}.$$
A direct computation shows that its Hamilton map
$$F=\begin{pmatrix}
0 & 1 \\
-\frac{1}{4} & 0
\end{pmatrix},$$
satisfies 
$$F^2=-\frac{1}{4}I_{2d},\quad \cos(tF)= \cosh\Big(\frac{t}{2}\Big)I_{2d}, \quad \sin(tF)=2\sinh\Big(\frac{t}{2}\Big)F,$$
where $I_{2d}$ stands for the identity matrix.
This implies that
$$\tan(tF)=2\tanh\Big(\frac{t}{2}\Big)F, \quad \sqrt{\det(\cos tF)}=\cosh\Big(\frac{t}{2}\Big).$$
Moreover, we have
{\small\begin{align*}
\sigma\big(X,\tan(tF)X)\big)=2\tanh\Big(\frac{t}{2}\Big)\sigma(X,FX)
= {2}\tanh\Big(\frac{t}{2}\Big)q(X)=2\tanh\Big(\frac{ t}{2}\Big)\Big(\xi^2+\frac{v^2}{4}\Big).
\end{align*}}\noindent
By tensorization, we deduce that the Weyl symbol of the semigroup 
$$\exp{-t\mathcal H}= p_{t}^w(v,D_{v}),$$
generated by the $d$-dimensional harmonic oscillator $\mathcal{H}=-\Delta_v+\frac{|v|^2}{4}$ is given by
\begin{equation}\label{mehlerformula}
p_t(v,\xi)=\frac{\exp\big[-2\tanh(\frac{ t}{2})(|\xi|^2+\frac{|v|^2}{4})\big]}{\cosh^d(\frac{ t}{2})},
\end{equation}
for any $t \geq 0$. According to \eqref{6.harmo}, we may write
$$p_{t}^w(v,D_{v})=\sum_{k\ge 0}e^{-t(k+\frac d2)}\mathbb P_{k}.$$
Setting $z=\tanh(\frac{t}{2})$, we have
$$\frac t2=\tanh^{-1}z=\frac12\ln\Big(\frac{1+z}{1-z}\Big),\quad \cosh\Big(\frac{t}{2}\Big)=\cosh(\tanh^{-1}z)=\frac{1}{\sqrt{1-z^2}}.$$
Following \cite{unter} (p. 204-205), we obtain the following formula as $L^2(\R^d)$-bounded operators
\begin{equation}\label{6.knb44}
\left[\exp-\Bigl(2z\Bigl(\val \xi^2+\frac{\val v^2}{4}\Bigr)\Bigr)\right]^w=\frac{1}{(1+z)^d}\sum_{k\ge 0}\Big(\frac{1-z}{1+z}\Big)^k\mathbb P_{k},
\end{equation}
holding for any $z\in \C,\ \re z \ge 0$, the latter condition ensuring that $\val{\frac{1-z}{1+z}}\le 1$.
We may rewrite \eqref{mehlerformula} as 
\begin{equation}\label{6.ml2cc}
\left[\exp-\Bigl(2z\Bigl(\val \xi^2+\frac{\val v^2}{4}\Bigr)\Bigr)\right]^w
=\frac{1}{(1-z^2)^{\frac{d}{2}}}\exp\Big(\mathcal H\ln\frac{1-z}{1+z}\Big),
\end{equation}
when $\ \val z<1,\ \re z\ge 0$.
Notice that the condition $\val z<1$ ensures that $\textrm{Re}(\frac{1-z}{1+z})>0$
and $\re z\ge 0$
that $\re(\ln\frac{1-z}{1+z})\le 0$.
On the other hand, the identity \eqref{6.knb44} provides for $z=1$,
\begin{equation}\label{proj0}
\mathbb P_{0}=\Big[2^de^{-2(\val \xi^2+\frac{\val v^2}{4})}\Big]^w.
\end{equation}
We can reformulate \eqref{6.ml2cc} as
\begin{equation}\label{6.ln115}
\exp -2\mathcal H \zeta=(1-\tanh^2 \zeta)^{\frac{d}{2}}\Big[e^{-2(\val \xi^2+\frac{\val v^2}4)\tanh \zeta}
\Big]^w,
\end{equation}
for any for $\zeta\in\C$ such that $\re \zeta\ge 0, \val{\im \zeta}<\frac{\pi}{2}$. For the present analysis of the linearized non-cutoff Kac operator, additional identities linked to the Mehler formula are needed. 
For $\theta\in (-\frac{\pi}{2},\frac{\pi}{2})$, we set
$$t=2\tanh^{-1}\Big(\tan^2\Big(\frac{\theta}{2}\Big)\Big).$$
By using that 
$$\tanh^{-1}\zeta=\frac12\ln\Big(\frac{1+\zeta}{1-\zeta}\Big),$$
we obtain that 
$$t=\ln\Big(\frac{1+\tan^2(\frac{\theta}{2})}{1-\tan^2(\frac{\theta}{2})}\Big)=\ln \Bigl(\frac{(\cos \frac{\theta}{2})^{-2}}{2-(\cos \frac{\theta}{2})^{-2}}\Bigr)=\ln \Bigl(\frac{1}{\cos \theta}\Bigr)$$
and
$$\cosh\Big(\frac{t}{2}\Big)=\frac12\bigl((\cos\theta)^{-\frac{1}{2}}+(\cos\theta)^{\frac{1}{2}}\bigr)=\frac{\cos^2(\frac{\theta}{2})}{(\cos\theta)^{\frac{1}{2}}}.$$
As a result, it follows that
\begin{align*}
\left[\frac{e^{-2\tan^2(\frac{\theta}{2})(\val \xi^2+\frac{|v|^2}{4})}}{\cos^{2d}(\frac{\theta}{2})}\right]^w
&=\left[\frac{e^{-2\tanh(\frac{t}{2})(\val \xi^2+\frac{|v|^2}{4})}}{\cosh^d(\frac{t}{2})\cos^{2d}(\frac{\theta}{2})}
\right]^w\frac{\cos^{2d}(\frac{\theta}{2})}{(\cos \theta)^{\frac{d}{2}}}\\
&=\left[\frac{e^{-2\tanh(\frac{t}{2})(\val \xi^2+\frac{|v|^2}{4})}}
{\cosh^d(\frac{t}{2})}\right]^w(\cos \theta)^{-\frac{d}{2}}.
\end{align*}
Then, we deduce from \eqref{mehlerformula} that for any $\val \theta<\frac{\pi}{2}$,
\begin{equation}\label{6.444mm}
(\cos \theta)^{-\frac{d}{2}}\exp-\Bigl(\mathcal H\ln\Big(\frac1{\cos\theta}\Big)\Bigr)
=\left[\frac{e^{-2\tan^2(\frac{\theta}{2})(\val \xi^2+\frac{|v|^2}{4})}}
{\cos^{2d}(\frac{\theta}{2})}
\right]^w.
\end{equation}
For $0<\theta\le \frac{\pi}{2}$, we have 
$$z=\frac{\cos^{2}\theta}{(1+\sin\theta)^2}, \quad  \frac{1-z}{1+z}=\sin \theta,\quad 1-z^{2}=\frac{4\sin\theta}{(1+\sin\theta)^2}.$$
The formula \eqref{6.ml2cc} provides that
$$(1+\sin\theta)^{-d}\Big[e^{-2\frac{\cos^{2}\theta}{(1+\sin\theta)^2}({\val \xi}^2+\frac{|v|^2}{4})}\Big]^{w}=\frac{1}{2^d({\sin\theta})^{\frac{d}{2}}}\exp{\mathcal H\ln(\sin\theta)}=\frac1{2^d}\sum_{k\ge 0}(\sin \theta)^{k}\mathbb P_{k},$$
for any $0<\theta\le \frac{\pi}{2}$. This formula extends by analytic continuation 
$$(1+\sin\theta)^{-d}\Big[e^{-2\frac{\cos^{2}\theta}{(1+\sin\theta)^2}({\val \xi}^2+\frac{|v|^2}{4})}\Big]^{w}=\frac1{2^d}\sum_{k\ge 0}(\sin \theta)^{k}\mathbb P_{k},$$
for any $\val\theta<\frac{\pi}{2}$.
We deduce from \eqref{proj0} that for any $\val\theta<\frac{\pi}{2}$,
\begin{equation}\label{fork22}
\frac{\Big[e^{-2\frac{\cos^{2}\theta}{(1+\sin\theta)^2}({\val \xi}^2+\frac{|v|^2}{4})}\Big]^{w}}{(1+\sin\theta)^{d}}+
\frac{\Big[e^{-2\frac{\cos^{2}\theta}{(1-\sin\theta)^2}({\val \xi}^2+\frac{|v|^2}{4})}\Big]^{w}}{(1-\sin\theta)^{d}}=
\frac{1}{2^{d-1}}
\sum_{l\ge 0}(\sin \theta)^{2l}\mathbb P_{2l}
\end{equation}
and 
\begin{multline}\label{fork2}
2^{d-1}\frac{\Big[e^{-2\frac{\cos^{2}\theta}{(1+\sin\theta)^2}({\val \xi}^2+\frac{|v|^2}{4})}\Big]^{w}}{(1+\sin\theta)^{d}}+2^{d-1}\frac{\Big[e^{-2\frac{\cos^{2}\theta}{(1-\sin\theta)^2}({\val \xi}^2+\frac{|v|^2}{4})}\Big]^{w}}{(1-\sin\theta)^{d}}\\
-2^{d}\Big[e^{-2({\val \xi}^2+\frac{|v|^2}{4})}\Big]^{w}=\sum_{l\ge 1}(\sin \theta)^{2l}\mathbb P_{2l}.
\end{multline}

\subsection{Study of the linearized operator $\mathcal{K}_1$}\label{l1}
We study the first part of the linearized non-cutoff Kac operator
$$\mathcal{K}_{1}u=-\mu^{-1/2}K(\mu,\mu^{1/2}u),$$
defined in (\ref{linear-K1}) as a pseudodifferential operator given by the Weyl quantization of a symbol $l_{1}$.

\bigskip

\begin{lemma}\label{l1.1cutoff}
The Weyl symbol of the operator $\mathcal{K}_{1}$ is equal to
\begin{equation}\label{514jh}
l_{1}(v,\xi)=\int_{|\theta| \leq \frac{\pi}{4}}\beta(\theta)\Big[1-\sec^{2}\Big(\frac\theta2\Big)\exp\Big(-2 \tan^2\Big(\frac{\theta}{2}\Big)\Big(\xi^2+\frac{v^2}{4}\Big)\Big)\Big]d\theta.
\end{equation}
Furthermore, the operator $\mathcal K_{1}$ is equal to 
\begin{equation}\label{44dfs}
\mathcal K_{1}=\int_{\val \theta\le \frac{\pi}{4}}
\beta(\theta)
\left[1-(\sec \theta)^{\frac{1}{2}}\exp-\big(\mathcal H\ln\bigl({\sec\theta}\bigr)\big)\right]d\theta,
\end{equation}
is diagonal in the Hermite basis
\begin{equation}\label{44dfo}
\mathcal K_{1}=\sum_{k\ge 1}\Big(\int_{\val \theta\le \frac{\pi}{4}}
\beta(\theta)\bigl(1-(\cos \theta)^k\bigr)d\theta\Big) \mathbb P_{k},
\end{equation}
where
\begin{equation}\label{UU}
\int_{\val \theta\le \frac{\pi}{4}}\beta(\theta)\bigl(1-(\cos \theta)^k\bigr)d\theta
\approx k^{s},
\end{equation}
when $k\rightarrow+\io$.
\end{lemma}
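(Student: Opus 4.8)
The plan is to reduce $\mathcal K_1$ to a $\beta$-superposition of explicit Gaussian operators, to identify each of these with a rescaled harmonic–oscillator semigroup via the Mehler formula already established above, and then to read off the diagonalization and the large-$k$ asymptotics.

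First I would expand $\mathcal K_1u(v)=-\mu^{-1/2}(v)K(\mu,\mu^{1/2}u)(v)$ from the definition \eqref{holl2} with the collision relations \eqref{prepostk}. In the loss term the weight $\mu^{-1/2}(v)\mu(v_*)\mu^{1/2}(v)$ reduces to $1$, and since $\int_\rr\mu(v_*)\,dv_*=1$ this part contributes simply $u(v)$; in the gain term, conservation of kinetic energy $v'^2+v_*'^2=v^2+v_*^2$ lets one rewrite $\mu^{-1/2}(v)\mu(v_*')\mu^{1/2}(v')=(2\pi)^{-1/2}e^{-v^2/4-v_*^2/2+v'^2/4}$. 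After the change of variable $w=v'=v\cos\theta-v_*\sin\theta$ (so $dv_*=dw/|\sin\theta|$) this gives
$$\mathcal K_1u(v)=\int_{|\theta|\le\frac\pi4}\beta(\theta)\big(u(v)-B_\theta u(v)\big)\,d\theta,\qquad B_\theta u(v)=\int_\rr k_\theta(v,w)u(w)\,dw,$$
with $k_\theta(v,w)=\frac{(2\pi)^{-1/2}}{|\sin\theta|}\,e^{-\frac{v^2}{4}+\frac{w^2}{4}-\frac{(v\cos\theta-w)^2}{2\sin^2\theta}}$. Although $\int\beta(\theta)u(v)\,d\theta$ and $\int\beta(\theta)B_\theta u\,d\theta$ diverge separately, their difference converges absolutely because $\mathrm{Id}-B_\theta$ vanishes to second order at $\theta=0$ while $\beta(\theta)\approx|\theta|^{-1-2s}$ with $s<1$; this is the setting of Lemma~\ref{6.lem.defkac} and Lemma~\ref{new003}, which guarantee that the right-hand side is the finite-part integral defining $\mathcal K_1u$ for $u\in\mathscr S(\rr)$.

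Next I would compute the Weyl symbol $a_\theta$ of $B_\theta$ via $a_\theta(v,\xi)=\int_\rr k_\theta(v+\tfrac y2,v-\tfrac y2)\,e^{-iy\xi}\,dy$. Expanding the exponent, the $vy$ cross-terms cancel, and the half-angle identities $\frac{(1-\cos\theta)^2}{\sin^2\theta}=\tan^2\frac\theta2$, $\frac{(1+\cos\theta)^2}{\sin^2\theta}=\cot^2\frac\theta2$ leave a single Gaussian integration in $y$, yielding
$$a_\theta(v,\xi)=\sec^2\Big(\frac\theta2\Big)\exp\Big(-2\tan^2\Big(\frac\theta2\Big)\Big(\xi^2+\frac{v^2}{4}\Big)\Big).$$
Integrating against $\beta$ proves \eqref{514jh}. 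Moreover $a_\theta$ is precisely the Weyl symbol appearing in the Mehler identity \eqref{6.444mm} for $d=1$, which identifies $B_\theta=(\sec\theta)^{1/2}\exp\!\big(-\mathcal H\ln(\sec\theta)\big)$ (for $|\theta|\le\frac\pi4$ one has $\ln\sec\theta\in[0,\ln\sqrt2]$, so no analytic continuation is needed), proving \eqref{44dfs}. The interchange of $\int d\theta$ with the Weyl quantization is legitimate since $(\mathrm{Id}-B_\theta)u=O(\theta^2)$ in $\mathscr S(\rr)$ near $\theta=0$. For the diagonalization I would feed the spectral decomposition $\exp(-t\mathcal H)=\sum_{k\ge0}e^{-t(k+\frac12)}\mathbb P_k$ with $t=\ln\sec\theta$ into \eqref{44dfs}: then $(\sec\theta)^{1/2}\exp(-\mathcal H\ln\sec\theta)=\sum_{k\ge0}(\cos\theta)^k\mathbb P_k$, hence $\mathcal K_1=\sum_{k\ge0}\big(\int_{|\theta|\le\frac\pi4}\beta(\theta)(1-(\cos\theta)^k)\,d\theta\big)\mathbb P_k$, and since the $k=0$ coefficient vanishes this is \eqref{44dfo} (consistently with $\mathcal K_1\mu^{1/2}=0$). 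For the asymptotics \eqref{UU} I would split at a small $\delta>0$: on $\delta\le|\theta|\le\frac\pi4$ the factor $1-(\cos\theta)^k$ is bounded and $\beta\in L^1$, contributing $O(1)$; on $|\theta|\le\delta$ one uses $\beta(\theta)\approx|\theta|^{-1-2s}$ together with the two-sided bound $1-e^{-k\theta^2/2}\le 1-(\cos\theta)^k\le 1-e^{-c_\delta k\theta^2}$ coming from $-c_\delta\theta^2\le\ln\cos\theta\le-\tfrac12\theta^2$, and the rescaling $\theta=u/\sqrt k$ turns $\int_0^\delta\theta^{-1-2s}(1-e^{-\lambda k\theta^2})\,d\theta$ into $\lambda^s k^s\int_0^{\delta\sqrt{\lambda k}}u^{-1-2s}(1-e^{-u^2})\,du$, whose integral converges to a finite positive constant as $k\to\io$ exactly because $0<s<1$. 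This yields $\int_{|\theta|\le\frac\pi4}\beta(\theta)(1-(\cos\theta)^k)\,d\theta\approx k^s$.

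The main obstacle throughout is the non-integrable singularity of $\beta$ at $\theta=0$: writing $\mathcal K_1$ as a difference of divergent integrals, commuting the $\theta$-integral with the Weyl quantization and with the spectral sum, and running the asymptotics all have to be arranged so that $\beta$ is only ever integrated against a quantity vanishing to order $2$ at $\theta=0$ — which is compensated precisely by the hypothesis $s<1$. By comparison, the Gaussian symbol computation, though the longest step, is routine once energy conservation and the half-angle substitutions are in hand.
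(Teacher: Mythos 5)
Your proposal is correct, and its core is the same as the paper's: compute the Gaussian kernel of the $\theta$-fixed gain operator, pass to its Weyl symbol (the $vy$ cross-terms cancel and the half-angle identities give $\sec^2(\frac\theta2)e^{-2\tan^2(\frac\theta2)(\xi^2+\frac{v^2}4)}$, i.e.\ \eqref{514jh}), identify it through the Mehler identity \eqref{6.444mm} with $(\sec\theta)^{1/2}\exp(-\mathcal H\ln\sec\theta)$, and then read off \eqref{44dfo} from the spectral decomposition of $\mathcal H$. You deviate in two local respects. First, you obtain the kernel by a direct physical-space computation (energy conservation plus the substitution $w=v'$, $dv_*=dw/|\sin\theta|$) instead of the paper's route through the Bobylev formula (Lemma~\ref{prop2}) and Fourier inversion; your kernel $\frac{(2\pi)^{-1/2}}{|\sin\theta|}e^{-\frac{v^2}4+\frac{w^2}4-\frac{(v\cos\theta-w)^2}{2\sin^2\theta}}$ looks different from the paper's $\frac{(2\pi)^{-1/2}}{|\sin\theta|}e^{\frac{v^2}4-\frac{w^2}4-\frac{(v-w\cos\theta)^2}{2\sin^2\theta}}$, but expanding the squares shows the two exponents agree identically, so the operators coincide. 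Your route is more elementary; the paper's Bobylev route has the advantage of being reused verbatim for the radially symmetric Boltzmann operator (Lemma~\ref{3.lem.kn223}). Second, for \eqref{UU} you give a self-contained splitting-and-rescaling argument ($\theta=u/\sqrt k$ together with $-c_\delta\theta^2\le\ln\cos\theta\le-\frac{\theta^2}2$), which proves the two-sided bound directly under the general assumption \eqref{ah1}; the paper instead cites the appendix computation \eqref{6.asyvp}, which is carried out for the specific cross section \eqref{sing} and in exchange yields the precise constant $c_0$ needed later for \eqref{44dfk}. Your handling of the singular $\theta$-integrals (evenness, vanishing at $\theta=0$, Lemma~\ref{new003}) and of the interchanges with the quantization and the spectral sum is at the same level of rigor as the paper's, so the argument stands as written.
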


\bigskip

\noindent
Notice that the functions of $\theta$ inside the integrals factoring $\beta$ are even, vanish at 0 and are smooth on the compact interval of integration. Lemma~\ref{new003} may therefore be applied and the symbol $l_{1}$ is indeed given by a Lebesgue integral.

\bigskip

\begin{proof}
Let $u$ be in the Schwartz space $\mathscr{S}(\rr)$.
It follows from the Bobylev formula (Lemma~\ref{prop2}) and the Fourier inversion formula that
\begin{multline*}
-\mu^{-1/2}K(\mu,\mu^{1/2}u)(v)\\=
\frac{e^{\frac{v^2}{4}}}{(2 \pi)^{\frac{3}{4}}}
\iint_{\rr\times(-\frac\pi{4},\frac\pi{4})}
\beta(\theta) \left[\widehat \mu(0)\widehat{\mu^{1/2}u}(\eta)-\widehat{\mu}(\eta \sin{\theta})\widehat{\mu^{1/2}u}(\eta\cos{\theta})\right]e^{iv\eta}d\eta d\theta.
\end{multline*}
Recalling the formula written in the $d$-dimensional case for future reference
\begin{equation}\label{eq4}
\widehat{\big(e^{-\frac{\alpha}{2} \val v^2}\big)}(\xi)=\int_{\R^{d}}e^{-\frac{\alpha}{2} \val v^2}e^{-iv\cdot\xi}dv=\frac{(2\pi)^{\frac{d}{2}}}{\alpha^{\frac{d}{2}}}e^{-\frac{\val\xi^2}{2\alpha}},
\end{equation}
when $\alpha>0$, we notice that $\widehat{\mu}(\xi)=e^{-\frac{\val\xi^2}{2}}$.
It follows that
\begin{align*}
&-\mu^{-1/2}K(\mu,\mu^{1/2}u)(v)\\
&\hs =\frac{1}{2 \pi}\iint_{\rr\times(-\frac\pi{4},\frac\pi{4})}\beta(\theta)\left(\int_{\R}e^{\frac{v^2-y^2}{4}}\left[e^{-iy\eta}-e^{-\frac{\eta^2\sin^2 \theta}{2}}e^{-iy\eta\cos{\theta}}\right]e^{iv\eta}u(y)dy\right)d\eta d\theta\\
&\hs =\int_{\val \theta\le \frac{\pi}{4}}\beta(\theta)(\mathcal K_{1,\theta} u)(v) d\theta,
\end{align*}
where the distribution-kernel of the operator $\mathcal K_{1,\theta}$ is  given by
the oscillatory integral
\begin{align*}
\mathfrak{K}_{1,\theta}(v,y)=& \ \frac{1}{2 \pi}\int_{\rr}e^{\frac{v^2-y^2}{4}} \left[e^{-iy\eta}-e^{-\frac{\eta^2\sin^2 \theta}{2}}e^{-iy\eta\cos{\theta}}\right]e^{iv\eta}
d\eta\\
 =& \ \delta_{0}(v-y)-\frac{1}{2 \pi}e^{\frac{v^2-y^2}{4}} \int_{\rr}e^{-\frac{\eta^2\sin^2 \theta}{2}}e^{-iy\eta\cos{\theta}}e^{iv\eta}d\eta\\
 =& \ \delta_{0}(v-y)-\frac{e^{\frac{v^2-y^2}{4}}}{\sqrt{2\pi} \val{\sin\theta}}\exp{-\frac{(v-y\cos \theta)^2}{2\sin^2\theta}}.
\end{align*}
Since
$$\mathfrak{K}_{1,\theta}\Big(v-\frac y2,v+\frac y2\Big)=\delta_{0}(y)-\frac{e^{-\frac{vy}{2}}}{\sqrt{2\pi} \val{\sin\theta}}
\exp{-\frac{(v-\frac y2-(v+\frac y2)\cos \theta)^2}{2\sin^2\theta}},$$
we deduce from \eqref{4.lk555} that the Weyl symbol of the operator $\mathcal K_{1,\theta}$ denoted $l_{1,\theta}$ writes as 
$$l_{1,\theta}(v,\xi)=1-\ell_{1,\theta}(v,\xi),$$
where
$$\ell_{1,\theta}(v,\xi)=\int_{\rr} e^{iy\xi}\frac{1}{\sqrt{2\pi}\val{\sin\theta}}\exp{-\frac{(v-\frac y2-(v+\frac y2)\cos \theta)^2+vy\sin^2\theta}{2\sin^2\theta}}dy.
$$
The numerator of the fraction in the exponential is
$$\Big[v(1-\cos \theta)-\frac y2(1+\cos \theta)\Big]^2+vy \sin^2\theta=4\sin^4\Big(\frac{\theta}{2}\Big)v^2+\cos^4\Big(\frac{\theta}{2}\Big)y^2,$$
so that the quadratic form in the variables $v,y$ is positive definite for any $0<\val \theta\le \frac{\pi}{4}$.
This leads to
\begin{align*}
\ell_{1,\theta}(v,\xi)&=
\frac{1}{\sqrt{2\pi}\val{\sin\theta}}
e^{-\frac{\sin^4(\frac{\theta}{2})v^2}{2\sin^2(\frac{\theta}{2})\cos^2(\frac{\theta}{2})}}\int_{\rr} e^{iy\xi}\exp{-\frac{\cos^4(\frac{\theta}{2})y^2}{8\sin^2(\frac{\theta}{2})\cos^2(\frac{\theta}{2})}} dy\\
&=\frac{1}{\cos^2(\frac{\theta}{2})}\exp{-\Big(2\tan^2\Big(\frac{\theta}{2}\Big)\Big(\xi^2+\frac{v^2}4\Big)\Big)}
\end{align*}
and
$$l_{1,\theta}(v,\xi)=1-\frac{\exp{-(2\tan^2(\frac{\theta}{2})(\xi^2+\frac{v^2}4))}}{ \cos^2(\frac{\theta}{2})}.$$
This proves \eqref{514jh}. Applying \eqref{6.444mm} in the one-dimensional case provides the formula \eqref{44dfs}.
Furthermore, \eqref{44dfo} and \eqref{UU} follow from \eqref{6.harmo} and \eqref{6.asyvp}. This ends the proof of Lemma 
\ref{l1.1cutoff}.
\end{proof}

\bigskip

\begin{lemma}\label{l1.1cutoff'}
When the cross section is given by \eqref{sing}, the Weyl symbol $l_1$ of the first part of the linearized non-cutoff Kac operator $\mathcal{K}_{1}$ belongs to the symbol class $\mathbf{S}^s(\rr^2)$. Furthermore, we have the following asymptotic equivalent
\begin{equation}\label{44dfk}
\int_{\val \theta\le \frac{\pi}{4}}\beta(\theta)\bigl(1-(\cos \theta)^k\bigr)d\theta
\sim \frac{2^{1+s}}{s}\Gamma(1-s)k^s
\quad\text{ when \ $k\rightarrow+\io$}.
\end{equation}
\end{lemma}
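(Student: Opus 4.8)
The plan is to start from the explicit formula \eqref{514jh} for the Weyl symbol $l_1$ proved in Lemma~\ref{l1.1cutoff} and to reduce both assertions to elementary one-variable estimates. First, since the function of $\theta$ inside that integral is even, I would perform the change of variables $u=\tan^2(\theta/2)$ on $(0,\pi/4]$, for which $\sec^2(\theta/2)=1+u$, $\tan^2(\theta/2)=u$, $\cos\theta=\tfrac{1-u}{1+u}$, and $\beta(\theta)\,d\theta=(1+u)^{s-1}u^{-s-1}\,du$ under the choice \eqref{sing}. This produces $l_1(v,\xi)=\phi\bigl(\xi^2+\tfrac{v^2}{4}\bigr)$ with
$$\phi(\rho)=2\int_0^{u_0}\frac{(1+u)^{s-1}}{u^{s+1}}\bigl(1-(1+u)e^{-2u\rho}\bigr)\,du,\qquad u_0=\tan^2(\pi/8)=(\sqrt2-1)^2,$$
the integral being absolutely convergent because the bracket is $O(u)$ as $u\to 0^+$. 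The same substitution rewrites the quantity $\lambda_k:=\int_{|\theta|\le\pi/4}\beta(\theta)\bigl(1-(\cos\theta)^k\bigr)\,d\theta$ to be estimated in \eqref{44dfk} as $\lambda_k=2\int_0^{u_0}\frac{(1+u)^{s-1}}{u^{s+1}}\bigl(1-(\tfrac{1-u}{1+u})^k\bigr)\,du$.

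To prove $l_1\in\mathbf S^s(\rr^2)$, the crucial ingredient is the family of one-variable estimates $|\phi^{(j)}(\rho)|\le C_j(1+\rho)^{s-j}$, valid for every $j\ge0$ and all $\rho\ge0$. For $j\ge1$, differentiating under the integral sign gives $|\phi^{(j)}(\rho)|=2^{j+1}\int_0^{u_0}(1+u)^{s}u^{j-s-1}e^{-2u\rho}\,du$; as $j-s-1>-1$, I would bound this by $\int_0^{u_0}u^{j-s-1}\,du$ when $\rho\le1$ and by $\int_0^{\infty}u^{j-s-1}e^{-2u\rho}\,du=\Gamma(j-s)(2\rho)^{s-j}$ when $\rho\ge1$, which yields the claim. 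For $j=0$ I would use $\phi(\rho)=\phi(0)+\int_0^{\rho}\phi'(t)\,dt$ with $\phi(0)$ finite and $|\phi'(t)|\le C_1(1+t)^{s-1}$. These bounds are then fed into the Fa\`{a} di Bruno formula for $\phi\circ q$, $q(v,\xi)=\xi^2+\tfrac{v^2}{4}$: since $q$ is quadratic, only set partitions into blocks of size $1$ and $2$ survive, so $\partial_v^\alpha\partial_\xi^\beta l_1$ is a finite sum of terms $\phi^{(j)}(q)\,P_j(v,\xi)$ with $j\le|\alpha|+|\beta|$, $2j\ge|\alpha|+|\beta|$ and $\deg P_j\le 2j-|\alpha|-|\beta|$; combining this with $1+q\approx\langle(v,\xi)\rangle^2$ gives $|\partial_v^\alpha\partial_\xi^\beta l_1(v,\xi)|\lesssim\langle(v,\xi)\rangle^{2s-|\alpha|-|\beta|}$.

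For the asymptotic equivalent \eqref{44dfk}, I would rescale $u=w/k$ in the above formula for $\lambda_k$, obtaining $\lambda_k=2k^{s}\int_0^{ku_0}\frac{(1+w/k)^{s-1}}{w^{s+1}}\bigl(1-(\tfrac{1-w/k}{1+w/k})^k\bigr)\,dw$, and pass to the limit by dominated convergence. The integrand converges pointwise to $\frac{1-e^{-2w}}{w^{s+1}}$; as a majorant I would take $\min(2w^{-s},w^{-s-1})$, using $(1+w/k)^{s-1}\le1$, the trivial bound $1-(\tfrac{1-w/k}{1+w/k})^k\le1$, and the bound $1-(\tfrac{1-w/k}{1+w/k})^k\le2w$ deduced from $\tfrac{1-x}{1+x}\ge1-2x\ge0$ together with Bernoulli's inequality, legitimate since $2u_0<1$ over the whole range of integration. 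Hence $\lambda_k\sim2k^{s}\int_0^{\infty}\frac{1-e^{-2w}}{w^{s+1}}\,dw$, and an integration by parts together with the definition of the Gamma function gives $\int_0^{\infty}\frac{1-e^{-2w}}{w^{s+1}}\,dw=\tfrac{2^{s}}{s}\Gamma(1-s)$, whence the constant $\tfrac{2^{1+s}}{s}\Gamma(1-s)$ in \eqref{44dfk}.

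The step I expect to demand the most care is the symbol estimate: the bounds on $\phi^{(j)}$ must be uniform across the regime $\rho\le1$ — where the $+1$ in $(1+\rho)^s$ is essential and the endpoint $j=0$ genuinely differs from $j\ge1$ — and the regime $\rho\ge1$, and these must then be dovetailed with the Fa\`{a} di Bruno bookkeeping so that the polynomial growth in $(v,\xi)$ cancels precisely against the decay of $\phi^{(j)}$, producing exactly the order $2s$. By comparison, the asymptotic part is routine once the integrable majorant has been exhibited.
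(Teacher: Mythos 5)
Your proof is correct, but it takes a genuinely different route from the paper for the symbol-class assertion. The paper substitutes $\tau=\tan^2(\frac{\theta}{2})$ in \eqref{514jh}, integrates by parts in $\tau$, and Taylor-expands the holomorphic factor $\kappa(z)=(1+z)^se^{2z}$ with explicit remainder control; this is heavier, but it yields the complete asymptotic expansion \eqref{4.asymp} together with the constant $d_0$ of \eqref{dzero}, which Theorem~\ref{th1} quotes directly. Your argument instead exploits that $l_1=\phi(q)$ with $q(v,\xi)=\xi^2+\frac{v^2}{4}$ and proves the two-regime bounds $|\phi^{(j)}(\rho)|\le C_j(1+\rho)^{s-j}$ (the change of variables $u=\tan^2(\frac{\theta}{2})$, the identity $\beta(\theta)d\theta=(1+u)^{s-1}u^{-s-1}du$, and the degree count in the Fa\`a di Bruno step are all right, and $1+q\approx\langle(v,\xi)\rangle^2$ closes the estimate); this is shorter and gives exactly what the lemma states, namely $l_1\in\mathbf S^s(\rr^2)$, but not the expansion that the paper extracts in the course of this same proof. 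The only point you leave implicit is the differentiation under the integral sign defining $\phi^{(j)}$, which is routine here since for $j\ge 1$ the differentiated integrand is dominated by $C(1+u)^su^{j-s-1}\in L^1(0,u_0)$ uniformly in $\rho\ge 0$ (and the $j=0\to 1$ step can be done on difference quotients). For the equivalent \eqref{44dfk} your proof is essentially the paper's own (the appendix computation leading to \eqref{6.asyvp}): the paper uses $v=2\sin^2(\frac{\theta}{2})$, so the integrand involves $1-(1-\frac{w}{k})^k$ with the elementary majorant $\min(w^{-s},w^{-1-s})$, while you keep $u=\tan^2(\frac{\theta}{2})$ and control $1-\bigl(\frac{1-w/k}{1+w/k}\bigr)^k$ via $\frac{1-x}{1+x}\ge 1-2x$ and Bernoulli, which is legitimate since $2\tan^2(\frac{\pi}{8})<1$; both reduce by dominated convergence to the same Gamma integral and the same constant $\frac{2^{1+s}}{s}\Gamma(1-s)$.
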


\bigskip

\begin{proof}
The asymptotic equivalent \eqref{44dfk} follows from \eqref{6.asyvp}. Setting 
$$\lambda=1+\xi^2+\frac{v^2}{4},$$
we use the substitution rule with $\tau=\tan^2(\frac{\theta}{2})$ in the formula \eqref{514jh} to get that 
\begin{align*}
l_{1}(v,\xi)&=2\int_{0}^{\tan^2(\frac{\pi}{8})}\frac{\cos(\frac{\theta}{2})}{\sin^{1+2s}(\frac{\theta}{2})}(1-(1+\tau)e^{-2\tau (\lambda-1)})\frac{d\tau}{(1+\tau)\tan(\frac{\theta}{2})}\\
&=2\int_{0}^{\tan^2(\frac{\pi}{8})}\frac{\cos^{-2s}(\frac{\theta}{2})}{\tan^{2+2s}(\frac{\theta}{2})}(1-(1+\tau)e^{-2\tau (\lambda-1)})\frac{d\tau}{1+\tau}
\\
&=2\int_{0}^{\tan^2(\frac{\pi}{8})}\frac{(1+\tau)^s}{\tau^{1+s}}(1-(1+\tau)e^{-2\tau (\lambda-1)})\frac{d\tau}{1+\tau}\\
&=2\int_{0}^{\tan^2(\frac{\pi}{8})}\underbrace{\tau^{-1-s}}_{u'(\tau)}\underbrace{((1+\tau)^{s-1}-(1+\tau)^s e^{-2\tau (\lambda-1)})}_{v(\tau)}d\tau,
\end{align*}
since $d\tau=\tan(\frac{\theta}{2})(1+\tan^2(\frac{\theta}{2})) d\theta$.
By recalling that  $0<s<1$ and integrating by parts, we obtain that
\begin{multline*}
l_{1}(v,\xi)=\frac{2}{s \tan^{2s}(\frac{\pi}{8})}\Big[\Big(1+\tan^2\Big(\frac{\pi}{8}\Big)\Big)^s \overbrace{e^{-2(\lambda-1)\tan^2(\frac{\pi}{8})}}^{\in {\mathbf S}^{-\io}}-\Big(1+\tan^2\Big(\frac{\pi}{8}\Big)\Big)^{s-1}\Big]\\
+\frac2s\int_{0}^{\tan^2(\frac{\pi}{8})}\bigl((s-1)(1+\tau)^{s-2}-s(1+\tau)^{s-1}e^{-2\tau (\lambda-1)}
+2(1+\tau)^s (\lambda-1) e^{-2\tau (\lambda-1)}
\bigr) \frac{d\tau}{\tau^{s}}
\\=-\frac{2(1+\tan^2(\frac{\pi}{8}))^{s-1}}{s \tan^{2s}(\frac{\pi}{8})}+\wt{l_{1}}(v,\xi)+{\mathbf S}^{-\io},
\end{multline*}
where
\begin{align*}
&\wt{l_{1}}(v,\xi)=\frac2s\int_{0}^{\lambda\tan^2(\frac{\pi}{8})}\frac{\lambda^{s-1}}{\sigma^s}\Big[(s-1)\Big(1+\frac\sigma\lambda\Big)^{s-2}-s\Big(1+\frac\sigma\lambda\Big)^{s-1}e^{-2\sigma}e^{\frac{2\sigma}{\lambda}} \\&\hskip230pt+2\Big(1+\frac\sigma\lambda\Big)^s (\lambda-1) e^{-2\sigma}e^{\frac{2\sigma}{\lambda}}
\Big] d\sigma
\\  = & \ \frac{2(s-1)}{s}\lambda^{s-1}\int_{0}^{\lambda\tan^2(\frac{\pi}{8})}\Big(1+\frac\sigma\lambda\Big)^{s-2} \frac{d\sigma}{\sigma^{s}}
+\frac{4\lambda^{s-1}(\lambda-1)}{s} \int_{0}^{\lambda\tan^2(\frac{\pi}{8})}\Big(1+\frac{\sigma}{\lambda}\Big)^s e^{-2\sigma} e^{\frac{2\sigma}{\lambda}}\frac{d\sigma}{\sigma^s}
\\&\hskip55pt -{2\lambda^{s-1}}\int_{0}^{\lambda\tan^2(\frac{\pi}{8})}\Big(1+\frac{\sigma}{\lambda}\Big)^{s-1}e^{-2\sigma} e^{\frac{2\sigma}{\lambda}}\frac{d\sigma}{\sigma^{s}}.
\end{align*}
The sum of the first and last terms writes as
\begin{equation}\label{4.l11te}
l_{1,1}(v,\xi)=\lambda^{s-1}\int_{0}^{\lambda\tan^2(\frac{\pi}{8})}\Big[\frac{2(s-1)}{s}
-2\Big(1+\frac{\sigma}{\lambda}\Big)e^{-2\sigma}e^{\frac{2\sigma}{\lambda}}\Big]\Big(1+\frac\sigma\lambda\Big)^{s-2}\frac{d\sigma}{\sigma^{s}}.
\end{equation}
The main term is the second one
\begin{equation}\label{4.l12te}
l_{1,2}(v,\xi)=\frac{2^{1+s}}{s}\lambda^{s-1}(\lambda-1)\int_{0}^{2\lambda\tan^2(\frac{\pi}{8})}\Big(1+\frac{w}{2\lambda}\Big)^s e^{\frac{w}{\lambda}}
e^{-w} \frac{dw}{w^{s}}.
\end{equation}
We may write
\begin{equation}\label{4.kjn11}
l_{1}(v,\xi)= -\frac{2(1+\tan^2(\frac{\pi}{8}))^{s-1}}{s\tan^{2s}(\frac{\pi}{8})} +{l_{1,1}}(v,\xi)+{l_{1,2}}(v,\xi)+{\mathbf S}^{-\io}.
\end{equation}
By using that $\tan^2(\frac{\pi}{8})<1$ and that the function
$$z\mapsto \kappa(z)=(1+z)^s e^{2z}=\sum_{j\ge 0}a_{j}z^j,$$ 
is holomorphic on $\val z<1$, we obtain that 
\begin{align*}
l_{1,2}&=\frac{2^{1+s}\lambda^{s-1}(\lambda-1)}{s}\sum_{j\ge 0} \frac{a_{j}}{2^{j}\lambda^{j}}\int_{0}^{2\lambda\tan^2(\frac{\pi}{8})}w^{j-s}e^{-w}dw\\
&=\frac{2^{1+s}\lambda^{s-1}(\lambda-1)}{s}\sum_{0\le j\le N} \frac{a_{j}}{2^{j}\lambda^{j}}\int_{0}^{2\lambda\tan^2(\frac{\pi}{8})}w^{j-s}e^{-w}dw\\
&+\frac{2^{1+s}\lambda^{s-1}(\lambda-1)}{s(2\lambda)^{N+1}}\int_{\rho=0}^1\int_{w=0}^{2\lambda\tan^2(\frac{\pi}{8})}\frac{(1-\rho)^N}{N!}
\kappa^{(N+1)}\Big(\frac{\rho w}{2\lambda}\Big)w^{N+1-s}e^{-w} d\rho dw\\
&=\frac{2^{1+s}\lambda^{s-1}(\lambda-1)}{s}\sum_{0\le j\le N} \frac{a_{j}}{2^{j}\lambda^{j}}\Gamma(1+j-s)\\
&-\underbrace{\frac{2^{1+s}\lambda^{s-1}(\lambda-1)}{s}\sum_{0\le j\le N} \frac{a_{j}}{2^{j}\lambda^{j}}\int_{2\lambda\tan^2(\frac{\pi}{8})}^{+\io}
w^{j-s}e^{-w}dw}_{\in \mathbf{S}^{-\io}}\\
&+\frac{2^{1+s}\lambda^{s-1}(\lambda-1)}{s(2\lambda)^{N+1}}\int_{\rho=0}^1\int_{w=0}^{2\lambda\tan^2(\frac{\pi}{8})}\frac{(1-\rho)^N}{N!}\kappa^{(N+1)}\Big(\frac{\rho w}{2\lambda}\Big)w^{N+1-s}e^{-w} d\rho dw.
\end{align*}
To prove that the last line belongs to ${\mathbf S}^{s-N-1}$, it is sufficient to prove that
$$\omega_{0}(v,\xi)=\int_{\rho=0}^1\int_{w=0}^{2\lambda\tan^2(\frac{\pi}{8})}\frac{(1-\rho)^N}{N!}\kappa^{(N+1)}\Big(\frac{\rho w}{2\lambda}\Big)
w^{N+1-s}e^{-w} d\rho dw\in {\mathbf S}^0.$$
To that end, we first notice that the function $\omega_{0}$ is bounded
\begin{multline*}
\val{\omega_{0}(v,\xi)}\le \int_{\rho=0}^1\int_{w=0}^{2\lambda\tan^2(\frac{\pi}{8})}\frac{(1-\rho)^N}{N!}\norm{\kappa^{(N+1)}}_{L^\io(\val z\le\tan^2(\frac{\pi}{8}))}w^{N+1-s}e^{-w}d\rho dw \\ \leq \frac{\Gamma(N+2-s)}{(N+1)!}\norm{\kappa^{(N+1)}}_{L^\io(\val z\le\tan^2(\frac{\pi}{8}))}.
\end{multline*}
Writing $\omega_{0}(v,\xi)=\Omega_{0}(\lambda(v,\xi))$, we have
\begin{multline*}
\frac{d\Omega_{0}}{d\lambda}=\Big(2\lambda\tan^2\Big(\frac\pi8\Big)\Big)^{N+2-s}\frac{e^{-2\lambda\tan^2(\frac\pi8)}}{\lambda}
\int_{0}^1\frac{(1-\rho)^N}{N!}\kappa^{(N+1)}\Big(\rho\tan^2\Big(\frac\pi8\Big)\Big)d\rho
\\-\frac{1}{\lambda}\int_{\rho=0}^1\int_{w=0}^{2\lambda\tan^2(\frac\pi8)}
\frac{(1-\rho)^N}{N!}\kappa^{(N+2)}\Big(\frac{\rho w}{2\lambda}\Big)\frac{\rho w}{2\lambda}w^{N+1-s}e^{-w}d\rho dw,
\end{multline*}
where the first line belongs to ${\mathbf S}^{-\io}$ and by following the exact same reasoning as for bounding the function $\omega_{0}$, we notice that the second line is bounded above in modulus by a constant times $\lambda^{-1}$. It follows that 
$$\val{\nabla_{v,\xi}\omega_{0}}\lesssim  \lambda^{-1}\val{\nabla_{v,\xi} \lambda}\lesssim \lambda^{-1/2}.$$
The higher-order derivatives may be handled in the very same way. It follows that for any $N \in \nn$,
\begin{equation}\label{4.14kkb}
l_{1,2}\equiv\sum_{0\le j\le N}\lambda^{s-j-1}(\lambda-1)\frac{2^{1+s-j}}{s}\Gamma(1+j-s)a_{j}\mod {\mathbf S}^{s-N-1}.
\end{equation}
The study of the term $l_{1,1}$ is very similar. We may write
\begin{multline}\label{4.nb556}
l_{1,1}(v,\xi)=2^{s}\lambda^{s-1}\int_{0}^{2\lambda\tan^2(\frac{\pi}{8})}\Big[\frac{s-1}{s}-\Big(1+\frac{w}{2\lambda}\Big)e^{-w}e^{\frac{w}{\lambda}}\Big]\Big(1+\frac w{2\lambda}\Big)^{s-2}\frac{dw}{w^{s}}\\
=2^{s}\frac{s-1}{s}\lambda^{s-1}\int_{0}^{2\lambda\tan^2(\frac{\pi}{8})}\Big(1+\frac w{2\lambda}\Big)^{s-2}\frac{dw}{w^{s}}
-2^{s}\lambda^{s-1}\int_{0}^{2\lambda\tan^2(\frac{\pi}{8})}\Big(1+\frac w{2\lambda}\Big)^{s-1}e^{\frac{w}{\lambda}}e^{-w}\frac{dw}{w^{s}},
\end{multline}
so that the last term is almost identical to the symbol $l_{1,2}$ (with leading term $\lambda ^{s-1}$) and the first integral in the last line is equal to the negative constant
$$-\frac{2(1-s)}{s}\int_{0}^{3-2^{3/2}}(1+t)^{s-2}\frac{dt}{t^{s}}.$$
We deduce from \eqref{4.kjn11} and \eqref{4.14kkb} that
\begin{equation}\label{4.asymp}
l_{1}\equiv \frac{2^{1+s}}{s}\Gamma(1-s)\lambda^s-d_{0}+\sum_{1\le j\le N}c_{j} \lambda^{s-j}\mod {\mathbf S}^{s-N-1},
\end{equation}
where
\begin{equation*}
d_{0}= \frac{2(1+\tan^2(\frac{\pi}{8}))^{s-1}}{s\tan^{2s}(\frac{\pi}{8})}
+\frac{2(1-s)}{s}\int_{0}^{3-2^{3/2}}(1+t)^{s-2}\frac{dt}{t^{s}}.
\end{equation*}
An easy calculation\footnote{Use the change of variable $t=\tan^{2}\theta$ in the integral term.} shows that
\begin{equation}\label{dzero}
d_{0}=\frac{2}{s\sin^{2s}(\frac{\pi}{8})}=\frac{2^{1+s}(2+\sqrt{2})^{s}}{s}.
\end{equation}
The formula \eqref{4.asymp} yields a full asymptotic expansion for $l_{1}$ as a symbol belonging to the class $\mathbf S^s(\R^2)$. This ends the proof of Lemma~\ref{l1.1cutoff'}.
\end{proof}

\subsection{Study of the linearized operator $\mathcal{K}_2$}\label{l2}
We consider the operator
\begin{align*}
\mathcal{K}_{2}u= & \ -\mu^{-1/2}K(\mu^{1/2}u,\mu)\\
= & \ -\mu^{-1/2}\int_{ |\theta| \leq \frac{\pi}{4}}\beta(\theta)
 \left(\int_{\R}\big((\mu^{1/2}\breve u)'_* \mu'-(\mu^{1/2}\breve u)_*\mu\big)dv_*\right)d\theta ,
\end{align*}
using the notation \eqref{6.evenp} and the expression \eqref{6.kac02}.

\bigskip

\begin{lemma}\label{prop1.2}
The Weyl symbol of the second part of the linearized non-cutoff Kac operator $\mathcal{K}_{2}$ is equal to
\begin{multline}\label{4.oper2}
l_{2}(v,\xi)=
\int_{|\theta| \leq \frac{\pi}{4}}
\beta(\theta)\Biggr[2 e^{-2(\xi^2+\frac{v^2}{4})}-
\frac{\exp\Big(-\frac{2\cos^2 \theta(\xi^2+\frac{v^2}{4})}{(1+\sin{\theta})^2}\Big)}{1+\sin \theta}
\\-\frac{\exp\Big(-\frac{2\cos^2\theta(\xi^2+\frac{v^2}{4})}{(1-\sin{\theta})^2}\Big)}{1-\sin \theta}
\Biggr]d\theta
\end{multline}
and satisfies
$$\forall (\alpha,\beta) \in \nn^2, \exists C_{\alpha,\beta}>0, \forall (v,\xi)\in \rr^2, \ |\partial_v^{\alpha}\partial_{\xi}^{\beta}l_{2}(v,\xi)| \leq C_{\alpha,\beta}e^{-\frac{1}{3}(\xi^2+\frac{v^2}{4})},$$
implying in particular that $l_{2} \in \mathbf{S^{-\infty}}(\rr^2)$.
\end{lemma}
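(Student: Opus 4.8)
The plan is to follow closely the scheme of the proof of Lemma~\ref{l1.1cutoff}. First I would apply the Bobylev formula (Lemma~\ref{prop2}) to $-\mu^{-1/2}K(\mu^{1/2}\breve u,\mu)$ together with the Fourier inversion formula and the Gaussian transform~\eqref{eq4}, in order to write, for $u\in\mathscr S(\rr)$,
$$\mathcal K_{2}u=\int_{|\theta|\le\frac\pi4}\beta(\theta)\,(\mathcal K_{2,\theta}u)\,d\theta ,$$
and then to compute the distribution-kernel $\mathfrak K_{2,\theta}$ of $\mathcal K_{2,\theta}$ for each fixed $\theta\in(-\frac\pi4,\frac\pi4)$. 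In contrast with the operator $\mathcal K_{1}$, this kernel is a sum of \emph{three} Gaussian kernels: one $\theta$-independent rank-one piece coming from the loss term $-g_\ast f$ (since $\mu^{1/2}$ is the $L^2$-normalized ground state of $\mathcal H$, this piece is precisely $\mathbb P_{0}$, of kernel $\mu^{1/2}(v)\mu^{1/2}(y)$), and two pieces coming from the gain term $g'_\ast f'$ after the substitution $w=v\sin\theta+v_\ast\cos\theta$; the splitting of the even part $\breve u(w)=\tfrac12\bigl(u(w)+u(-w)\bigr)$, followed by the further substitution $w\mapsto-w$ in the second half, is exactly what produces the two distinct factors $1\pm\sin\theta$.

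Next I would symmetrize, i.e. evaluate $\mathfrak K_{2,\theta}\bigl(v-\tfrac y2,\,v+\tfrac y2\bigr)$, and use~\eqref{4.lk555} to obtain the Weyl symbol $l_{2,\theta}$ as the partial Fourier transform in the $y$-variable. As in Lemma~\ref{l1.1cutoff}, the key point is that after completing the square the resulting quadratic forms in $(v,y)$ are positive definite — here in fact diagonal, the $vy$ cross-terms cancelling — for $|\theta|<\frac\pi2$, so that the $y$-integral reduces to~\eqref{eq4}. Using the identity $\dfrac{\cos^2\theta}{(1\pm\sin\theta)^2}=\dfrac{1\mp\sin\theta}{1\pm\sin\theta}$ to rewrite both the $v^2$- and the $\xi^2$-coefficients in the common form $\dfrac{2\cos^2\theta}{(1\pm\sin\theta)^2}\bigl(\xi^2+\tfrac{v^2}{4}\bigr)$, this gives $l_{2,\theta}$ equal to the bracket appearing in~\eqref{4.oper2}; one may cross-check this, via the Mehler identities~\eqref{fork22}--\eqref{fork2}, against the formula $\mathcal K_{2,\theta}=-\sum_{l\ge1}(\sin\theta)^{2l}\mathbb P_{2l}$, which also yields Theorem~\ref{th1.111}. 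It then remains to integrate over $\theta$ against $\beta$: the bracket $b_\theta(v,\xi):=l_{2,\theta}(v,\xi)$ is smooth in $\theta$ on $[-\frac\pi4,\frac\pi4]$ and vanishes identically in $(v,\xi)$ at $\theta=0$ (the three Gaussians coincide there), so Lemma~\ref{new003} applies and the Lebesgue integral defines~\eqref{4.oper2}.

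The last and most interesting step is the symbol estimate, and it is here that the restriction $|\theta|\le\frac\pi4$ plays the decisive role. Writing $R=\xi^2+\tfrac{v^2}{4}$, I would use two facts. First, $\theta\mapsto b_\theta(v,\xi)$ is \emph{even} (the two middle terms are exchanged by $\theta\mapsto-\theta$) and vanishes at $\theta=0$, hence vanishes to order $2$ there, uniformly together with each of its $(v,\xi)$-derivatives; a short Taylor argument in $\theta$ then shows that $\partial_v^\alpha\partial_\xi^\beta b_\theta(v,\xi)$ is $O(\theta^2)$ times a finite sum of terms $P_{\alpha,\beta,\theta}(v,\xi)\,e^{-c(\theta)R}$, where $P_{\alpha,\beta,\theta}$ is a polynomial in $(v,\xi)$ whose coefficients are continuous — hence bounded — in $\theta\in[-\frac\pi4,\frac\pi4]$, and where $c(\theta)\in\bigl\{2,\ \tfrac{2\cos^2\theta}{(1\pm\sin\theta)^2}\bigr\}$. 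Second, for $|\theta|\le\frac\pi4$ one has $|\sin\theta|\le\tfrac1{\sqrt2}$, whence $\tfrac{1\mp\sin\theta}{1\pm\sin\theta}\ge 3-2\sqrt2$ and therefore $c(\theta)\ge c_0:=6-4\sqrt2>\tfrac13$. Since $c_0-\tfrac13>0$, the polynomial factor is absorbed by the Gaussian: $\langle(v,\xi)\rangle^m e^{-c_0R}\le C_m e^{-R/3}$ for every $m\in\nn$. Combining the two facts, $\bigl|\partial_v^\alpha\partial_\xi^\beta b_\theta(v,\xi)\bigr|\le C_{\alpha,\beta}\,\theta^2\,e^{-R/3}$ on $|\theta|\le\frac\pi4$, and integrating in $\theta$ yields
$$\bigl|\partial_v^\alpha\partial_\xi^\beta l_{2}(v,\xi)\bigr|\le C_{\alpha,\beta}\,e^{-\frac13(\xi^2+\frac{v^2}{4})}\int_{|\theta|\le\frac\pi4}\beta(\theta)\,\theta^2\,d\theta ,$$
the last integral being finite because $\beta(\theta)\theta^2\approx|\theta|^{1-2s}$ is integrable near $0$ when $0<s<1$. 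This is the asserted estimate, and since $e^{-\frac13(\xi^2+\frac{v^2}{4})}\lesssim\langle(v,\xi)\rangle^{-N}$ for every $N$, it follows that $l_{2}\in\mathbf S^{-\infty}(\rr^2)$. The main obstacle I anticipate is essentially organisational — the careful bookkeeping of the even-part splitting needed to land on the precise three-term form~\eqref{4.oper2} — together with the clean but crucial uniform lower bound $c_0=6-4\sqrt2>\tfrac13$; the remaining computations (Gaussian integrals, Taylor expansion in $\theta$, absorption of polynomial factors) are routine.
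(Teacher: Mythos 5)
Your proposal is correct and follows essentially the same route as the paper: Bobylev's formula, explicit computation of the Gaussian kernels, symmetrization in the second variable coming from the even part $\breve u$, recovery of the Weyl symbol via \eqref{4.lk555}, and Lemma~\ref{new003} to give meaning to the $\theta$-integral. Your Taylor argument in $\theta$ with absorption of the polynomial factors thanks to the margin $6-4\sqrt{2}>\tfrac13$ is precisely the mechanism the paper implements through Lemma~\ref{new003} and the inductive bound on $\partial_\theta^2\partial_v^\alpha\partial_\xi^\beta\psi$.
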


\bigskip

\noindent
The formula \eqref{4.oper2} makes sense as an ordinary integral according to Lemma~\ref{new003}.

\bigskip

\begin{proof}
As in the previous section, we may use the Bobylev formula (Lem\-ma \ref{prop2}) to write
\begin{multline*}(\mathcal{K}_{2}u)(v)=
\\
\frac{e^{\frac{v^2}{4}}}{(2 \pi)^{\frac{3}{4}}}
\int_{|\theta| \leq \frac{\pi}{4}}\beta(\theta)
\left(\int_{\rr}
 \left[(\widehat{\mu^{1/2}\breve u})(0)\widehat{\mu}(\eta)
 -(\widehat{\mu^{1/2}\breve u})(\eta \sin{\theta})\widehat{\mu}(\eta\cos{\theta})\right]e^{iv\eta}d\eta\right) d\theta.
 \end{multline*}
It follows that
\begin{align*}
(\mathcal K_{2} u)(v)&=
\iint_{\rr\times(-\frac\pi{4},\frac\pi{4})}
\beta(\theta)\left(\frac{1}{2 \pi}\int_{\rr}
e^{\frac{v^2-y^2}{4}}
 \left[e^{-\frac{\eta^2}{2}}-e^{-\frac{\eta^2\cos^2 \theta}{2}}e^{-iy\eta\sin{\theta}}\right]e^{iv\eta}\breve u(y) dy\right)d\eta d\theta
\\
&=\int_{\val \theta\le \frac{\pi}{4}}\beta(\theta)(\mathcal K_{2,\theta} u)(v) d\theta,
 \end{align*}
where the distribution-kernel of the operator $\mathcal K_{2,\theta}$ is  given by
(see subsection \ref{6.sec.susub})
\begin{equation}\label{4.cx229}
\frac12\left(\mathfrak{K}_{2,\theta}(v,y)+\mathfrak{K}_{2,\theta}(v,-y)\right),
\end{equation}
whereas the oscillatory integral $\mathfrak{K}_{2,\theta}$ is
$$
\mathfrak{K}_{2,\theta}(v,y)=\frac{e^{\frac{v^2-y^2}{4}}}{2 \pi}
\int_{\rr}
 \left[e^{-\frac{\eta^2}{2}}-e^{-\frac{\eta^2\cos^2 \theta}{2}}e^{-iy\eta\sin{\theta}}\right]e^{iv\eta}
 d\eta.
 $$
By using \eqref{eq4}, we find that
 \begin{align*}
 \mathfrak{K}_{2,\theta}(v,y)&= \frac{e^{\frac{v^2-y^2}{4}}}{2 \pi}\Big[\sqrt{2\pi} e^{-\frac{v^2}{2}}-\frac{\sqrt{2\pi} }{\cos \theta}\exp-\frac{(v-y\sin \theta)^2}{2\cos^2 \theta}\Big] \\&= \frac{1}{\sqrt{2\pi}}\Big[e^{-\frac{y^2+v^2}{4}}-\frac{1}{\cos \theta}\exp-\Big(\frac{(v-y\sin \theta)^2}{2\cos^2 \theta}+\frac{y^2-v^2}{4}\Big)\Big].
\end{align*}
We obtain that
$$\sqrt{2\pi}\mathfrak{K}_{2,\theta}\Big(v-\frac y2,v+\frac y2\Big)=e^{-\frac{y^2+4v^2}{8}}-\frac{1}{\cos \theta}\exp-\Big(\frac{4(1-\sin \theta)^2v^2+(1+\sin \theta)^2y^2}{8\cos^2 \theta}\Big)$$
and
$$\sqrt{2\pi}\mathfrak{K}_{2,\theta}\Big(v-\frac y2,-v-\frac y2\Big)=e^{-\frac{y^2+4v^2}{8}} -\frac{1}{\cos \theta}\exp-\Big(\frac{4(1+\sin \theta)^2v^2+(1-\sin \theta)^2y^2}{8\cos^2 \theta}\Big).$$
Setting
$$l_{2,+,\theta}(v,\xi)=\int_{\rr}  \mathfrak{K}_{2,\theta}\Big(v-\frac y2,v+\frac y2\Big)e^{iy\xi} dy,$$
we deduce from \eqref{eq4} that
$$l_{2,+,\theta}(v,\xi)=2e^{-2(\xi^2+\frac{v^2}4)}-\frac{2}{1+\sin \theta}\exp-\Big(\frac{(1-\sin\theta)^2v^2}{2\cos^2\theta}+\frac{2(1-\sin\theta)\xi^2}{1+\sin\theta}\Big),$$
so that
$$l_{2,+,\theta}(v,\xi)=2e^{-2(\xi^2+\frac{v^2}4)}-\frac{2}{1+\sin \theta}\exp-\Big(\frac{2(1-\sin\theta)}{1+\sin\theta}\Big(\xi^2+\frac{v^2}4\Big)\Big).$$
It follows from \eqref{4.cx229} that the Weyl symbol $l_{2,\theta}$ of the operator $\mathcal K_{2,\theta}$ satisfies
\begin{align*}
2l_{2,\theta}(v,\xi)&=\int_{\rr}  \mathfrak{K}_{2,\theta}\Big(v-\frac y2,v+\frac y2\Big)e^{iy\xi} dy+\int_{\rr}  \mathfrak{K}_{2,\theta}\Big(v-\frac y2,-v-\frac y2\Big)e^{iy\xi} dy\\ &=l_{2,+,\theta}(v,\xi)+l_{2,+,-\theta}(v,\xi).
\end{align*}
We obtain that
\begin{multline*}
l_{2,\theta}(v,\xi)=2e^{-2(\xi^2+\frac{v^2}4)}-\frac{1}{1+\sin \theta}\exp-\Big(\frac{2(1-\sin\theta)}{1+\sin\theta}\Big(\xi^2+\frac{v^2}4\Big)\Big)\\
-\frac{1}{1-\sin \theta}\exp-\Big(\frac{2(1+\sin\theta)}{1-\sin\theta}\Big(\xi^2+\frac{v^2}4\Big)\Big).
\end{multline*}
We notice that the latter is an even, smooth function of the variable $\theta$ which vanishes at zero. By using Lemma~\ref{new003}, we obtain that  the Weyl symbol of the operator $\mathcal K_{2}$ is given by \eqref{4.oper2}. Furthermore, note that the function 
$$\theta \in \Big[-\frac{\pi}{4},\frac{\pi}{4}\Big]\mapsto \frac{2(1-\sin \theta)}{1+\sin\theta},$$ 
is valued in $[6-4\sqrt 2,6+4\sqrt 2]$.
Setting
$$\psi(\theta,v,\xi)=\frac{1}{1+\sin \theta}\exp-\Big(\frac{2(1-\sin\theta)}{1+\sin\theta}\Big(\xi^2+\frac{v^2}4\Big)\Big),$$
we easily check by induction on $\val \alpha+\val \beta$ that
$$\p_{\theta}^2\p_{v}^\alpha\p_{\xi}^\beta\psi=P_{\alpha,\beta}(v,\xi,1+\sin \theta)\exp-\Big(\frac{2(1-\sin\theta)}{1+\sin\theta}\Big(\xi^2+\frac{v^2}4\Big)\Big),$$
where $P_{\alpha,\beta}$ is a polynomial of degree $\val \alpha+\val \beta+4$ in the variables $(v,\xi)$ whose coefficients are rational fractions in the variable $1+\sin \theta$.
Since $6-4\sqrt 2>1/3$, the estimates following from   Lemma \ref{new003}
give the last statement of Lemma
\ref{prop1.2} whose proof is now complete.
\end{proof}

\bigskip

\noindent
The theorems~\ref{th1.1}, \ref{th1.111}, \ref{th1.2} and \ref{th1} are direct consequences of Lemmas  \ref{l1.1cutoff}, \ref{l1.1cutoff'}, \ref{prop1.2},
\eqref{4.asymp}, \eqref{eig02} and the Mehler formula \eqref{fork2}.

\subsection{Proof of the results for the radially symmetric Boltzmann operator}\label{radially}
We consider the Boltzmann operator (\ref{eq1}) with Maxwellian molecules $\gamma=0$ whose cross-section satisfies the assumption \eqref{new001}. As proven in \eqref{6.n113}, $Q(g,f)\in\mathscr S(\R^d)$ when $f,g\in \mathscr S(\R^d)$.

\bigskip

\begin{lemma} \label{3.lem.kn223}
For $f,g\in \mathscr S_{r}(\R^d)$, we have
\begin{equation}\label{bob1}
\mathcal{F}\big(Q(g, f)\big)(\xi)=\int_{\val \theta\le \frac{\pi}{4}}\beta(\theta)\big[\hat g(\xi \sin \theta)\hat f(\xi\cos \theta)-\hat g(0)\hat f(\xi)
\big]d\theta,
\end{equation}
where $\beta$ is the function defined in \eqref{new001}.
\end{lemma}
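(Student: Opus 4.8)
The plan is to derive the radially symmetric Bobylev formula \eqref{bob1} by reduction to the well-known Bobylev identity for the full (non radially symmetric) Maxwellian Boltzmann operator, which is recalled in the appendix of the paper. Recall that for the cross section of the form \eqref{eq1.01} with $\gamma = 0$, say $B(v-v_*,\sigma) = b\big(\frac{v-v_*}{|v-v_*|}\cdot\sigma\big)$, the classical Bobylev formula reads
$$\mathcal{F}\big(Q(g,f)\big)(\xi) = \int_{\SSS^{d-1}} b\Big(\frac{\xi}{|\xi|}\cdot\sigma\Big)\Big[\hat g(\xi^-)\hat f(\xi^+) - \hat g(0)\hat f(\xi)\Big]\,d\sigma,$$
where $\xi^+ = \frac{\xi+|\xi|\sigma}{2}$ and $\xi^- = \frac{\xi-|\xi|\sigma}{2}$. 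First I would record this statement precisely in the form used in the appendix (Bobylev's lemma), so that the rest of the argument is purely a change of the angular variable.

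The key step is the passage from the sphere integral to the one-dimensional angular integral defining $\beta$ in \eqref{new001}. I would fix $\xi\neq 0$ and write $\sigma\in\SSS^{d-1}$ in spherical coordinates about the axis $k=\xi/|\xi|$: setting $\cos\varphi = k\cdot\sigma$ with $\varphi\in[0,\pi/2]$ (the half-sphere coming from the symmetrization $\un_{\{k\cdot\sigma\ge 0\}}$), one has $d\sigma = (\sin\varphi)^{d-2}\,d\varphi\,d\tau$ where $d\tau$ is the measure on the equatorial sphere $\SSS^{d-2}$, of total mass $|\SSS^{d-2}|$; since the integrand depends only on $\varphi$, integrating out $\tau$ produces the factor $|\SSS^{d-2}|$. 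Then I would use the half-angle relations $|\xi^+| = |\xi|\cos(\varphi/2)$, $|\xi^-| = |\xi|\sin(\varphi/2)$, and the fact that for radially symmetric $f,g$ we have $\hat f(\xi^+) = \hat f(|\xi|\cos(\varphi/2))$, $\hat g(\xi^-) = \hat g(|\xi|\sin(\varphi/2))$ (abusing notation by writing $\hat f$ for the even one-variable profile, as in \eqref{2.radia}). Finally I perform the substitution $\varphi = 2\theta$, $d\varphi = 2\,d\theta$, with $\theta$ ranging over $[0,\pi/4]$; using the prescribed definition $\beta(\theta) = |\SSS^{d-2}||\sin 2\theta|^{d-2}b(\cos 2\theta)$ and extending to $\theta\in[-\pi/4,\pi/4]$ by the evenness of $\beta$ (which pairs up $\int_0^{\pi/4}$ with a factor $2$ absorbed into the symmetrized $\int_{-\pi/4}^{\pi/4}$), the sphere integral becomes exactly $\int_{|\theta|\le\pi/4}\beta(\theta)\big[\hat g(\xi\sin\theta)\hat f(\xi\cos\theta) - \hat g(0)\hat f(\xi)\big]\,d\theta$, which is \eqref{bob1}.

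I expect the main obstacle to be bookkeeping rather than anything deep: one must check carefully that the radial symmetry of $f$ and $g$ is exactly what allows $\hat f(\xi^+)$ and $\hat g(\xi^-)$ to depend only on the moduli $|\xi^+|,|\xi^-|$ and hence only on $|\xi|$ and $\varphi$ (this uses that $\mathcal F$ maps $\mathscr S_r(\rr^d)$ into itself, together with $Q(g,f)\in\mathscr S(\rr^d)$ from \eqref{6.n113} to make the Fourier transform and the angular integral legitimate), and that the symmetrization $\tilde B$ together with the invariance of the $\hat g(\xi^-)\hat f(\xi^+)$ term under $\sigma\mapsto -\sigma$ (equivalently $\varphi\mapsto \pi-\varphi$) is consistent with restricting to $\varphi\in[0,\pi/2]$, i.e. $\theta\in[0,\pi/4]$. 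The singularity assumption \eqref{sa1} guarantees $\beta(\theta)\approx|\theta|^{-1-2s}$, so the angular integral must again be interpreted in the finite-part sense of Lemma~\ref{new003}; I would note that the bracket $\hat g(\xi\sin\theta)\hat f(\xi\cos\theta) - \hat g(0)\hat f(\xi)$ vanishes to second order at $\theta = 0$ (because $f,g$ are even, so their profiles have vanishing first derivative at $0$, and $\cos\theta - 1 = O(\theta^2)$, $\sin\theta = O(\theta)$ with the $O(\theta)$ contribution killed by evenness), so the integral is in fact an ordinary Lebesgue integral, exactly as in the remark following Lemma~\ref{l1.1cutoff}.
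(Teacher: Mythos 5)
Your proposal is correct and follows essentially the same route as the paper: starting from the Bobylev formula of Proposition~\ref{ao1}, parametrizing $\sigma$ by the deviation angle about $\xi/|\xi|$ so that $d\sigma=(\sin\varphi)^{d-2}d\varphi\,d\tau$ with the $|\SSS^{d-2}|$ factor, using the half-angle identities $|\xi^{\pm}|=|\xi|\cos(\varphi/2),|\xi|\sin(\varphi/2)$ together with the radial symmetry of $\hat f,\hat g$, and then substituting $\varphi=2\theta$ and using evenness (of $\beta$ and of the bracket, via $\hat g$ even) to pass to the symmetric integral over $|\theta|\le\pi/4$. The remarks on the finite-part interpretation and second-order vanishing at $\theta=0$ match the paper's remark following the lemma, so nothing is missing.
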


\bigskip

\noindent
Notice that the integral (\ref{bob1}) is well-defined according to Lemma~\ref{new003} since the function $\hat g\in \mathscr S_{r}(\R^d)$ is even.

\bigskip

\begin{proof}
Thanks to the Bobylev formula (Proposition~\ref{ao1}), we may write with $\nu=\frac{\xi}{\val \xi}$,
\begin{multline}\label{3.kkww2}
\mathcal{F}\big(Q(g, f)\big)(\xi)=\int_{(0,\pi)_{\theta}\times\mathbb S^{d-2}_{\omega}}
b(\cos \theta)(\sin \theta)^{d-2} \\ 
\times \Big[\hat g\left(\frac{\xi-\val \xi( \omega\sin \theta\oplus \nu\cos \theta)}2\right)\hat f\left(\frac{\xi+\val \xi( \omega\sin \theta\oplus \nu\cos \theta)}2\right)-\hat g(0)\hat f(\xi)\Big]d\theta d\omega.
\end{multline}
The cross section $b(\cos \theta)$ is supported where $0\le \theta\le \frac{\pi}{2}$ and we notice that
\begin{align*}
\xi-\val \xi( \omega\sin \theta\oplus \nu\cos \theta)=& \ \val\xi\bigl(-\omega\sin\theta\oplus\nu(1-\cos \theta)\bigr)\\
= & \ 2\val \xi\sin\Big(\frac\theta2\Big)\Big[-\omega\cos\Big(\frac\theta2\Big)\oplus\nu\sin\Big(\frac\theta2\Big)\Big],
\end{align*}
\begin{align*}
\xi+\val \xi( \omega\sin \theta\oplus \nu\cos \theta)= & \ \val\xi\bigl(\omega\sin\theta\oplus\nu(1+\cos \theta)\bigr)\\
= & \ 2\val \xi\cos\Big(\frac\theta2\Big)\Big[\omega\sin\Big(\frac\theta2\Big)\oplus\nu\cos\Big(\frac\theta2\Big)\Big],
\end{align*}
so that, since $\hat g, \hat f$ are radial functions,
$$\hat g\Big(\frac{\xi-\val \xi( \omega\sin \theta\oplus \nu\cos \theta)}2\Big)=\hat g\Big(\val \xi\sin\Big(\frac\theta2\Big) \nu\Big)=\hat g\Big(\xi\sin\Big(\frac\theta2\Big)\Big),$$
$$\hat f\Big(\frac{\xi+\val \xi( \omega\sin \theta\oplus \nu\cos \theta)}2\Big)=\hat f\Big(\val \xi\cos\Big(\frac\theta2\Big) \nu\Big)=\hat f\Big(\xi\cos\Big(\frac\theta2\Big)\Big),$$
yielding
{\small \begin{align*}
\mathcal{F}\big(Q(g, f)\big)&(\xi)=\val{\mathbb S^{d-2}}\int_{0}^{\frac{\pi}{2}}b(\cos \theta)(\sin \theta)^{d-2}\Big[\hat g\Big(\xi\sin\Big(\frac\theta2\Big)\Big)
\hat f\Big(\xi\cos\Big(\frac\theta2\Big)\Big)-\hat g(0)\hat f(\xi)\Big]d\theta\\
&=2\int_{0}^{\frac{\pi}{4}}\underbrace{b(\cos2\theta)(\sin2\theta)^{d-2}  \val{\mathbb S^{d-2}}}_{=\beta(\theta)\text{ from \eqref{new001}}}\Big[
\hat g(\xi\sin\theta)\hat f(\xi\cos\theta)-\hat g(0)\hat f(\xi)\Big]d\theta,
\end{align*}}
which provides \eqref{bob1}.
\end{proof}

\bigskip

\noindent
We consider the first part of the linearized non-cutoff Boltzmann operator with Maxwellian molecules
$$\mathscr{L}_{1}f=-\mu^{-1/2}Q(\mu,\mu^{1/2}f),$$
where $\mu$ is the Maxwellian distribution defined in \eqref{maxwe}. The next lemmas are analogous to Lemmas \ref{l1.1cutoff}, \ref{l1.1cutoff'},
and their proofs follow the same lines, using Lemma \ref{3.lem.kn223} instead of Lemma \ref{prop2}.

\bigskip

\begin{lemma}\label{l1.1bis}
When acting on the function space $\mathscr{S}_{r}(\rr^d)$, the operator
$\mathscr L_{1}$ is equal to the operator $\mathcal{L}_{1}$ defined by the Weyl quantization of the symbol 
\begin{equation}\label{310n}
l_{1;d}(v,\xi)=\int_{ |\theta| \leq \frac{\pi}{4}}\beta(\theta)
\left[1-\frac{\exp\big(-2 \tan^2({\frac{\theta}{2}})(|\xi|^2+\frac{|v|^2}{4})\big)}{\cos^{2d}(\frac{\theta}{2})}\right]d\theta,
\end{equation}
where $\beta$ is the function defined in \eqref{new001}.
\end{lemma}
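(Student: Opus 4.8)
The plan is to retrace the proof of Lemma~\ref{l1.1cutoff}, replacing the one-dimensional Bobylev formula of Lemma~\ref{prop2} by its radial $d$-dimensional counterpart, Lemma~\ref{3.lem.kn223}. Since $\mu$ and $f$ are radial, the function $\mu^{1/2}f$ is radial as well, so \eqref{bob1} may be applied with $g=\mu$ and $f$ replaced by $\mu^{1/2}f$. Using that $\widehat{\mu}(\xi)=e^{-|\xi|^2/2}$ and $\widehat{\mu}(0)=1$ (special cases of \eqref{eq4}), together with the Fourier inversion formula, one writes exactly as in the Kac case
$$-\mu^{-1/2}Q(\mu,\mu^{1/2}f)(v)=\int_{|\theta|\le\frac{\pi}{4}}\beta(\theta)\,(\mathcal{L}_{1;d,\theta}f)(v)\,d\theta,$$
where $\mathcal{L}_{1;d,\theta}$ is the operator whose distribution-kernel is the oscillatory integral
\begin{align*}
\mathfrak{K}_{1;d,\theta}(v,y)&=\frac{1}{(2\pi)^d}\int_{\R^d}e^{\frac{|v|^2-|y|^2}{4}}\Bigl[e^{-iy\cdot\eta}-e^{-\frac{|\eta|^2\sin^2\theta}{2}}e^{-iy\cdot\eta\cos\theta}\Bigr]e^{iv\cdot\eta}\,d\eta\\
&=\delta_{0}(v-y)-\frac{e^{\frac{|v|^2-|y|^2}{4}}}{(2\pi)^{d/2}|\sin\theta|^d}\exp\Bigl(-\frac{|v-y\cos\theta|^2}{2\sin^2\theta}\Bigr),
\end{align*}
the second equality being obtained by performing the Gaussian integration \eqref{eq4} in each of the $d$ variables.

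Next I would pass from the kernel to the Weyl symbol by means of the relation \eqref{4.lk555}, which gives $l_{1;d,\theta}(v,\xi)=\int_{\R^d}e^{iy\cdot\xi}\mathfrak{K}_{1;d,\theta}(v-\frac{y}{2},v+\frac{y}{2})\,dy$. The $\delta_{0}$ term produces the constant $1$. For the Gaussian term, expanding $v-\frac{y}{2}-(v+\frac{y}{2})\cos\theta$ and adding the contribution $vy\sin^2\theta$ coming from $e^{\frac{|v|^2-|y|^2}{4}}$, the exponent reduces, coordinate by coordinate, to $4\sin^4(\frac{\theta}{2})v_j^2+\cos^4(\frac{\theta}{2})y_j^2$, which is a positive definite quadratic form for $0<|\theta|\le\frac{\pi}{4}$. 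Carrying out the remaining Gaussian integral in $y\in\R^d$ via \eqref{eq4} produces the normalizing factor $\cos^{-2d}(\frac{\theta}{2})$, replacing the $\cos^{-2}(\frac{\theta}{2})$ of the one-dimensional computation, together with the Gaussian $\exp(-2\tan^2(\frac{\theta}{2})(|\xi|^2+\frac{|v|^2}{4}))$, so that
$$l_{1;d,\theta}(v,\xi)=1-\frac{\exp\bigl(-2\tan^2(\frac{\theta}{2})(|\xi|^2+\frac{|v|^2}{4})\bigr)}{\cos^{2d}(\frac{\theta}{2})}.$$
The map $\theta\mapsto l_{1;d,\theta}(v,\xi)$ is even, smooth on $[-\frac{\pi}{4},\frac{\pi}{4}]$ and vanishes to second order at $\theta=0$, so Lemma~\ref{new003} applies: the integral $\int\beta(\theta)l_{1;d,\theta}\,d\theta$ is an ordinary Lebesgue integral equal to the symbol $l_{1;d}$ of \eqref{310n}, and the $\theta$-integration may be interchanged with the Weyl quantization. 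This yields $\mathscr{L}_1f=\mathcal{L}_1f=l_{1;d}^w(v,D_v)f$ for $f\in\mathscr{S}_{r}(\R^d)$.

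Since the argument is a faithful transcription of the proof of Lemma~\ref{l1.1cutoff}, no genuine difficulty is expected. The only point requiring a little care is the $d$-dimensional Gaussian computation: one has to verify that the quadratic form in $(v,y)$ arising in the exponent is positive definite and diagonalizes coordinatewise, as this is precisely what makes the tensorized Gaussian integral go through and what replaces the factor $\cos^{-2}(\frac{\theta}{2})$ of the Kac operator by $\cos^{-2d}(\frac{\theta}{2})$.
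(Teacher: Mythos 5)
Your proposal is correct and follows essentially the same route as the paper's own proof: apply the radial Bobylev formula of Lemma~\ref{3.lem.kn223} to $Q(\mu,\mu^{1/2}f)$ for radial $f$, compute the distribution-kernel of each $\theta$-fixed operator by Gaussian integration \eqref{eq4}, pass to the Weyl symbol via \eqref{4.lk555} using the coordinatewise diagonalization $4\sin^4(\frac{\theta}{2})v_j^2+\cos^4(\frac{\theta}{2})y_j^2$ of the exponent, and invoke Lemma~\ref{new003} to interpret the $\theta$-integral of the even, smooth, vanishing-at-zero symbols as an ordinary Lebesgue integral giving \eqref{310n}. This matches the paper's argument step for step, including the replacement of the factor $\cos^{-2}(\frac{\theta}{2})$ by $\cos^{-2d}(\frac{\theta}{2})$.
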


\bigskip

\begin{lemma}\label{prop3.1bis} 
When the function $\beta$ is given by \eqref{new001'}, the symbol $l_{1;d}(v,\xi)$ belongs to the class $\mathbf{S}^{s}(\rr^{2d})$. 
Furthermore, there exists a sequence of real numbers $(c_{k,d})_{k \geq 1}$ such that for all $N \geq 1$,
$$l_{1;d}(v,\xi)\equiv c_0 \Big(1+|\xi|^2+\frac{|v|^2}{4}\Big)^s-d_{0}+\sum_{k=1}^Nc_{k,d}\Big(1+|\xi|^2+\frac{|v|^2}{4}\Big)^{s-k} \mod \mathbf{S}^{s-N-1}(\rr^{2d}),$$
where the positive constants $c_{0}, d_0>0$ are given by \eqref{czero} and \eqref{dzero}.
\end{lemma}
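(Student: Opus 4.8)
The statement is the $d$-dimensional analogue of Lemma~\ref{l1.1cutoff'}, and the plan is to run exactly that argument while tracking where the dimension enters. Starting from the symbol \eqref{310n} furnished by Lemma~\ref{l1.1bis}, with $\beta$ given by \eqref{new001'}, I would first use the evenness of the integrand in $\theta$ and then substitute $\tau=\tan^2(\tfrac{\theta}{2})$, for which $\beta(\theta)\,d\theta=(1+\tau)^{s-1}\tau^{-1-s}\,d\tau$ and $\cos^{-2d}(\tfrac{\theta}{2})=(1+\tau)^{d}$. Writing $\lambda=\lambda(v,\xi)=1+|\xi|^2+\tfrac{|v|^2}{4}$, this brings \eqref{310n} to the form
$$l_{1;d}(v,\xi)=2\int_{0}^{\tan^2(\frac{\pi}{8})}\tau^{-1-s}\Big((1+\tau)^{s-1}-(1+\tau)^{s-1+d}e^{-2\tau(\lambda-1)}\Big)\,d\tau,$$
which for $d=1$ is precisely the expression appearing in the proof of Lemma~\ref{l1.1cutoff'}; the only difference is that the exponent $s-1$ in front of the Gaussian-normalized term is promoted to $s-1+d$. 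The integrand here is even, smooth and $O(\tau)$ at $0$, so Lemma~\ref{new003} applies and the integral is an honest Lebesgue integral.

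Next I would integrate by parts with $\tau^{-1-s}=-\tfrac1s\tfrac{d}{d\tau}(\tau^{-s})$. The boundary term at $\tau=0$ vanishes since $s<1$ and the bracket is $O(\tau)$; the boundary term at $\tau=\tan^2(\tfrac\pi8)$ produces the constant $-\tfrac{2(1+\tan^2(\pi/8))^{s-1}}{s\tan^{2s}(\pi/8)}$ plus a term carrying the factor $e^{-2(\lambda-1)\tan^2(\pi/8)}\in\mathbf{S}^{-\io}$. The remaining integral $\tfrac2s\int_0^{\tan^2(\pi/8)}\tau^{-s}w'(\tau)\,d\tau$, with $w(\tau)=(1+\tau)^{s-1}-(1+\tau)^{s-1+d}e^{-2\tau(\lambda-1)}$, splits as in the one-dimensional case into the purely algebraic piece $\tfrac{2(s-1)}{s}\int_0^{\tan^2(\pi/8)}\tau^{-s}(1+\tau)^{s-2}\,d\tau$, which is $d$-independent and, together with the boundary constant, reconstitutes exactly $-d_0$ with $d_0$ given by \eqref{dzero}, plus two terms each carrying the factor $e^{-2\tau(\lambda-1)}$, namely $-(s-1+d)(1+\tau)^{s-2+d}e^{-2\tau(\lambda-1)}$ and the dominant $2(\lambda-1)(1+\tau)^{s-1+d}e^{-2\tau(\lambda-1)}$. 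The observation I would stress here is that the dimension $d$ enters only through terms multiplied by $e^{-2\tau(\lambda-1)}$, which is why the two universal constants $c_0$ and $d_0$ are unaffected by passing from $d=1$ to general $d$.

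For these two exponential terms I would substitute $w=2\lambda\tau$ and write $e^{-2\tau(\lambda-1)}=e^{-w}e^{w/\lambda}$, bringing them, up to explicit numerical factors, to the form $\lambda^{s-1}(\lambda-1)\int_0^{2\lambda\tan^2(\pi/8)}\kappa_d(\tfrac{w}{2\lambda})e^{-w}w^{-s}\,dw$ and $\lambda^{s-1}\int_0^{2\lambda\tan^2(\pi/8)}\widetilde{\kappa}_d(\tfrac{w}{2\lambda})e^{-w}w^{-s}\,dw$, where $\kappa_d(z)=(1+z)^{s-1+d}e^{2z}$ and $\widetilde{\kappa}_d(z)=(1+z)^{s-2+d}e^{2z}$ are holomorphic on $|z|<1$ (and $\tan^2(\pi/8)<1$), with $\kappa_d(0)=1$. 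Taylor-expanding $\kappa_d(z)=\sum_{j\ge0}a_{j,d}z^j$ to order $N$, interchanging the finite sum with the integral, and replacing $\int_0^{2\lambda\tan^2(\pi/8)}$ by $\int_0^{+\io}$ modulo an $\mathbf{S}^{-\io}$ error, each term yields a factor $\Gamma(1+j-s)$ exactly as in \eqref{4.14kkb}; since $a_{0,d}=1$, the leading term is $\tfrac{2^{1+s}}{s}\Gamma(1-s)\lambda^s=c_0\lambda^s$ with $c_0$ as in \eqref{czero}, and the entire $d$-dependence resides in the lower-order coefficients $(c_{k,d})_{k\ge1}$.

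The one genuinely non-formal step, which I expect to be the main obstacle (though it is ultimately routine bookkeeping), is showing that the Taylor remainder belongs to $\mathbf{S}^{s-N-1}(\rr^{2d})$. I would handle it exactly as in the proof of Lemma~\ref{l1.1cutoff'}: bound the normalized remainder $\omega_0$ by $\|\kappa_d^{(N+1)}\|_{L^\io(|z|\le\tan^2(\pi/8))}$, then differentiate under the integral sign (the boundary contribution in the $w$-variable being $\mathbf{S}^{-\io}$) to see that, as a function $\Omega_0(\lambda)$ of the single variable $\lambda$, it satisfies $|\Omega_0^{(k)}(\lambda)|\lesssim\lambda^{-k}$ for all $k$. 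Since $\lambda=1+|\xi|^2+\tfrac{|v|^2}{4}$ is an elliptic symbol of order one on $\rr^{2d}$, with $|\nabla_{v,\xi}\lambda|\lesssim\lambda^{1/2}$ and all its derivatives of order $\ge2$ constant, the passage from $\rr^2$ to $\rr^{2d}$ costs nothing: composition gives $\omega_0\in\mathbf{S}^0(\rr^{2d})$, hence the remainder lies in $\mathbf{S}^{s-N-1}(\rr^{2d})$. Collecting all contributions yields, for every $N\ge1$, $l_{1;d}\equiv c_0\lambda^s-d_0+\sum_{k=1}^{N}c_{k,d}\lambda^{s-k}$ modulo $\mathbf{S}^{s-N-1}(\rr^{2d})$ with $c_0,d_0>0$ given by \eqref{czero} and \eqref{dzero}; since each $\lambda^{s-k}$ lies in $\mathbf{S}^{s-k}(\rr^{2d})\subset\mathbf{S}^s(\rr^{2d})$ and the remainder is of lower order, this proves $l_{1;d}\in\mathbf{S}^s(\rr^{2d})$ and completes the argument.
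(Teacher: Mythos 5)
Your proposal is correct and follows essentially the same route as the paper's own proof of Lemma~\ref{prop3.1bis}: the substitution $\tau=\tan^2(\frac{\theta}{2})$, the integration by parts isolating the boundary constant and the $d$-independent algebraic integral that together give $-d_0$, the rescaling $w=2\lambda\tau$ with the holomorphic factor $\kappa_d(z)=(1+z)^{s+d-1}e^{2z}$ Taylor-expanded to produce the $\Gamma(1+j-s)$ coefficients, and the $\mathbf{S}^0$ bound on the remainder via derivatives of $\Omega_{0;d}(\lambda)$. Your observation that the dimension enters only through the factors multiplying $e^{-2\tau(\lambda-1)}$, so that $c_0$ (from $\kappa_d(0)=1$) and $d_0$ are unchanged, is exactly the point the paper makes.
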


\bigskip

\begin{proof}[Proof of Lemmas \ref{l1.1bis}-\ref{prop3.1bis}]
The following proofs are very similar to those given for the non-cutoff Kac operator. However, we pay attention to slightly different computational details due to the multidimensional situation.
Let $u$ be in the  space $\mathscr{S}_{r}(\R^{d})$.
It follows from the Bobylev formula (Lemma \ref{3.lem.kn223}) and the Fourier inversion formula that
\begin{multline*}-\mu^{-1/2}Q(\mu,\mu^{1/2}u)(v)\\=
\frac{e^{\frac{|v|^2}{4}}}{(2 \pi)^{\frac{3d}{4}}}\iint_{\R^{d}\times(-\frac\pi{4},\frac\pi{4})}\beta(\theta)
 \left[\widehat \mu(0)\widehat{\mu^{1/2}u}(\eta)-\widehat{\mu}(\eta \sin{\theta})\widehat{\mu^{1/2}u}(\eta\cos{\theta})\right]e^{iv \cdot \eta}d\eta d\theta.
\end{multline*}
By using \eqref{eq4}, we may write
$$-\mu^{-1/2}Q(\mu,\mu^{1/2}u)(v) =\int_{\val \theta\le \frac{\pi}{4}}\beta(\theta)(\mathcal L_{1,\theta} u)(v) d\theta,$$
where the distribution-kernel of the operator $\mathcal L_{1,\theta}$ is  given by
\begin{align*}
\mathfrak{L}_{1,\theta}(v,y)=\delta_{0}(v-y)-\frac{e^{\frac{\val v^2-\val y^2}{4}}}{(2\pi)^{\frac{d}{2}} \val{\sin\theta}^{d}}\exp{-\frac{\val{v-y\cos \theta}^2}{2\sin^2\theta}}.
\end{align*}
Since 
$$\mathfrak{L}_{1,\theta}\Big(v-\frac y2,v+\frac y2\Big)=\delta_{0}(y)-\frac{e^{-\frac{v \cdot y}{2}}}{(2\pi)^{\frac{d}{2}} \val{\sin\theta}^{d}}\exp{-\frac{\val{v-\frac y2-(v+\frac y2)\cos \theta}^2}{2\sin^2\theta}},$$
we deduce from \eqref{4.lk555} that the Weyl symbol $l_{1,\theta;d}$ of the operator $\mathcal L_{1,\theta}$ writes as
$$l_{1,\theta;d}(v,\xi)=1-\ell_{1,\theta;d}(v,\xi),$$
where 
\begin{align*}
\ell_{1,\theta;d}(v,\xi)&=
\frac{1}{(2\pi)^{\frac{d}{2}} \val{\sin\theta}^{d}}
e^{-\frac{\val v^2\sin^4(\frac{\theta}{2})}{2\sin^2(\frac{\theta}{2})\cos^2(\frac{\theta}{2})}}\int_{\rr^d} e^{iy \cdot \xi}\exp{-\frac{\val y^2\cos^4(\frac{\theta}{2})}{8\sin^2(\frac{\theta}{2})\cos^2(\frac{\theta}{2})}} dy\\
&=\frac{1}{\cos^{2d}(\frac{\theta}{2})}\exp{-\Big(2\tan^2\Big(\frac{\theta}{2}\Big)\Big(\val\xi^2+\frac{\val v^2}4\Big)\Big)}.
\end{align*}
This leads to
$$l_{1,\theta;d}(v,\xi)=1-\frac{\exp{-\big(2\tan^2(\frac{\theta}{2})(\val \xi^2+\frac{\val v^2}4)\big)}}{ \cos^{2d}(\frac{\theta}{2})},$$
According to Lemma~\ref{new003}, this proves \eqref{310n} since $l_{1,\theta;d}(v,\xi)$ is an even, smooth function of the variable $\theta$ on the interval $[-\frac{\pi}{4},\frac{\pi}{4}]$ which vanishes at $\theta=0$.
We shall now check that the symbol $l_{1;d}$ belongs to the class ${\mathbf S}^s(\R^{2d})$. Setting
$$\lambda=1+|\xi|^2+\frac{|v|^2}{4},$$
we use the substitution rule with $\tau=\tan^2(\frac{\theta}{2})$ in the formula \eqref{310n} to get that
\begin{align*}
l_{1;d}(v,\xi)&=2\int_{0}^{\tan^2(\frac{\pi}{8})}\frac{\cos(\frac{\theta}{2})}{\sin^{1+2s}(\frac{\theta}{2})}\bigl(1-(1+\tau)^{d}e^{-2\tau (\lambda-1)}\bigr)\frac{d\tau}{(1+\tau)\tan(\frac{\theta}{2})}\\
&=2\int_{0}^{\tan^2(\frac{\pi}{8})}\underbrace{\tau^{-1-s}}_{u'(\tau)}\underbrace{\bigl((1+\tau)^{s-1}-(1+\tau)^{s+d-1} e^{-2\tau (\lambda-1)}
\bigr) }_{v(\tau)}d\tau,
\end{align*}
since $d\tau=\tan(\frac{\theta}{2})\bigl(1+\tan^2(\frac{\theta}{2})\bigr)d\theta$.
Integrating by parts, we obtain that
\begin{align*}
& \ l_{1;d}(v,\xi)=\frac{2}{s\tan^{2s}(\frac{\pi}{8})}\Big[\Big(1+\tan^2\Big(\frac{\pi}{8}\Big)\Big)^{s+d-1} \overbrace{e^{-2(\lambda-1)\tan^2(\frac{\pi}{8})}}^{\in {\mathbf S}^{-\io}} -\Big(1+\tan^2\Big(\frac{\pi}{8}\Big)\Big)^{s-1}\Big]
\\ & \ +\frac2s\int_{0}^{\tan^2(\frac{\pi}{8})} \big[(s-1)(1+\tau)^{s-2}-(s+d-1)(1+\tau)^{s+d-2}e^{-2\tau (\lambda-1)}
\\ & \hspace{8cm} +2(1+\tau)^{s+d-1} (\lambda-1) e^{-2\tau (\lambda-1)}\big] \frac{d\tau}{\tau^{s}}
\\ & \hspace{1.55cm} =-\frac{2(1+\tan^2(\frac{\pi}{8}))^{s-1}}{s\tan^{2s}(\frac{\pi}{8})}+\wt{l_{1;d}}(v,\xi)+{\mathbf S}^{-\io},
\end{align*}
where
\begin{align*}
\wt{l_{1;d}}(v,\xi)&=\frac2s\int_{0}^{\lambda\tan^2(\frac{\pi}{8})}\lambda^{s-1}\Bigl[(s-1)\Big(1+\frac\sigma\lambda\Big)^{s-2}\\
& \quad -(s+d-1)\Big(1+\frac\sigma\lambda\Big)^{s+d-2}e^{-2\sigma}e^{\frac{2\sigma}{\lambda}}+2\Big(1+\frac\sigma\lambda\Big)^{s+d-1} (\lambda-1) e^{-2\sigma}e^{\frac{2\sigma}{\lambda}}\Bigr] \frac{d\sigma}{\sigma^{s}}\\
&=\frac{2(s-1)}{s}\lambda^{s-1}\int_{0}^{\lambda\tan^2(\frac{\pi}{8})}\Big(1+\frac\sigma\lambda\Big)^{s-2} \frac{d\sigma}{\sigma^{s}}\\
 & \quad  +\frac{4}{s}\lambda^{s-1}(\lambda-1)\int_{0}^{\lambda\tan^2(\frac{\pi}{8})}\Big(1+\frac\sigma\lambda\Big)^{s+d-1}e^{-2\sigma} e^{\frac{2\sigma}{\lambda}}\frac{d\sigma}{\sigma^{s}}\\
& \quad  -\frac{2(s+d-1)}{s}\lambda^{s-1}\int_{0}^{\lambda\tan^2(\frac{\pi}{8})}\Big(1+\frac\sigma\lambda\Big)^{s+d-2}e^{-2\sigma} e^{\frac{2\sigma}{\lambda}}\frac{d\sigma}{\sigma^{s}}.
\end{align*}
The sum of the first and last terms writes as
\begin{multline}\label{4.l11te'}
l_{1,1;d}(v,\xi)\\ =2\lambda^{s-1}
\int_{0}^{\lambda\tan^2(\frac{\pi}{8})}\Bigl[\frac{s-1}{s}-\frac{s+d-1}{s}\Big(1+\frac\sigma\lambda\Big)^{d}e^{-2\sigma}e^{\frac{2\sigma}{\lambda}}\Bigr]\Big(1+\frac\sigma\lambda\Big)^{s-2}\frac{d\sigma}{\sigma^{s}}.
\end{multline}
The main term is the second one
\begin{equation}\label{4.l12te'}
l_{1,2;d}(v,\xi)=\frac{2^{1+s}}{s}\lambda^{s-1}(\lambda-1)\int_{0}^{2\lambda\tan^2(\frac{\pi}{8})}\Big(1+\frac{w}{2\lambda}\Big)^{s+d-1}e^{\frac{w}{\lambda}}e^{-w} \frac{dw}{w^{s}}.
\end{equation}
We may write
\begin{equation}\label{4.kjn11'}
l_{1;d}(v,\xi)= -\frac{2(1+\tan^2(\frac{\pi}{8}))^{s-1}}{s\tan^{2s}(\frac{\pi}{8})} +{l_{1,1;d}}(v,\xi)+{l_{1,2;d}}(v,\xi)+{\mathbf S}^{-\io}.
\end{equation}
By using that $\tan^2(\frac{\pi}{8})<1$ and that the function
$$z\mapsto \kappa_{d}(z)=(1+z)^{s+d-1} e^{2z}=\sum_{j\ge 0}a_{j,d}z^j,$$ 
is holomorphic on $\val z<1$, we obtain that  
\begin{align*}
l_{1,2;d}&=\frac{2^{1+s}}{s}\lambda^{s-1}(\lambda-1)\sum_{j\ge 0} \frac{a_{j,d}}{2^{j}\lambda^{j}}\int_{0}^{2\lambda\tan^2(\frac{\pi}{8})}w^{j-s}e^{-w}dw\\
&=\frac{2^{1+s}}{s}\lambda^{s-1}(\lambda-1)\sum_{0\le j\le N} \frac{a_{j,d}}{2^{j}\lambda^{j}}\int_{0}^{2\lambda\tan^2(\frac{\pi}{8})}w^{j-s}e^{-w}dw\\
&\hskip33pt+\frac{2^{1+s}}{s}\frac{\lambda^{s-1}(\lambda-1)}{(2\lambda)^{N+1}}\int_{\rho=0}^1\int_{w=0}^{2\lambda\tan^2(\frac{\pi}{8})}\frac{(1-\rho)^N}{N!}
\kappa_{d}^{(N+1)}\Big(\frac{\rho w}{2\lambda}\Big)w^{N+1-s}e^{-w} d\rho dw\\
&=\frac{2^{1+s}}{s}\lambda^{s-1}(\lambda-1)\sum_{0\le j\le N} \frac{a_{j,d}}{2^{j}\lambda^{j}}\Gamma(1+j-s)\\
&\hskip22pt-\underbrace{\frac{2^{1+s}}{s}\lambda^{s-1}(\lambda-1)\sum_{0\le j\le N} \frac{a_{j,d}}{2^{j}\lambda^{j}}\int_{2\lambda\tan^2(\frac{\pi}{8})}^{+\io}w^{j-s}e^{-w}dw}_{\in {\mathbf S}^{-\io}}\\
&\hskip22pt+\frac{2^{1+s}}{s}\frac{\lambda^{s-1}(\lambda-1)}{(2\lambda)^{N+1}}\int_{\rho=0}^1\int_{w=0}^{2\lambda\tan^2(\frac{\pi}{8})}\frac{(1-\rho)^N}{N!}
\kappa_{d}^{(N+1)}\Big(\frac{\rho w}{2\lambda}\Big)w^{N+1-s}e^{-w} d\rho dw.
\end{align*}
To prove that the last line belongs to ${\mathbf S}^{s-N-1}$, it is sufficient to prove that
$$\omega_{0;d}(v,\xi)=\int_{\rho=0}^1\int_{w=0}^{2\lambda\tan^2(\frac{\pi}{8})}\frac{(1-\rho)^N}{N!}\kappa_{d}^{(N+1)}\Big(\frac{\rho w}{2\lambda}\Big)w^{N+1-s}e^{-w} d\rho dw\in {\mathbf S}^0.$$
To that end, we first notice that the function $\omega_{0;d}$ is bounded
\begin{multline*}
\val{\omega_{0;d}(v,\xi)}\le\int_{\rho=0}^1\int_{w=0}^{2\lambda\tan^2(\frac{\pi}{8})}\frac{(1-\rho)^N}{N!}\norm{\kappa_{d}^{(N+1)}}_{L^\io(\val z\le\tan^2(\frac{\pi}{8})) }w^{N+1-s}e^{-w} d\rho dw \\
\le \frac{\Gamma(N+2-s)}{(N+1)!}\norm{\kappa_{d}^{(N+1)}}_{L^\io(\val z\le\tan^2(\frac{\pi}{8})) }.
\end{multline*}
Writing $\omega_{0;d}(v,\xi)=\Omega_{0;d}(\lambda(v,\xi))$, we have
\begin{multline*}
\frac{d\Omega_{0;d}}{d\lambda}=\Big(2\lambda\tan^2\Big(\frac\pi8\Big)\Big)^{N+2-s}\frac{e^{-2\lambda\tan^2(\frac\pi8)}}{\lambda}\int_{0}^1\frac{(1-\rho)^N}{N!}\kappa_{d}^{(N+1)}\Bigl(\rho\tan^2\Big(\frac\pi8\Big)\Bigr)d\rho\\
-\lambda^{-1}\int_{\rho=0}^1\int_{w=0}^{2\lambda\tan^2(\frac\pi8)}\frac{(1-\rho)^N}{N!}\kappa_{d}^{(N+2)}\Big(\frac{\rho w}{2\lambda}\Big)\frac{\rho w}{2\lambda}w^{N+1-s}e^{-w} dw,
\end{multline*}
where the first line belongs to ${\mathbf S}^{-\io}$, and by following the exact same reasoning as for bounding the function $\omega_{0;d}$, we notice that the second line is bounded above in modulus by a constant times $\lambda^{-1}$. It follows that 
$$\val{\nabla_{v,\xi}\omega_{0;d}}\lesssim  \lambda^{-1}\val{\nabla_{v,\xi} \lambda}\lesssim \lambda^{-1/2}.$$
The higher-order derivatives may be handled in the very same way. It follows that 
for any $N \in \nn$,
\begin{equation}\label{4.14kkb'}
l_{1,2;d}\equiv\sum_{0\le j\le N}\lambda^{s-j-1}(\lambda-1)\frac{2^{1+s-j}}{s}\Gamma(1+j-s)a_{j}\mod {\mathbf S}^{s-N-1}.
\end{equation}
The study of the term $l_{1,1;d}$ is very similar. We may write   
\begin{multline} \label{4.nb556'}
l_{1,1;d}(v,\xi)=2^{s}\frac{s-1}{s}\lambda^{s-1}\int_{0}^{2\lambda\tan^2(\frac{\pi}{8})}\Big(1+\frac w{2\lambda}\Big)^{s-2}\frac{dw}{w^{s}}\\
-2^{s}\frac{s+d-1}{s}\lambda^{s-1}\int_{0}^{2\lambda\tan^2(\frac{\pi}{8})}\Big(1+\frac w{2\lambda}\Big)^{s+d-2}e^{\frac{w}{\lambda}}e^{-w}\frac{dw}{w^{s}},
\end{multline}
so that the last term is almost identical to the symbol $l_{1,2;d}$ (with leading term $\lambda ^{s-1}$) and the first integral in the last line is equal to the negative constant
$$-\frac{2(1-s)}{s}\int_{0}^{3-2^{3/2}}(1+t)^{s-2}\frac{dt}{t^{s}}.$$
We deduce from \eqref{4.kjn11'} and \eqref{4.14kkb'} that
\begin{equation}\label{4.asymp'}
l_{1;d}\equiv \frac{2^{1+s}}{s}\Gamma(1-s) \lambda^s-d_{0}+\sum_{1\le j\le N}c_{j,d}
 \lambda^{s-j}\mod {\mathbf S}^{s-N-1},
\end{equation}
where
$d_{0}$ is the constant given in \eqref{dzero}. This ends the proof of Lemma \ref{prop3.1bis}.
\end{proof}

\bigskip

\noindent
We consider the second part of the linearized non-cutoff Boltzmann operator with Maxwellian molecules 
$$\mathscr{L}_{2}u= -\mu^{-1/2}Q(\mu^{1/2}u,\mu),$$
where $\mu$ is the Maxwellian distribution defined in (\ref{maxwe}).

\bigskip

\begin{lemma}\label{prop1.2'} 
When acting on $\mathscr S_{r}(\R^{d})$, the second part of the linearized non-cutoff Boltzmann operator with Maxwellian molecules $\mathscr L_{2}$ is equal to the operator $\mathcal L_{2}$ defined by the Weyl quantization of the symbol
\begin{multline}\label{4.oper2'}
l_{2;d}(v,\xi)=
\int_{|\theta| \leq \frac{\pi}{4}}
\beta(\theta)\Biggr[2^{d} e^{-2(\val\xi^2+\frac{\val v^2}{4})}-2^{d-1}
\frac{\exp\Big(-\frac{2\cos^2 \theta(\val\xi^2+\frac{\val v^2}{4})}{(1+\sin{\theta})^2}\Big)}{(1+\sin \theta)^{d}}
\\-2^{d-1}\frac{\exp\Big(-\frac{2\cos^2\theta(\val\xi^2+\frac{\val v^2}{4})}{(1-\sin{\theta})^2}\Big)}{(1-\sin \theta)^{d}}
\Biggr]d\theta,
\end{multline}
satisfying
$$\forall (\alpha,\beta) \in \N^{2d}, \exists C_{\alpha,\beta}>0, \forall (v,\xi)\in \R^{2d}, \ |\partial_v^{\alpha}\partial_{\xi}^{\beta}l_{2;d}(v,\xi)| \leq C_{\alpha,\beta}e^{-\frac{1}{3}(\val\xi^2+\frac{\val v^2}{4})},$$
and implying in particular that $l_{2;d} \in \mathbf{S^{-\infty}}(\R^{2d})$.
\end{lemma}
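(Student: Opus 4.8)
The plan is to follow the same strategy as in the proof of Lemma~\ref{prop1.2} for the non-cutoff Kac operator, replacing the Kac Bobylev formula (Lemma~\ref{prop2}) by the radial Bobylev formula (Lemma~\ref{3.lem.kn223}). Starting from $\mathscr{L}_{2}u=-\mu^{-1/2}Q(\mu^{1/2}\breve u,\mu)$ with $\mu$ the Maxwellian distribution \eqref{maxwe} and using the even-part notation \eqref{6.evenp}, I would apply Lemma~\ref{3.lem.kn223} with $g=\mu^{1/2}\breve u$ and $f=\mu$ together with the Fourier inversion formula to write
\[
\mathscr{L}_{2}u(v)=\int_{\val\theta\le\frac{\pi}{4}}\beta(\theta)\,(\mathcal{L}_{2,\theta}u)(v)\,d\theta ,
\]
where $\mathcal{L}_{2,\theta}$ is the operator attached to the $\theta$-slice. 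Using the $d$-dimensional Gaussian Fourier transform formula \eqref{eq4} (in particular $\widehat\mu(\eta)=e^{-\val\eta^2/2}$), one computes the oscillatory integral $\mathfrak{L}_{2,\theta}(v,y)$, which is a sum of two Gaussians in $(v,y)$: one coming from the term $\widehat{\mu^{1/2}\breve u}(0)\widehat\mu(\eta)$, the other from $\widehat{\mu^{1/2}\breve u}(\eta\sin\theta)\widehat\mu(\eta\cos\theta)$.

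Since radially symmetric functions are even, the distribution-kernel of $\mathcal{L}_{2,\theta}$ is the symmetrization $\tfrac12\bigl(\mathfrak{L}_{2,\theta}(v,y)+\mathfrak{L}_{2,\theta}(v,-y)\bigr)$, exactly as in \eqref{4.cx229}. I would then use the kernel-to-Weyl-symbol correspondence \eqref{4.lk555}, computing $l_{2,+,\theta}(v,\xi)=\int_{\R^{d}}\mathfrak{L}_{2,\theta}(v-\tfrac y2,v+\tfrac y2)e^{iy\cdot\xi}\,dy$ by a further use of \eqref{eq4}, and form the $\theta$-symbol $l_{2,\theta}=\tfrac12\bigl(l_{2,+,\theta}+l_{2,+,-\theta}\bigr)$. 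Simplifying the exponents via the identity $\frac{2\cos^2\theta}{(1\pm\sin\theta)^2}=\frac{2(1\mp\sin\theta)}{1\pm\sin\theta}$ produces, for each fixed $\theta$, the three-term expression appearing inside the integral in \eqref{4.oper2'}. The only point where the computation genuinely differs from the Kac case is the bookkeeping of the multidimensional normalizations: one must carry along the factors $2^{d}$ and $2^{d-1}$ and the powers $(1\pm\sin\theta)^{-d}$ coming from the $d$-fold Gaussian integrations, and one checks that for $d=1$ this recovers the symbol of Lemma~\ref{prop1.2}. Finally, since $l_{2,\theta}$ is an even smooth function of $\theta$ on $[-\frac{\pi}{4},\frac{\pi}{4}]$ vanishing at $\theta=0$, Lemma~\ref{new003} applies and shows that \eqref{4.oper2'} defines the Weyl symbol of $\mathcal{L}_{2}$ as an ordinary Lebesgue integral.

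For the derivative estimates, set $r=\val\xi^2+\frac{\val v^2}{4}$ and observe that the three exponents occurring in \eqref{4.oper2'}, namely $2$ and $\frac{2(1\mp\sin\theta)}{1\pm\sin\theta}$, all take values in $[6-4\sqrt2,\,6+4\sqrt2]$ when $\val\theta\le\frac{\pi}{4}$. The crucial elementary fact is $6-4\sqrt2>\frac13$, so that each term of the $\theta$-integrand is bounded by a constant times $e^{-r/3}$. Writing $\psi(\theta,v,\xi)=(1+\sin\theta)^{-d}\exp\bigl(-\tfrac{2(1-\sin\theta)}{1+\sin\theta}r\bigr)$ and its counterpart with $1-\sin\theta$, I would verify by induction on $\val\alpha+\val\beta$ that $\p_\theta^2\p_v^\alpha\p_\xi^\beta\psi$ equals $P_{\alpha,\beta}(v,\xi,1+\sin\theta)\exp\bigl(-\tfrac{2(1-\sin\theta)}{1+\sin\theta}r\bigr)$, where $P_{\alpha,\beta}$ is a polynomial of degree $\val\alpha+\val\beta+4$ in $(v,\xi)$ whose coefficients are rational fractions of $1+\sin\theta$; together with the uniform bound on the exponents, the error estimates furnished by Lemma~\ref{new003} then yield $\val{\p_v^\alpha\p_\xi^\beta l_{2;d}(v,\xi)}\le C_{\alpha,\beta}e^{-r/3}$, hence $l_{2;d}\in\mathbf{S}^{-\infty}(\R^{2d})$. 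I do not expect a conceptual obstacle here — the whole argument is the $d$-dimensional transcription of the proof of Lemma~\ref{prop1.2} — the only things requiring care being the correct tracking of the Gaussian constants that must reproduce exactly the coefficients $2^{d}$, $2^{d-1}$, $(1\pm\sin\theta)^{-d}$ in \eqref{4.oper2'}, and the numerical check $6-4\sqrt2>\frac13$ that makes the exponential decay uniform in $\theta$.
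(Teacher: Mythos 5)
Your proposal is correct and follows essentially the same route as the paper: Bobylev formula (Lemma~\ref{3.lem.kn223}) plus Fourier inversion, evaluation of the slice kernels $\mathfrak{L}_{2,\theta;d}$ via \eqref{eq4}, symmetrization as in \eqref{4.cx229'}, the kernel-to-Weyl-symbol formula \eqref{4.lk555} with the simplification $\frac{2\cos^2\theta}{(1\pm\sin\theta)^2}=\frac{2(1\mp\sin\theta)}{1\pm\sin\theta}$, and finally Lemma~\ref{new003} together with the bound $6-4\sqrt2>\frac13$ for the derivative estimates, exactly as the paper does (its proof of this lemma explicitly reduces the last step to the argument of Lemma~\ref{prop1.2}). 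The dimensional bookkeeping you flag ($2^{d}$, $2^{d-1}$, $(1\pm\sin\theta)^{-d}$) is indeed the only genuine difference from the Kac case and you track it correctly.
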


\bigskip

\noindent
Notice that the integral \eqref{4.oper2'} makes sense as an ordinary integral according to Lemma~\ref{new003}.

\bigskip

\begin{proof}
As in the previous section, we deduce from the Bobylev formula (Lem\-ma \ref{3.lem.kn223}) that for any $u\in \mathscr S_{r}(\R^d)$,
\begin{multline*}\mathscr{L}_{2}u=\\
\frac{e^{\frac{\val v^2}{4}}}{(2 \pi)^{\frac{3d}{4}}}\int_{|\theta| \leq \frac{\pi}{4}}\beta(\theta)\left(\int_{\R^d}\left[(\widehat{\mu^{1/2}\breve u})(0)\widehat{\mu}(\eta) -(\widehat{\mu^{1/2}\breve u})(\eta \sin{\theta})\widehat{\mu}(\eta\cos{\theta})\right]e^{iv \cdot \eta}d\eta\right) d\theta.
\end{multline*}
It follows that 
\begin{align*}
& \ (\mathscr L_{2} u)(v)\\
= & \  \iint_{\R^{d}\times(-\frac\pi{4},\frac\pi{4})}\beta(\theta)\left(\frac{1}{(2 \pi)^{d}}\int_{\R^{d}}e^{\frac{\val v^2-\val y^2}{4}}
\left[e^{-\frac{\val \eta^2}{2}}-e^{-\frac{\val\eta^2\cos^2 \theta}{2}}e^{-iy\cdot \eta\sin{\theta}}\right]e^{iv\cdot \eta}\breve u(y) dy\right)d\eta d\theta\\
= & \ \int_{\val \theta\le \frac{\pi}{4}}\beta(\theta)(\mathcal L_{2,\theta;d} u)(v) d\theta,
\end{align*}
where the distribution-kernel of the operator $\mathcal L_{2,\theta;d}$ is given by (see subsection \ref{6.sec.susub}),
\begin{equation}\label{4.cx229'}
\frac12\left(\mathfrak{L}_{2,\theta;d}(v,y)+\mathfrak{L}_{2,\theta;d}(v,-y)\right),
\end{equation}
whereas the oscillatory integral $\mathfrak{L}_{2,\theta;d}$ is
$$\mathfrak{L}_{2,\theta;d}(v,y)=\frac{e^{\frac{\val v^2-\val y^2}{4}}}{(2 \pi)^{d}}\int_{\R^{d}} \left[e^{-\frac{\val \eta^2}{2}}-e^{-\frac{\val\eta^2\cos^2 \theta}{2}}e^{-iy \cdot \eta\sin{\theta}}\right]e^{iv \cdot \eta}d\eta.$$
By using \eqref{eq4}, we find that
\begin{align*}
\mathfrak{L}_{2,\theta;d}(v,y)&= \frac{e^{\frac{\val v^2-\val y^2}{4}}}{(2 \pi)^{d}}\Bigl((2\pi)^{\frac{d}{2}} e^{-\frac{\val v^2}{2}}-\frac{(2\pi)^{\frac{d}{2}} }{\cos^{d} \theta}\exp-\frac{\val{v-y\sin \theta}^2}{2\cos^2 \theta}\Bigr)\\
&=(2\pi)^{-\frac{d}{2}} \Bigl[e^{-\frac{\val y^2+\val v^2}{4}}-\frac{1}{\cos^{d} \theta}\exp-\Bigl(\frac{\val{v-y\sin \theta}^2}{2\cos^2 \theta}+\frac{\val y^2-\val v^2}{4}\Bigr)\Bigr].
\end{align*}
We obtain that
\begin{multline*}
(2\pi)^{\frac{d}{2}}\mathfrak{L}_{2,\theta;d}\Big(v-\frac y2,v+\frac y2\Big)\\=e^{-\frac{\val y^2+4\val v^2}{8}}-\frac{1}{\cos^{d} \theta}\exp-\Bigl(\frac{4(1-\sin \theta)^2\val v^2+(1+\sin \theta)^2\val y^2}{8\cos^2 \theta}\Bigr)
\end{multline*}
and
\begin{multline*}
(2\pi)^{\frac{d}{2}}\mathfrak{L}_{2,\theta;d}\Big(v-\frac y2,-v-\frac y2\Big)\\ =e^{-\frac{\val y^2+4\val v^2}{8}}
 -\frac{1}{\cos^{d} \theta}\exp-\Bigl(\frac{4(1+\sin \theta)^2\val v^2+(1-\sin \theta)^2\val y^2}{8\cos^2 \theta}\Bigr).
\end{multline*}
Setting
$$l_{2,+,\theta;d}(v,\xi)=\int_{\rr^d}  \mathfrak{L}_{2,\theta;d}\Big(v-\frac y2,v+\frac y2\Big)e^{iy \cdot \xi} dy,$$
it follows from \eqref{eq4} that 
\begin{align*}
l_{2,+,\theta;d}(v,\xi)= & \ 2^{d}e^{-2(\val \xi^2+\frac{\val v^2}4)}-\frac{2^d}{(1+\sin \theta)^{d}}\exp-\Big(\frac{(1-\sin\theta)^2\val v^2}{2\cos^2\theta}+
\frac{2(1-\sin\theta)\val \xi^2}{1+\sin\theta}\Bigr)\\
= & \ 2^{d}e^{-2(\val\xi^2+\frac{\val v^2}4)}-\frac{2^d}{(1+\sin \theta)^{d}}\exp-\Big(\frac{2(1-\sin\theta)}{1+\sin\theta}\Big(\val \xi^2+\frac{\val v^2}4\Big)\Big).
\end{align*}
We deduce from \eqref{4.cx229'} that the Weyl symbol of the operator $\mathcal L_{2,\theta;d}$ is given by
\begin{align*}
l_{2,\theta;d}(v,\xi)&=\frac{1}{2}\int_{\rr^d}  \mathfrak{L}_{2,\theta}\Big(v-\frac y2,v+\frac y2\Big)e^{iy\cdot \xi} dy+\frac{1}{2}\int_{\rr^d}  \mathfrak{L}_{2,\theta}\Big(v-\frac y2,-v-\frac y2\Big)e^{iy\cdot \xi} dy\\
&=\frac{1}{2}\big(l_{2,+,\theta;d}(v,\xi)+l_{2,+,-\theta;d}(v,\xi)\big).
\end{align*}
This implies that
\begin{multline*}
l_{2,\theta;d}(v,\xi)=2^{d}e^{-2(\val \xi^2+\frac{\val v^2}4)}-\frac{2^{d-1}}{(1+\sin \theta)^{d}}\exp-\Big(\frac{2(1-\sin\theta)}{1+\sin\theta}(\val \xi^2+\frac{\val v^2}4)\Big).\\
-\frac{2^{d-1}}{(1-\sin \theta)^{d}}\exp-\Big(\frac{2(1+\sin\theta)}{1-\sin\theta}(\val \xi^2+\frac{\val v^2}4)\Big).
\end{multline*}
The end of the proof of Lemma~\ref{prop1.2'} is then identical to the proof given for Lemma~\ref{prop1.2}.
\end{proof}

\bigskip

\noindent
Theorems~\ref{th1.1bis}, \ref{th1.111bis}, \ref{th1bis} and Corollary~\ref{th1.111bisbisbis} are direct consequences of Lemmas \ref{l1.1bis}, \ref{prop3.1bis}, \ref{prop1.2'} and the formulas \eqref{6.444mm}, \eqref{fork2} along with \eqref{6.asyvp} and (\ref{eig02}).

\section{Appendix}\label{appendix}

\subsection{A distribution of order 2}
For $\phi$ a function defined on $\R$, we denote
\begin{equation}\label{6.evenp}
\breve{\phi} (\theta)=\frac12\bigl(\phi(\theta)+\phi(-\theta)\bigr),
\end{equation}
its even part.

\bigskip

\begin{lemma}\label{new003}
Let $\nu\in L^1_{loc}({\rr^*})$ be an even function such that $\theta^2\nu(\theta)\in L^1(\rr)$. Then, the mapping
$$ \phi \in C^2_{c}(\rr) \mapsto\lim_{\varepsilon\rightarrow 0_{+}}\int_{\vert\theta\vert\ge \varepsilon}
\nu(\theta)\bigl(\phi(\theta)-\phi(0)\bigr) d\theta=\int_{0}^1\int_{\rr}(1-t)\theta^2\nu(\theta)\phi''(t\theta) d\theta dt,$$
is defining a distribution of order 2 denoted $\finp{(\nu)}$. The linear form $\finp{(\nu)}$ can be extended to $C^{1,1}$ functions
($C^1$ functions whose second derivative is $L^\io$).
For $\phi\in C^{1,1}$ satisfying $\phi(0)=0$, the function $\nu \breve\phi$ belongs to $L^1(\R)$ and
\begin{equation}\label{6.11ssz}
\poscal{\finp{(\nu)}}{\phi}=\int\nu(\theta)\breve{\phi}(\theta)d\theta.
\end{equation}
\end{lemma}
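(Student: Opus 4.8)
The plan is to reduce everything to Taylor's formula with integral remainder combined with the evenness of $\nu$. For $\phi\in C^2_c(\rr)$, supported say in $[-R,R]$, I would write $\phi(\theta)-\phi(0)=\phi'(0)\,\theta+\theta^2\int_0^1(1-t)\phi''(t\theta)\,dt$, the second term being $\int_0^\theta(\theta-\tau)\phi''(\tau)\,d\tau$ and hence exact for every $\theta$. The hypotheses $\nu\in L^1_{loc}(\rr^*)$ and $\theta^2\nu\in L^1(\rr)$ force $\nu,\theta\nu\in L^1(\{|\theta|\ge\varepsilon\})$ for every $\varepsilon>0$, so the odd function $\theta\mapsto\nu(\theta)\phi'(0)\theta$ has vanishing integral over each symmetric set $\{|\theta|\ge\varepsilon\}$. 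First I would establish by the Cauchy criterion that $\lim_{\varepsilon\to0_+}\int_{|\theta|\ge\varepsilon}\nu(\theta)(\phi(\theta)-\phi(0))\,d\theta$ exists: after discarding the odd part, the remaining integrand is bounded in modulus by $\tfrac12\|\phi''\|_{L^\infty}\theta^2|\nu(\theta)|$, so the increments over $\{\varepsilon\le|\theta|\le\varepsilon'\}$ are $O\big(\int_{|\theta|\le\varepsilon'}\theta^2|\nu|\big)\to0$. Then dominated convergence (same bound) and Fubini, justified by $\int_0^1\!\!\int_\rr(1-t)\theta^2|\nu(\theta)|\,|\phi''(t\theta)|\,d\theta\,dt\le\tfrac12\|\phi''\|_{L^\infty}\|\theta^2\nu\|_{L^1}$, identify the limit with $\int_0^1\!\!\int_\rr(1-t)\theta^2\nu(\theta)\phi''(t\theta)\,d\theta\,dt$.

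The order-$2$ statement is then immediate from this explicit formula, which gives $|\langle\finp(\nu),\phi\rangle|\le\tfrac12\|\theta^2\nu\|_{L^1}\|\phi''\|_{L^\infty}$ with a constant independent of the support of $\phi$. This same bound shows the right-hand side is a well-defined linear form on all of $C^{1,1}$: Taylor's formula with integral remainder remains valid there since $\phi'$ is absolutely continuous, and the growth estimate $|\phi(\theta)-\phi(0)|\lesssim|\theta|+\theta^2$ makes the truncated integrals $\int_{|\theta|\ge\varepsilon}\nu(\phi-\phi(0))$ absolutely convergent at infinity; so both the $\varepsilon$-limit and the double integral still make sense and agree, which is the asserted extension.

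For the final identity, let $\phi\in C^{1,1}$ with $\phi(0)=0$. Its even part $\breve\phi$ is again $C^{1,1}$, even, and vanishes at $0$, so $\breve\phi'(0)=0$ and Taylor gives $\breve\phi(\theta)=\theta^2\int_0^1(1-t)\breve\phi''(t\theta)\,dt$, whence $|\breve\phi(\theta)|\le\tfrac12\|\phi''\|_{L^\infty}\theta^2$; together with $\theta^2\nu\in L^1$ this proves $\nu\breve\phi\in L^1(\rr)$. To evaluate $\langle\finp(\nu),\phi\rangle$ I would use $\phi(0)=0$ to drop the constant, and then, for each fixed $\varepsilon>0$, perform the change of variable $\theta\mapsto-\theta$ and invoke the evenness of $\nu$ to get $\int_{|\theta|\ge\varepsilon}\nu(\theta)\phi(\theta)\,d\theta=\int_{|\theta|\ge\varepsilon}\nu(\theta)\breve\phi(\theta)\,d\theta$ (each side an honest Lebesgue integral by the growth bound on $\phi$); letting $\varepsilon\to0$ and using $\nu\breve\phi\in L^1$ yields \eqref{6.11ssz}. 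The one point requiring care is that $\nu\phi$ need not itself lie in $L^1$ near $0$ even though $\phi(0)=0$, since $\phi$ vanishes there only to first order while $\nu$ may behave like $|\theta|^{-1-2s}$ with $s\ge\tfrac12$; this is exactly why the symmetrization must be carried out on the truncated integrals, before passing to the limit, where the evenness of $\nu$ is what produces the needed cancellation. No step is genuinely hard; the work lies entirely in keeping the behaviour at $0$ and at infinity under control simultaneously.
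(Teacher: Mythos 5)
Your proof is correct and takes essentially the same route as the paper's: Taylor's formula with integral remainder, cancellation of the odd term against the even function $\nu$ on the symmetric truncations $\{|\theta|\ge\varepsilon\}$, the resulting bound $\tfrac12\Vert\phi''\Vert_{L^\infty}\theta^2|\nu(\theta)|$ on the even part, and dominated convergence, with the same symmetrization identity handling the $C^{1,1}$ extension and \eqref{6.11ssz}. The differences are purely expository (you make explicit the Cauchy criterion, the integrability at infinity for non-compactly supported $C^{1,1}$ functions, and the need to symmetrize before passing to the limit, which the paper leaves implicit).
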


\bigskip

\begin{proof}
We have
$$\int_{\vert\theta\vert\ge \varepsilon}
\nu(\theta)\bigl(\phi(\theta)-\phi(0)\bigr) d\theta=\int_{0}^1\int_{\vert\theta\vert\ge \varepsilon} (1-t) \theta^2\nu(\theta)\phi''(t\theta) d\theta dt,$$
and the Lebesgue dominated convergence theorem gives the first result.
The extension to $C^{1,1}$ follows from the formula
$$\frac12(\phi(\theta)-\phi(0))+\frac12(\phi(-\theta)-\phi(0))=\frac12\int_{0}^\theta\bigl(\phi'(\tau)-\phi'(-\tau)\bigr) d\tau,$$
since the absolute value of the latter is bounded above by $\frac{1}{2}\norm{\phi''}_{L^\io}\theta^2$. This implies that 
$$\nu(\theta)\times\text{\tt even part}(\phi(\theta)-\phi(0))\in L^1,$$ 
proving as well the last statement.
\end{proof}

\subsection{The non-cutoff Kac and Boltzmann collision operators}

\subsubsection{The Kac collision operator}\label{kacsection}
Let $g,f \in \mathscr{S}(\rr)$ be Schwartz functions. We define
\begin{equation}\label{6.kac01}
F_{f,g}(\underbrace{v,v_{*}}_{w})= f(v) g(v_{*}),\quad
\phi_{f,g}(\theta,v)=\int_{\rr}\bigl(F_{f,g}(R_{\theta}w)-F_{f,g}(w)\bigr) dv_{*},
\end{equation}
where $R_{\theta}$ stands for the rotation of angle $\theta$ in $\R^2$,
$$R_{\theta}=\mat22{\cos\theta}{-\sin\theta}{\sin\theta}{\cos\theta}=\exp(\theta J),\quad J=R_{\frac{\pi}{2}}.$$
We have
$$F_{f,g}(R_{\theta}w)-F_{f,g}(w)=f(v\cos\theta - v_*\sin\theta)g(v\sin\theta +v_*\cos\theta)-f(v) g(v_{*}),$$
so that by using the notations $f_*'=f(v_*')$, $f'=f(v')$, $f_*=f(v_*)$, $f=f(v)$ with
$$v'=v\cos\theta - v_*\sin\theta, \quad v'_* =v\sin\theta +v_*\cos\theta, \quad v,v_* \in \rr,$$
we may write
\begin{equation}\label{6.fuphi}
\phi_{f,g}(\theta,v)=\int_{\R}(g'_{*}f'-g_{*}f)dv_{*}.
\end{equation}
Furthermore, we easily check that its even part as a function of the variable $\theta$ is given by
\begin{equation}\label{6.ev0np}
\breve{\phi}_{f,g}(\theta,v)=\int_{\R}\bigl((\breve{g})'_{*}f'-g_{*}f\bigr) dv_{*}
=\int_{\R}\bigl((\breve{g})'_{*}f'-(\breve{g})_{*}f\bigr) dv_{*}.
\end{equation}
Note that, for each $\theta\in \R$, the mapping
$$(f,g) \in \mathscr S(\R)\times\mathscr S(\R) \mapsto \phi_{f,g}(\theta,\cdot)\in \mathscr S(\R),$$
is continuous uniformly with respect to~$\theta$. In fact, the function $F_{f,g}$ belongs to $\mathscr S(\R^2)$. By denoting $\Pi_1$ the projection onto the first variable, this implies that the function
$$v^l\p_{v}^k\phi_{f,g}(\theta,v)=\int\Pi_{1}(w)^l\p_{v}^k \Phi_{f,g}(\theta,w) dv_{*},$$
is bounded since 
$$\Phi_{f,g}(\theta,w)=F_{f,g}(R_{\theta}w)-F_{f,g}(w) \in \mathscr S(\R^2).$$ 
As a result, the function $v\mapsto \phi_{f,g}(\theta,v)$ belongs to $\mathscr S(\R)$ uniformly with respect to~$\theta$. Moreover, the second derivative with respect to~$\theta$ of the function $\Phi_{f,g}$,
$$F_{f,g}''(e^{\theta J}w)\bigl(e^{\theta J} Jw,e^{\theta J} Jw\bigr)-F_{f,g}'(e^{\theta J}w)e^{\theta J}w,$$
belongs to $\mathscr S(\R^2)$ uniformly with respect to $\theta$. This implies that the second derivative with respect to $\theta$ of the function $\phi_{f,g}$ is in $\mathscr S(\R)$ uniformly with respect to $\theta$. 

We define the non-cutoff Kac operator as
\begin{equation}\label{6.kac03}
K(g,f)(v)=\poscal{\finp(\un_{(-\frac{\pi}{4},\frac{\pi}{4})}\beta)}{\phi_{f,g}(\cdot,v)},
\end{equation}
when $\beta$ is a function satisfying \eqref{ah1}.
Since $\phi_{f,g}(0,v)\equiv 0$, Lemma \ref{new003} allows to replace the finite part by the absolutely converging integral
\begin{equation}\label{6.kac02}
K(g,f)(v)=\int_{|\theta| \leq \frac{\pi}{4}}\beta(\theta)\Bigl(\int_{\RR}\big({\breve{g}}'_* f'-{\breve{g}}_*f \big) dv_*\Bigr)d\theta=K(\breve{g},f)(v).
\end{equation}

\bigskip

\begin{lemma}\label{6.lem.defkac}
For $g,f$ be in $\mathscr{S}(\rr)$, then $K(g,f)\in \mathscr{S}(\rr).$
\end{lemma}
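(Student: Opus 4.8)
The plan is to reduce everything to the uniform Schwartz estimates on $\phi_{f,g}$ that were established in the paragraph preceding the statement, namely that for every $k,\ell\in\nn$ one has
$$C_{k,\ell}:=\sup_{\sigma\in\rr}\ \sup_{v\in\rr}\bigl|v^\ell\p_v^k\p_\theta^2\phi_{f,g}(\sigma,v)\bigr|<+\infty,$$
since $v\mapsto\p_\theta^2\phi_{f,g}(\sigma,v)$ lies in $\mathscr S(\rr)$ uniformly in $\sigma\in\rr$. The starting point is to rewrite $K(g,f)$ using the second (integral remainder) expression for the finite part in Lemma~\ref{new003}: applied to $\nu=\un_{(-\frac\pi4,\frac\pi4)}\beta$ and $\phi=\phi_{f,g}(\cdot,v)$, the definition \eqref{6.kac03} becomes
$$K(g,f)(v)=\int_{0}^{1}\!\!\int_{|\theta|\le\frac\pi4}(1-t)\,\theta^2\beta(\theta)\,\p_\theta^2\phi_{f,g}(t\theta,v)\,d\theta\,dt .$$
The point of this representation is that the $L^1_{\mathrm{loc}}$ singularity of $\beta$ at $0$ has been absorbed into the integrable weight $\theta^2\beta(\theta)$: by \eqref{ah1} one has $\theta^2\beta(\theta)\approx|\theta|^{1-2s}$ near $0$ with $1-2s>-1$ because $0<s<1$, so $\theta^2\beta\in L^1(-\frac\pi4,\frac\pi4)$ (this is exactly the hypothesis of Lemma~\ref{new003} already used to write \eqref{6.kac02}).

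Next I would justify differentiation under the integral sign. Fix $k,\ell\in\nn$. The integrand of the displayed formula, differentiated, is $(1-t)\,\theta^2\beta(\theta)\,v^\ell\p_v^k\p_\theta^2\phi_{f,g}(t\theta,v)$, and it is dominated, uniformly in $(t,v)\in[0,1]\times\rr$, by the fixed $L^1$ function $\theta\mapsto C_{k,\ell}\,|\theta|^2|\beta(\theta)|$ on $(-\frac\pi4,\frac\pi4)\times[0,1]$. The dominated convergence theorem therefore shows that $K(g,f)\in C^\infty(\rr)$, that $v^\ell\p_v^k$ may be moved inside the double integral, and that
$$\sup_{v\in\rr}\bigl|v^\ell\p_v^k K(g,f)(v)\bigr|\ \le\ C_{k,\ell}\int_{0}^{1}(1-t)\,dt\int_{|\theta|\le\frac\pi4}\theta^2|\beta(\theta)|\,d\theta\ <\ +\infty .$$
Since $k,\ell$ are arbitrary, this says precisely that $K(g,f)\in\mathscr S(\rr)$, which is the claim.

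There is no real obstacle here: the substantial work has already been carried out in establishing the uniform-in-$\theta$ Schwartz control of $\phi_{f,g}$ and of its second $\theta$-derivative, and in Lemma~\ref{new003}, which converts the finite part \eqref{6.kac03} into an ordinary absolutely convergent integral. The only step needing a little care is to keep the Taylor remainder in $\theta$ and the $v$-derivatives together, so that one single estimate — integrability of $\theta^2\beta$ times a fixed Schwartz seminorm of $\p_\theta^2\phi_{f,g}$ — simultaneously legitimises the convergence of the defining integral, the exchange of $v$-derivatives with the integral, and the bound on every Schwartz seminorm of $K(g,f)$. (The same argument applies verbatim with $\phi_{f,g}$ replaced by its even part $\breve\phi_{f,g}$, in view of \eqref{6.kac02}, should one prefer that normalisation.)
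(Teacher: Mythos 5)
Your argument is correct and is essentially the paper's own proof: both rest on the integral-remainder representation of the finite part from Lemma~\ref{new003} together with the previously established fact that $\p_\theta^2\phi_{f,g}(\theta,\cdot)$ lies in $\mathscr S(\rr)$ uniformly in $\theta$, so that every seminorm $v^\ell\p_v^k$ can be passed inside and bounded by $C_{k,\ell}\int\theta^2\beta(\theta)\,d\theta$. You merely make explicit the dominated-convergence step that the paper leaves implicit; no gap.
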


\bigskip

\begin{proof}
We deduce from the above properties of the function $\phi_{f,g}$ and \eqref{6.kac03} that the function $K(g,f)$ is smooth and that for any $k,l \in \nn$,
$$v^l\p_{v}^k (K(g,f))(v)=\poscal{\finp(\un_{(-\frac{\pi}{4},\frac{\pi}{4})}\beta)}{v^l\p_{v}^k\phi_{f,g}(\cdot,v)}.$$
Since the second derivative with respect to $\theta$ of the function $\phi_{f,g}$ belongs to $\mathscr S(\R)$ uniformly with respect to $\theta$, we obtain that
$$v\mapsto v^l\p_{v}^k (K(g,f))(v)\in L^\io.$$
\end{proof}

\subsubsection{The Boltzmann collision operator}
We consider the Boltzmann operator with Maxwellian molecules
$$Q(g, f)=\int_{\rr^d}\int_{\SSS^{d-1}}b\Big(\frac{v-v_{*}}{\val{v-v_{*}}}\cdot \sigma\Big)(g'_{*} f'-g_{*}f)d\sigma dv_*.$$
By using polar coordinates, $v-v_{*}=\rho\nu$, $\rho>0$, $\nu\in \mathbb S^{d-1}$, we may write
\begin{multline*}
Q(g, f)=\\
\int_{\R_{\rho}^+\times\SSS^{d-1}_{\sigma}\times\SSS^{d-1}_{\nu}}b(\nu\cdot \sigma)
\Bigl[g\Bigl(v-\frac{\rho(\sigma+ \nu)}{2}\Bigr)f\Bigl(v+\frac{\rho(\sigma- \nu)}{2}\Bigr)-g(v-{\rho\nu})f(v)\Bigr] \rho^{d-1}d\rho d\sigma d\nu.
\end{multline*}
Setting $\sigma=\omega\sin \theta\oplus\nu\cos \theta$ with $\omega\in \SSS^{d-2}$, $\omega\perp \nu$, $0<\theta<\pi,$ we obtain that
\begin{align*}
&Q(g, f)=\int_{\R_{\rho}^+\times\SSS^{d-2}_{\omega}\times (0,\pi)\times\SSS^{d-1}_{\nu}
}b(\cos \theta) \rho^{d-1} (\sin \theta)^{d-2} 
\\&\hskip44pt
\Bigl[g\Bigl(v-\frac{\rho(\omega\sin \theta\oplus\nu\cos \theta+ \nu)}{2}\Bigr)f\Bigl(v+\frac{\rho(\omega\sin \theta\oplus\nu\cos \theta- \nu)}{2}\Bigr)
\\&\hskip199pt-
g(v-{\rho\nu})f(v)
\Bigr]d\rho  d\theta d\omega d\nu
\\&\hskip33pt=
\int_{\R_{\rho}^+\times\SSS^{d-2}_{\omega}\times (0,\pi)\times\SSS^{d-1}_{\nu}
}b(\cos \theta) \rho^{d-1} (\sin \theta)^{d-2} 
\\&\hskip33pt
\Bigl[g\Bigl(v-{\rho\cos\frac{\theta}{2}\Big(\omega\sin\frac{\theta}{2}\oplus\nu\cos\frac{\theta}{2}\Big)}\Bigr)
f\Bigl(v+{\rho\sin\frac{\theta}{2}\Big(\omega\cos\frac{\theta}{2}\ominus\nu \sin\frac{\theta}{2}\Big)}\Bigr)
\\&\hskip77pt-
g(v-{\rho\nu})f(v)
\Bigr]d\rho d\theta d\omega d\nu.
\end{align*}
Since the cross section $b(\cos \theta)$ is supported where $0 \leq \theta \leq \frac{\pi}{2}$, we have
\begin{multline*}
Q(g, f)=\int_{\R_{\rho}^+\times\SSS^{d-2}_{\omega}\times (0,\frac{\pi}{4})\times\SSS^{d-1}_{\nu}}2b(\cos 2\theta) \rho^{d-1} (\sin 2\theta)^{d-2} d\rho d\theta d\omega d\nu\\
\Bigl[g\bigl(v-{\rho\cos\theta(\omega\sin\theta\oplus\nu\cos\theta)}\bigr)f\bigl(v+{\rho\sin\theta(\omega\cos\theta\ominus\nu\sin\theta)}\bigr)-g(v-{\rho\nu})f(v)\Bigr].
\end{multline*}
By using \eqref{new001}, we obtain that
\begin{multline*}
Q(g, f)=\frac{1}{\val {\SSS^{d-2}}}\int_{\R_{\rho}^+\times\SSS^{d-2}_{\omega}\times (0,\frac{\pi}{4})\times\SSS^{d-1}_{\nu}}2\beta(\theta) \rho^{d-1}d\rho d\theta d\omega d\nu\\
\Bigl[g\bigl(v-{\rho\cos\theta(\omega\sin\theta\oplus\nu\cos\theta)}\bigr)f\bigl(v+{\rho\sin\theta(\omega\cos\theta\ominus\nu\sin\theta)}\bigr)
-g(v-{\rho\nu})f(v)\Bigr].
\end{multline*}
We define 
\begin{multline}
\Psi_{f,g}(\theta,v)=\frac{1}{\val {\SSS^{d-2}}}\int_{\SSS^{d-2}_{\omega}\times \R^+_{\rho}\times\mathbb S^{d-1}_{\nu}}
g\bigl(v-{\rho\cos\theta(\omega\sin\theta\oplus\nu\cos\theta)}\bigr)
\\ \times f\bigl(v+{\rho\sin\theta(\omega\cos\theta\ominus\nu\sin\theta)}\bigr) \rho^{d-1}d\omega d\rho d\nu.
\end{multline}
We notice that
\begin{align*}
\Psi_{f,g}(-\theta,v)&=\frac{1}{\val {\SSS^{d-2}}}\int_{\SSS^{d-2}_{\omega}\times \R^+_{\rho}\times\mathbb S^{d-1}_{\nu}}g\bigl(v-{\rho\cos\theta(-\omega\sin\theta\oplus\nu\cos\theta)}\bigr)\\ 
&\hskip99pt \times f\bigl(v-{\rho\sin\theta(\omega\cos\theta\oplus\nu\sin\theta)}\bigr) \rho^{d-1}d\omega d\rho d\nu\\
&=\frac{1}{\val {\SSS^{d-2}}}\int_{\SSS^{d-2}_{\omega}\times \R^+_{\rho}\times\mathbb S^{d-1}_{\nu}}g\bigl(v-{\rho\cos\theta(\omega\sin\theta\oplus\nu\cos\theta)}\bigr)\\
&\hskip99pt  \times f\bigl(v+{\rho\sin\theta(\omega\cos\theta\ominus\nu\sin\theta)}\bigr)\rho^{d-1} d\omega d\rho d\nu\\
&=\Psi_{f,g}(\theta,v),
\end{align*}
so that the function $\theta\mapsto\Psi_{f,g}(\theta,v)$ is even. Furthermore, we have
\begin{equation}\label{6.n112}
\Psi_{f,g}(0,v)=\int_{\R^+_{\rho}\times\mathbb S^{d-1}_{\nu}}g(v-\rho\nu)f(v)\rho^{d-1} d\rho d\nu.
\end{equation}
When $f,g\in \mathscr S(\R^d)$, we get that $v \in \R^d \mapsto\p_{\theta}^m\Psi_{f,g}(\theta,v)$ belongs uniformly to $\mathscr S(\R^d)$ since
\begin{multline*}
\val{v'_{*}}^2+\val{v'}^2=\val{v-\rho\cos\theta(\omega\sin\theta\oplus\nu\cos\theta)}^2+\val{v+\rho\sin\theta(\omega\cos\theta\ominus\nu\sin\theta)}^2\\
=2\val v^2+\rho^2-2\rho v\cdot \nu=\val v^2+\val{v-\rho\nu}^2=\val{v_{*}}^2+\val{v}^2 \ge \frac{1}{3}(\val{v}^2+\rho^2).
\end{multline*}
Lemma \ref{new003} allows to define the Boltzmann operator
\begin{equation}\label{6.n113}
Q(g,f)(v)=\int_{\val \theta\le \frac{\pi}{4}}\beta(\theta) \left(\Psi_{f,g}(\theta,v)-\Psi_{f,g}(0,v) \right)d\theta.
\end{equation}
Furthermore, we have $Q(g,f)\in \mathscr S(\R^d)$ when $f,g\in \mathscr S(\R^d)$.

\subsection{The Bobylev formula}

\subsubsection{The Bobylev formula for the Boltzmann operator with Maxwellian molecules}\label{bobylev1}
For the sake of completeness, we include the statement of the Bobylev formula following the presentation given in the appendix of \cite{al-1}. The Bobylev formula provides an explicit formula for the Fourier transform of the Boltzmann operator \eqref{eq1}.

\bigskip

\begin{proposition} \label{ao1}
The Fourier transform of the Boltzmann operator with Maxwellian molecules whose cross section satisfies the assumption (\ref{sa1}), 
$$Q(g, f)(v)=\int_{\rr^d}\int_{\SSS^{d-1}}b\Big(\frac{v-v_{*}}{|v-v_{*}|} \cdot \sigma\Big)\bigl(g'_* f'-g_{*}f\bigr)d\sigma dv_*,$$
is equal to
\begin{align*}
\mathcal{F}\big(Q(g, f)\big)(\xi)= & \ \int_{\rr^d}Q(g, f)(v)e^{-iv \cdot \xi}dv\\
= & \ \int_{\SSS^{d-1}} b\Big(\frac{\xi}{|\xi|} \cdot \sigma\Big)\bigl[\widehat{g}(\xi^-)\widehat{f}(\xi^+)-
\widehat{g}(0)\widehat{f}(\xi)\bigr]d\sigma,
\end{align*}
where $\xi^+=\frac{\xi+|\xi|\sigma}{2}$ and $\xi^-=\frac{\xi-|\xi|\sigma}{2}$.
\end{proposition}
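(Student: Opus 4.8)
\noindent
The plan is to prove the identity first for an \emph{integrable} cross section and then to recover the general non-cutoff case by truncation. Fix $g,f\in\mathscr S(\R^d)$. The operator $Q(g,f)$ has been defined in \eqref{6.n113} through the finite part of Lemma~\ref{new003}, and the right-hand side of the asserted formula is itself an absolutely convergent integral: the integrand $\widehat g(\xi^-)\widehat f(\xi^+)-\widehat g(0)\widehat f(\xi)$ vanishes at $\sigma=\xi/|\xi|$, its expansion in powers of the deviation angle $\theta$ has a linear term which averages to zero over the azimuthal sphere $\SSS^{d-2}$ by symmetry, so that its $\SSS^{d-2}$-average is $O(\theta^2)$ and is therefore absorbed by the singular factor $b(\cos\theta)(\sin\theta)^{d-2}\approx\theta^{-1-2s}$ for every $0<s<1$. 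Hence, truncating $b$ to $b_n=\min(b,n)$ and letting $n\to+\infty$ by dominated convergence on both sides, it suffices to establish the formula when $b\in L^1(\SSS^{d-1})$, in which case $Q=Q^+-Q^-$ splits into two absolutely convergent pieces.

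The loss part is immediate: by rotation invariance of $d\sigma$, the integral $\int_{\SSS^{d-1}}b\bigl(\tfrac{v-v_*}{|v-v_*|}\cdot\sigma\bigr)d\sigma$ is a constant independent of the unit vector $\tfrac{v-v_*}{|v-v_*|}$, so that $Q^-(g,f)(v)=\bigl(\int_{\SSS^{d-1}}b\,d\sigma\bigr)\widehat g(0)\,f(v)$ and $\mathcal F\bigl(Q^-(g,f)\bigr)(\xi)=\bigl(\int_{\SSS^{d-1}}b(\tfrac{\xi}{|\xi|}\cdot\sigma)\,d\sigma\bigr)\widehat g(0)\widehat f(\xi)$, which is precisely the loss contribution on the right-hand side.

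For the gain part I would use the classical fact that, for Maxwellian molecules, the map $\mathcal T\colon(v,v_*,\sigma)\mapsto\bigl(v',v'_*,\tfrac{v-v_*}{|v-v_*|}\bigr)$ is an involution of $\{v\neq v_*\}\times\SSS^{d-1}$ with unit Jacobian under which $b\bigl(\tfrac{v-v_*}{|v-v_*|}\cdot\sigma\bigr)$ is invariant. Applying $\mathcal T$ to
$$\mathcal F\bigl(Q^+(g,f)\bigr)(\xi)=\int_{\R^{2d}}\int_{\SSS^{d-1}}b\bigl(\tfrac{v-v_*}{|v-v_*|}\cdot\sigma\bigr)f(v')g(v'_*)\,e^{-iv\cdot\xi}\,d\sigma\,dv\,dv_*$$
turns the integrand into the tensor product $f(v)g(v_*)$ at the cost of replacing the phase by $e^{-iv'\cdot\xi}$ with $v'=\tfrac{v+v_*}{2}+\tfrac{|v-v_*|}{2}\sigma$. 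After the substitution $w=\tfrac{v+v_*}{2}$, $u=v-v_*$, so that $v=w+\tfrac u2$ and $v'=w+\tfrac{|u|}{2}\sigma$, the $w$-integral of $f(w+\tfrac u2)g(w-\tfrac u2)e^{-iw\cdot\xi}$ is evaluated by the convolution theorem as $(2\pi)^{-d}\int_{\R^d}\widehat f(\eta)\widehat g(\xi-\eta)e^{\frac{i}{2}u\cdot(2\eta-\xi)}\,d\eta$. One is then left with an integral over $u$ and $\eta$ carrying the phase $e^{\frac{i}{2}u\cdot(2\eta-\xi)-\frac{i}{2}|u|\sigma\cdot\xi}$ and the weight $b(\tfrac{u}{|u|}\cdot\sigma)$; evaluating the radial part of the $u$-integral by the regularized identity $\int_0^{+\infty}r^{d-1}e^{irA}\,dr=(d-1)!\,(-iA+i0)^{-d}$ and carrying out the remaining elementary changes of variables collapses the $\eta$-integral and produces $\int_{\SSS^{d-1}}b(\tfrac{\xi}{|\xi|}\cdot\sigma)\,\widehat g(\xi^-)\widehat f(\xi^+)\,d\sigma$, as asserted.

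The delicate step is this last computation: none of the $u$-, $\eta$- and radial integrals that occur converges absolutely, so the formal applications of Fubini's theorem must be justified. The cleanest way is to avoid splitting $Q$ into gain and loss at all, and instead to carry the combination $e^{-iv'\cdot\xi}-e^{-iv\cdot\xi}$ --- which vanishes exactly when $\sigma=\tfrac{v-v_*}{|v-v_*|}$, i.e. $v'=v$ --- through the whole computation above, so that every integrand in sight is dominated by an integrable function; an alternative is to keep a Gaussian regularizing factor $e^{-\varepsilon|u|^2}$ throughout and let $\varepsilon\to 0_+$ at the end. This is the route followed in the appendix of \cite{al-1}, whose presentation we adopt.
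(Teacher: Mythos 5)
The paper itself contains no proof of Proposition~\ref{ao1}: it is stated ``for the sake of completeness'', with the derivation delegated to the appendix of \cite{al-1}, so your attempt has to be measured against that classical argument. Your reduction to an integrable cross section (truncation $b_n=\min(b,n)$ and dominated convergence on both sides, using the quadratic vanishing in $\theta$ of the azimuthal average on the right and, on the left, the evenness in $\theta$ underlying \eqref{6.n113} and Lemma~\ref{new003}), your treatment of the loss term, and the pre/post-collisional involution $(v,v_*,\sigma)\mapsto(v',v'_*,k)$ with unit Jacobian are all correct and are indeed the standard opening moves.

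The genuine gap is in the gain term, after that change of variables. The entire content of Bobylev's formula at that point is the conversion of the spherical integral carrying $b\big(\frac{u}{|u|}\cdot\sigma\big)$, $u=v-v_*$, into one carrying $b\big(\frac{\xi}{|\xi|}\cdot\sigma\big)$, namely the symmetry identity
\begin{equation*}
\int_{\SSS^{d-1}}b\Big(\frac{u}{|u|}\cdot\sigma\Big)e^{-\frac{i}{2}|u|\,\sigma\cdot\xi}\,d\sigma
=\int_{\SSS^{d-1}}b\Big(\frac{\xi}{|\xi|}\cdot\sigma\Big)e^{-\frac{i}{2}|\xi|\,\sigma\cdot u}\,d\sigma,
\end{equation*}
which holds because the left-hand side depends only on $|u|\,|\xi|$ and $\frac{u}{|u|}\cdot\frac{\xi}{|\xi|}$ and is symmetric under exchanging the two unit vectors (conjugate by a rotation that swaps them). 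With this identity, Fubini (legitimate for the truncated, integrable $b_n$) factors the $v$- and $v_*$-integrals and gives $\int_{\SSS^{d-1}}b\big(\frac{\xi}{|\xi|}\cdot\sigma\big)\widehat{f}(\xi^+)\widehat{g}(\xi^-)\,d\sigma$ directly, with no divergent integral anywhere. Your proposal never states this identity. Instead you Fourier-expand $f$ and $g$, invoke the regularized identity $\int_0^{+\infty}r^{d-1}e^{irA}\,dr=(d-1)!\,(-iA+i0)^{-d}$, and assert that ``elementary changes of variables collapse the $\eta$-integral''. As written this cannot be carried out: the phase $\frac12 u\cdot(2\eta-\xi)-\frac12|u|\,\sigma\cdot\xi$ is not linear in $u$, the weight still depends on $u/|u|$, and nothing in the manipulation explains how $b\big(\frac{u}{|u|}\cdot\sigma\big)$ becomes $b\big(\frac{\xi}{|\xi|}\cdot\sigma\big)$ --- which is precisely what has to be proven; indeed, undoing your $\eta$-integration shows that any such ``collapse'' is equivalent to the displayed symmetry identity. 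The detour also reintroduces non-absolutely convergent integrals, and your proposed remedy is in the end ``this is the route followed in the appendix of \cite{al-1}'', i.e.\ a deferral to the reference rather than an argument. Insert the symmetry identity (or an equivalent explicit computation of the $\sigma$-integral) and drop the Fourier expansion of $f,g$; without it, the gain-term computation is a hole.
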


\bigskip

\subsubsection{The Bobylev formula for the Kac operator}\label{bobylev2}
For $f,g\in \mathscr S(\R)$, the function
\begin{equation}\label{6.knb55}
\theta\mapsto \psi_{f,g}(\theta,\xi)=\widehat{g}(\xi \sin{\theta})\widehat{f}(\xi\cos{\theta})-\widehat g(0)\widehat{f}(\xi),
\end{equation}
is vanishing at zero and has a bounded second derivative. According to Lemma \ref{new003}, the following integral makes sense
$$\int_{\val \theta\le \frac{\pi}{4}} \beta(\theta) \breve{\psi}_{f,g}(\theta,\xi) d\theta=\int_{\val \theta\le \frac{\pi}{4}} \beta(\theta) {\psi}_{f,\breve{g}}(\theta,\xi) d\theta,$$
where $\breve{\psi}_{f,g}(\theta,\xi)$ is the even part of the function $\psi_{f,g}$ with respect to the variable $\theta$, when the cross section $\beta$ satisfies the assumption \eqref{ah1}.

\bigskip

\begin{lemma} \label{prop2}
When the cross section satisfies the assumption \eqref{ah1}, the Kac operator $K(g,f)$ defines a Schwartz function for $f,g\in \mathscr S(\R)$.
Furthermore, its Fourier transform is given by
$$\widehat{K(g,f)}(\xi)=\int_{|\theta| \leq \frac{\pi}{4}}\beta(\theta) \left[\widehat{\breve{g}}(\xi \sin{\theta})\widehat{f}(\xi\cos{\theta})-\widehat g(0)\widehat{f}(\xi)\right]d\theta.$$
\end{lemma}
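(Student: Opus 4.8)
The plan is to reduce the Fourier-side identity to the elementary observation that, for each fixed $\theta$, the pre-to-post collisional map $(v,v_*)\mapsto(v',v_*')$ is the rotation $R_\theta$ on $\R^2$, hence Lebesgue measure preserving, and then to commute the Fourier integral in $v$ with the finite-part integral in $\theta$. The fact that $K(g,f)\in\mathscr S(\R)$ for $f,g\in\mathscr S(\R)$ is not new: it is exactly Lemma~\ref{6.lem.defkac}. In particular $K(g,f)\in L^1(\R)$ and its Fourier transform is a well-defined continuous function, so only the displayed formula has to be proved.

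First I would compute, for each fixed $\theta$, the Fourier transform in $v$ of
$$\phi_{f,g}(\theta,v)=\int_\R\bigl(g'_*f'-g_*f\bigr)\,dv_*.$$
The term $\int_\R g_*f\,dv_*=\widehat g(0)f(v)$ contributes $\widehat g(0)\widehat f(\xi)$. For the term $\int_\R g'_*f'\,dv_*=\int_\R g(v\sin\theta+v_*\cos\theta)f(v\cos\theta-v_*\sin\theta)\,dv_*$ I would substitute $(a,b)=R_\theta(v,v_*)=(v',v_*')$, a rotation of $\R^2$ with Jacobian $1$ and inverse $v=a\cos\theta+b\sin\theta$; this factors the resulting double integral as
$$\Bigl(\int_\R f(a)e^{-ia\xi\cos\theta}\,da\Bigr)\Bigl(\int_\R g(b)e^{-ib\xi\sin\theta}\,db\Bigr)=\widehat f(\xi\cos\theta)\,\widehat g(\xi\sin\theta).$$
Hence $\mathcal F_v\bigl(\phi_{f,g}(\theta,\cdot)\bigr)(\xi)=\psi_{f,g}(\theta,\xi)$ with $\psi_{f,g}$ as in \eqref{6.knb55}, and taking even parts in $\theta$ (the Fourier transform in $v$ being linear and unaffected by $\theta\mapsto-\theta$) gives $\mathcal F_v\bigl(\breve\phi_{f,g}(\theta,\cdot)\bigr)(\xi)=\breve\psi_{f,g}(\theta,\xi)=\widehat{\breve g}(\xi\sin\theta)\widehat f(\xi\cos\theta)-\widehat g(0)\widehat f(\xi)$, since the even part of $\widehat g$ equals $\widehat{\breve g}$ and $\widehat{\breve g}(0)=\widehat g(0)$.

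Next I would justify interchanging $\int_\R(\cdot)e^{-iv\xi}\,dv$ with $\int_{|\theta|\le\pi/4}\beta(\theta)(\cdot)\,d\theta$ in the representation \eqref{6.kac02} of $K(g,f)$. The key is the quadratic vanishing of $\breve\phi_{f,g}$ at $\theta=0$: as in the estimate preceding Lemma~\ref{6.lem.defkac} (see also Lemma~\ref{new003}), $|\breve\phi_{f,g}(\theta,v)|\le\tfrac12\theta^2\sup_s|\partial_\theta^2\phi_{f,g}(s,v)|$, and $\sup_s|\partial_\theta^2\phi_{f,g}(s,\cdot)|$ is dominated by a fixed $G\in\mathscr S(\R)$ because $\partial_\theta^2\phi_{f,g}$ belongs to $\mathscr S(\R)$ uniformly in $\theta$; moreover $\theta^2\beta(\theta)\in L^1(-\tfrac\pi4,\tfrac\pi4)$ by \eqref{ah1} since $0<s<1$. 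Thus $|\beta(\theta)\breve\phi_{f,g}(\theta,v)e^{-iv\xi}|\le\tfrac12\theta^2|\beta(\theta)|\,G(v)\in L^1\bigl((-\tfrac\pi4,\tfrac\pi4)\times\R\bigr)$, and Fubini yields
$$\widehat{K(g,f)}(\xi)=\int_{|\theta|\le\frac\pi4}\beta(\theta)\,\mathcal F_v\bigl(\breve\phi_{f,g}(\theta,\cdot)\bigr)(\xi)\,d\theta=\int_{|\theta|\le\frac\pi4}\beta(\theta)\bigl[\widehat{\breve g}(\xi\sin\theta)\widehat f(\xi\cos\theta)-\widehat g(0)\widehat f(\xi)\bigr]\,d\theta,$$
which is the claim. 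Equivalently, one could prove the identity first with $\beta$ replaced by $\beta\,\un_{|\theta|\ge\varepsilon}$, where Fubini is immediate, and then let $\varepsilon\to0_+$, using $L^1(\R)$-convergence of the truncated operators on the left and Lemma~\ref{new003} applied to $\psi_{f,g}(\cdot,\xi)$ on the right.

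The only genuinely delicate point will be this last interchange, i.e. taming the non-integrable singularity of $\beta$ near $\theta=0$; everything else is a measure-preserving change of variables and a Fourier split. The precise input required — namely that $\breve\phi_{f,g}$ vanishes to second order in $\theta$ with Schwartz-in-$v$ control, uniformly in $\theta$ — is exactly what is recorded in the appendix just before Lemma~\ref{6.lem.defkac}, so no extra estimates are needed.
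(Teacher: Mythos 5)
Your proposal is correct and takes essentially the same route as the paper: the identity rests on the same two ingredients, namely the $L^1\bigl((-\frac{\pi}{4},\frac{\pi}{4})\times\R\bigr)$ domination of $\beta(\theta)\breve\phi_{f,g}(\theta,v)e^{-iv\xi}$ coming from the quadratic vanishing in $\theta$ (Lemma~\ref{new003} together with the uniform-in-$\theta$ Schwartz bounds on $\partial_\theta^2\phi_{f,g}$) and the measure-preserving rotation $(v,v_*)\mapsto(v',v_*')$ that factors the Fourier integral into $\widehat f(\xi\cos\theta)\,\widehat{\breve g}(\xi\sin\theta)$. The paper merely organizes the Fubini step through the $\varepsilon$-truncation and passage to the limit, which is exactly the alternative you mention at the end, so there is no substantive difference.
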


\bigskip

\begin{proof}
We deduce from Lemmas \ref{new003}, \ref{6.lem.defkac}, \eqref{6.evenp}, \eqref{6.fuphi} and (\ref{6.kac02}) that
\begin{align}\label{6.knbgg}
\widehat{K(g,f)}(\xi)= \widehat{K(\breve{g},f)}(\xi)= & \ \iint_{[-\frac\pi4,\frac{\pi}4]\times \R}\underbrace{\beta(\theta)\phi_{f,\breve g}(\theta, v)e^{-iv\xi}}_{\in L^1([-\frac\pi4,\frac{\pi}4]\times \R)}
 d\theta dv\\
 =& \ \lim_{\varepsilon\rightarrow 0}\iint_{\{\varepsilon\le \val \theta\le \frac{\pi}{4}\}\times \R}\beta(\theta)\phi_{f,\breve g}(\theta, v)e^{-iv\xi}d\theta dv.  \notag
\end{align}
We consider
$$\mathcal I_{\varepsilon}=\iiint_{\{\eps \leq |\theta| \leq \frac{\pi}{4}\}\times\RR^2}\beta(\theta)\breve g(v\sin\theta +v_*\cos\theta) f(v\cos\theta - v_*\sin\theta)e^{-iv\xi}d\theta dvdv_*.$$
By using the substitution rule with the new variables $x=v\cos\theta - v_*\sin\theta,\ y=v\sin\theta +v_*\cos\theta,$ we obtain that
\begin{align*}
\mathcal I_{\varepsilon}&=  \iiint_{\{\eps \leq |\theta| \leq \frac{\pi}{4}\}\times\RR^2}\beta(\theta)f(x)\breve g(y) e^{-i(x \cos \theta+y\sin \theta)\xi}d\theta dxdy\\
&=  \ \int_{\eps \leq |\theta| \leq \frac{\pi}{4}}\beta(\theta)\widehat{f}(\xi \cos \theta)\widehat{\breve g}(\xi \sin \theta)d\theta.
\end{align*}
Since
$$\iiint_{\{\eps \leq |\theta| \leq \frac{\pi}{4}\}\times\RR^2}\beta(\theta)\breve g(v_*) f(v)e^{-iv\xi}d\theta dvdv_*=\hat g(0)\hat f(\xi)\int_{\{\eps \leq |\theta| \leq \frac{\pi}{4}\}}\beta(\theta) d\theta,$$
we get that 
\begin{multline*}
\iint_{\{\varepsilon\le \val \theta\le \frac{\pi}{4}\}\times\R}\hspace{-0.5cm}\beta(\theta)\phi_{f,\breve g}(\theta, v)e^{-iv\xi} d\theta dv=
 \int_{\{\eps \leq |\theta| \leq \frac{\pi}{4}\}}\hspace{-0.5cm}\beta(\theta) \bigl(\widehat{f}(\xi \cos \theta)\widehat{\breve g}(\xi \sin \theta)-\hat g(0)\hat f(\xi) \bigr)d\theta.
 \end{multline*}
Lemma~\ref{prop2} follows from \eqref{6.knb55}, \eqref{6.knbgg} and Lemma~\ref{new003}.
\end{proof}

\subsection{Miscellanea}

\subsubsection{The harmonic oscillator}\label{6.sec.harmo}
 The standard Hermite functions $(\phi_{n})_{n\in \N}$ are defined for $x \in \rr$,
 \begin{multline}
 \phi_{n}(x)=\frac{(-1)^n}{\sqrt{2^n n!\sqrt{\pi}}} e^{\frac{x^2}{2}}\frac{d^n}{dx^n}(e^{-x^2})
 =\frac{1}{\sqrt{2^n n!\sqrt{\pi}}} \Bigl(x-\frac{d}{dx}\Bigr)^n(e^{-\frac{x^2}{2}})=\frac{ a_{+}^n \phi_{0}}{\sqrt{n!}},
\end{multline}
where $a_{+}$ is the creation operator 
$$a_{+}=\frac{1}{\sqrt{2}}\Big(x-\frac{d}{dx}\Big).$$
The family $(\phi_{n})_{n\in \N}$ is an orthonormal basis of $L^2(\R)$.
We set for $n\in \N$, $\alpha=(\alpha_{j})_{1\le j\le d}\in\N^d$, $x\in \R$, $v\in \R^d,$
\begin{align}\label{}
\psi_n(x)&=2^{-1/4}\phi_n(2^{-1/2}x),\quad \psi_{n}=\frac{1}{\sqrt{n!}}\Bigl(\frac{x}2-\frac{d}{dx}\Bigr)^n\psi_{0},
\\
\Psi_{\alpha}(v)&=\prod_{j=1}^d\psi_{\alpha_j}(v_j),\quad \mathcal E_{k}=\text{Span}
\{\Psi_{\alpha}\}_{\alpha\in \N^d,\val \alpha=k},
\end{align}
with $\val \alpha=\alpha_{1}+\dots+\alpha_{d}$. The family $(\Psi_{\alpha})_{\alpha \in \nn^d}$ is an orthonormal basis of $L^2(\R^d)$
composed by the eigenfunctions of the $d$-dimensional harmonic oscillator
\begin{equation}\label{6.harmo}
\mathcal{H}=-\Delta_v+\frac{|v|^2}{4}=\sum_{k\ge 0}\Big(\frac d2+k\Big)\mathbb P_{k},\quad \text{Id}=\sum_{k \ge 0}\mathbb P_{k},
\end{equation}
where $\mathbb P_{k}$ is the orthogonal projection onto $\mathcal E_{k}$ whose dimension is $\binom{k+d-1}{d-1}$. The eigenvalue
$d/2$ is simple in all dimensions and $\mathcal E_{0}$ is generated by the function
$$\Psi_{0}(v)=\frac{1}{(2\pi)^{\frac{d}{4}}}e^{-\frac{\val v^2}{4}}=\mu^{1/2}(v),$$
where $\mu$ is the Maxwellian distribution defined in (\ref{maxwe}).

\subsubsection{An asymptotic equivalent}
We consider the integral
\begin{equation}\label{eig00}
\lambda'_{k}=\int_{\val \theta\le \frac{\pi}{4}}\beta(\theta)\bigl(1-(\cos \theta)^k\bigr)d\theta,\quad k\in \N,
\end{equation}
where $\beta$ is the function defined in \eqref{new001'}. We want to prove the following asymptotic equivalent for the integral $\lambda'_{k}$ when $k \to +\io$,
\begin{equation}\label{6.asyvp}
\lambda'_{k}\sim c_{0} k^s\quad\text{with\quad} c_{0}=2^{1+s}\int_{0}^{+\io}{(1-e^{-w})}
\frac{dw}{w^{s+1}}=\frac{2^{1+s}}{s}\Gamma(1-s).
\end{equation}
To that end, we use the substitution rule with $v=2\sin^2(\frac{\theta}{2})$ to obtain that
$$\lambda'_{k}=2^{1+s}\int_{0}^{1-2^{-1/2}}\bigl(1-(1-v)^k\bigr)\frac{dv}{v^{1+s}}=2^{1+s} k^{s}\int_{0}^{k(1-2^{-1/2})}\Bigl(1-\Big(1-\frac w k\Big)^k\Bigr) \frac{dw}{w^{1+s}}.
$$
Furthermore, we have for any $w\in (0,k)$ with $k\ge 1$,
\begin{multline*}
0\le \Bigl(1-\Big(1-\frac w k\Big)^k\Bigr) \frac{1}{w^{1+s}}\le  k \frac{w}{k}\frac{1}{w^{1+s}}\un_{[0,1]}(w)
+\un_{(1,+\io)}(w)\frac{1}{w^{1+s}}\\=\frac{1}{w^{s}}\un_{[0,1]}(w)+\un_{(1,+\io)}(w)\frac{1}{w^{1+s}}\in L^1(\R).
\end{multline*}
It follows from the Lebesgue dominated convergence theorem that
$$\lim_{k\rightarrow+\io} \frac{\lambda'_{k}}{k^{s}}=2^{1+s}\int_{0}^{+\io}\bigl(1-e^{-w}\bigr)\frac{dw}{ w^{1+s}}.$$
We shall now estimate from above the term
\begin{equation}\label{eig01}
\lambda''_{l}=\int_{\val \theta\le \frac{\pi}{4}}\beta(\theta)(\sin\theta)^{2l}
d\theta,\quad l\ge 1.
\end{equation}
We have
\begin{align*}
0\le \lambda''_{l}= & \ 2^{2+2s}\int_{0}^{\frac{\pi}{4}}\frac{\theta^{1+2s}\cos(\frac{\theta}{2})}{2^{1+2s}\sin^{1+2s}(\frac{\theta}{2})}\Big(\frac{\sin\theta}{\theta}\Big)^{2l}\theta^{2l-1-2s}d\theta\\
\le & \  2^{2+2s}\Big(\frac{\pi}{2}\Big)^{1+2s}\int_{0}^{\frac{\pi}{4}}\theta^{2l-1-2s}d\theta= \Big(\frac{\pi}{4}\Big)^{2l-2s}\frac{\pi^{1+2s}}{l-s}\\
\le & \ \frac{4^{2s}\pi }{1-s}\exp{-2l\Big(\log{\frac4\pi}\Big)},
\end{align*}
so that $\lambda''_{l}$ is exponentially decreasing when $l \to +\infty$,
\begin{equation}\label{eig02}
0\le \lambda''_{l}\le \frac{4^{2s}\pi }{1-s}
\exp{-2l\Big(\log{\frac4\pi}\Big)},\quad l\ge 1,\ 0<s<1.
\end{equation}
These estimates prove \eqref{new006}.

\subsubsection{On the Weyl quantization}\label{6.sec.susub}
Let $a$ be a tempered distribution on $\R^d_{v}\times \R^d_{\xi}$. The symbol $a$ may be Weyl quantized in an operator $a^w$ sending $\mathscr S (\R^d)$ into $\mathscr S'(\R^d)$. The formula  \eqref{2.weylq}
is not readily meaningful, but a weak formulation is provided as follows.
We consider the Wigner function of two functions $f,g \in \mathscr S(\R^d)$,
\begin{equation}\label{6.wigner}
(\mathscr W(f,g))(v,\xi)=\frac{1}{(2\pi)^{d}}\int_{\R^{d}} f\Big(v+\frac y2\Big)\overline{g\Big(v-\frac y2\Big)}e^{-iy\cdot\xi}dy.
\end{equation}
We easily check that $ \mathscr W(f,g)$ belongs to $\mathscr S(\R^{2d})$. For $a\in \mathscr S'(\R^{2d})$, we define
\begin{equation}\label{4.kn221}
\poscal{a^w f}{g}_{\mathscr S'(\R^d),\mathscr S(\R^d)}=\poscal{a}{ \mathscr W(f,g)}
_{\mathscr S'(\R^{2d}),\mathscr S(\R^{2d})}.
\end{equation}
The standard formula \eqref{2.weylq} follows from this weak formulation. A nice feature of the Weyl quantization is the fact that
$$(a^w)^*=(\bar a)^w,$$
where $\bar a$ stands for the complex conjugate symbol of $a$. Real-valued symbols are therefore Weyl quantized as formally selfadjoint operators. The distribu\-tion-kernel of the operator $a^w$ is given by
$$k(v,v')=\frac{1}{(2\pi)^{d}}\int_{\rr^d} a\Big(\frac{v+v'}{2},\xi\Big)e^{i(v-v')\cdot \xi}d\xi,$$
where the integral is understood as a partial Fourier transform. Conversely, we deduce from the previous formula that
\begin{equation}\label{4.lk555}
a(v,\xi)=\int_{\rr^d} k\Big(v-\frac{y}{2},v+\frac{y}{2}\Big)e^{i y \cdot \xi}dy,
\end{equation}
where the integral is understood as a partial inverse Fourier transform of the distribution kernel.
A computation in the proofs above has to deal  with the relationship between the  distribution kernel $k(v,y)\in \mathscr S'(\R^d\times \R^d)$ of an operator $A$ and the distribution kernel of the operator $\tilde A$ given by
$$(\tilde A u)(v)= (A\breve u)(v),$$
where $\breve u$ stands for the even part of $u$. An easy computation shows that
\begin{equation}\label{6.kern0}
\tilde k(v,y)=\frac12\bigl(k(v,y)+k(v,-y)\bigr),
\end{equation}
where $\tilde k$ stands for the kernel of $\tilde A$. The formula \eqref{4.lk555} implies that the Weyl symbol of the operator $\tilde A$ is
\begin{align*}
\tilde a(v,\xi)= & \ \frac{1}{2}\int e^{iy \cdot \xi}\Big[k\Bigl(v-\frac y2,v+\frac y2\Bigr)+ k\Bigl(v-\frac y2,-v-\frac y2\Bigr)\Big]dy\\
= & \ \int e^{iy \cdot \xi} \breve k^{\{2\}}\Bigl(v-\frac y2,v+\frac y2\Bigr) dy,
 \end{align*}
 where $ \breve k^{\{2\}}$ stands for the even part of the function $k$ with respect to its second variable.

\subsubsection{On radial functions}\label{6.sec.radia}
If $u\in \mathscr S(\R^d)$ is a radial function
$$\forall x\in\rr^d,\,\,\forall A\in O(d),\  u(x)=u(Ax),$$ 
we define 
$$\tilde u(t)=\frac{1}{\val{\SSS^{d-1}}}\int_{\SSS^{d-1}} u(t\sigma) d\sigma, \quad t\in \R.$$
This function is even, belongs to the Schwartz space $\mathscr S(\R)$ and satisfies 
$$\forall t\in \R, \forall \sigma\in \SSS^{d-1}, \quad  \tilde{u}(t)=u(t\sigma), \quad \forall x\in \R^d, \quad  u(x)=\tilde{u}(\val x).$$
Borel's theorem shows that the mapping $t\mapsto \tilde u(t)$ is also a Schwartz function of the variable $t^2$. We also recall that the Fourier transform of a radial function is radial and that the Fourier transformation is an isomorphism of the space $\mathscr S_{r}(\R^d)$.

\vs\noindent
{\bf Acknowledgements.}
The research of the second author was supported by the Grant-in-Aid for Scientific Research No.22540187, Japan Society of the Promotion of Science. The research of the third author was supported by the chair of excellence CNRS of the Universit\'e de Cergy-Pontoise. The research of the last   author was supported partially by ``The Fundamental Research Funds for Central Universities''. The authors are grateful to Kyoto University and Wuhan University for their kind hospitality and support.

\end{document}